\newcommand{\altmanifold}{\mathscr{N}}
\newcommand{\altaltsurface}{\mathcal{R}}
\newcommand{\alttree}{T}
\newcommand{\compbody}{\mathcal{C}}
\newcommand{\decomp}{\mathscr{D}}  
\newcommand{\dual}{\Gamma}
\newcommand{\fork}{F}
\newcommand{\forkcomp}{\mathscr{F}}
\newcommand{\fforkcomp}{\forkcomp_{\circ}}
\newcommand{\nfforkcomp}{\forkcomp_{\color{gray}\bullet}}
\newcommand{\grip}{\gamma}
\newcommand{\hbody}{\mathcal{H}}
\newcommand{\heegaard}{\mathscr{H}}
\newcommand{\jsjdecomp}{\mathscr{D}}
\newcommand{\manifold}{\mathscr{M}}
\newcommand{\maxdeg}{\Delta}
\renewcommand{\root}{\rho}
\newcommand{\surface}{\mathcal{S}}
\newcommand{\altsurface}{\mathbb{S}}
\newcommand{\sweepout}{\Sigma}
\newcommand{\tine}{\tau}
\newcommand{\torus}{\mathbb{T}}
\newcommand{\tri}{\mathscr{T}}
\newcommand{\width}{w}
\newcommand{\heeg}[1]{\mathfrak{g} \left(#1\right)}
\newcommand{\nsphere}[1]{\mathbb{S}^{#1}}
\newcommand{\poly}[1]{\operatorname{poly} (#1)}
\newcommand{\pw}[1]{\operatorname{pw} (#1)}
\newcommand{\tw}[1]{\operatorname{tw} (#1)}
\newcommand\MyFigScale{0.8205128205}
\newcommand\MyFigScaleHuge{0.45128205128}
\newcommand{\myref}[1]{{\color{lipicsGray}\sffamily\bfseries\ref{#1}}}
\definecolor{BrickRed}{rgb}{0.714,0.196,0.11}
\definecolor{Blue}{rgb}{0.176,0.184,0.573}
\newcommand\simtimes{\mathbin{%
    \stackrel{\sim}{\smash{\times}\rule{0pt}{0.9ex}}%
    }}
\theoremstyle{remark}
\newtheorem{question}{Question}
\title{On the Width of Complicated JSJ Decompositions}
\titlerunning{On the Width of Complicated JSJ Decompositions}
\author{Krist\'of Husz\'ar}{Univ Lyon, CNRS, ENS de Lyon, Université Claude Bernard Lyon 1, LIP UMR5668, France \and \url{https://kristofhuszar.github.io/}}{kristof.huszar@ens-lyon.fr}{https://orcid.org/0000-0002-5445-5057}{Supported by the French National Research Agency through the 3IA C\^ote d'Azur (ANR-19-P3IA-0002), AlgoKnot (ANR-20-CE48-0007), GrR (ANR-18-CE40-0032), and TWIN-WIDTH (ANR-21-CE48-0014) projects. A previous version of this manuscript was written during the author's visit at Institut Henri Poincaré (IHP). The author acknowledges the hospitality of IHP (UAR 839 CNRS--Sorbonne Universit\'e) and LabEx CARMIN (ANR-10-LABX-59-01).}
\author{Jonathan Spreer}{School of Mathematics and Statistics F07, The University of Sydney, NSW 2006 Australia \and \url{https://sites.google.com/view/jonathan-spreer/home}}{jonathan.spreer@sydney.edu.au}{https://orcid.org/0000-0001-6865-9483}{Supported by the Australian Research Council's Discovery funding scheme (project no.\ DP220102588).}
\authorrunning{K.\ Husz\'ar and J.\ Spreer}
\keywords{computational 3-manifold topology, fixed-parameter tractability, generalized Heegaard splittings, JSJ decompositions, pathwidth, treewidth, triangulations}
\begin{document}

\maketitle

\begin{abstract}
Motivated by the algorithmic study of 3-dimensional manifolds, we explore the structural relationship between the JSJ decomposition of a given 3-manifold and its triangulations. Building on work of Bachman, Derby-Talbot and Sedgwick, we show that a ``sufficiently complicated'' JSJ decomposition of a 3-manifold enforces a ``complicated structure'' for all of its triangulations. More concretely, we show that, under certain conditions, the treewidth (resp.\ pathwidth) of the graph that captures the incidences between the pieces of the JSJ decomposition of an irreducible, closed, orientable 3-manifold $\manifold$ yields a linear lower bound on its treewidth $\tw{\manifold}$  (resp.\ pathwidth $\pw{\manifold}$), defined as the smallest treewidth (resp.\ pathwidth) of the dual graph of any triangulation of $\manifold$.

We present several applications of this result. We give the first example of an infinite family of bounded-treewidth 3-mani\-folds with unbounded pathwidth. We construct Haken 3-mani\-folds with arbitrarily large treewidth---previously the existence of such 3-manifolds was only known in the non-Haken case. We also show that the problem of providing a constant-factor approximation for the treewidth (resp.\ pathwidth) of bounded-degree graphs efficiently reduces to computing a constant-factor approximation for the treewidth (resp.\ pathwidth) of 3-manifolds. 
\end{abstract}

\section{Introduction}
\label{sec:intro}

Manifolds in geometric topology are often studied through the following two-step process. Given a piecewise linear $d$-dimensional manifold $\manifold$, first find a ``suitable'' triangulation of it, i.e., a decomposition of $\manifold$ into $d$-simplices with ``good'' combinatorial properties. Then apply algorithms on this triangulation to reveal topological information about $\manifold$.

The work presented in this article is motivated by this process in dimension $d=3$. Here every manifold can be triangulated \cite{moise1952affine} and questions about them typically admit algorithmic solutions \cite{kuperberg2019algorithmic, lackenby2020algorithms, scott2014homeomorphism}.\footnote{In higher dimensions none of these statements is true in general. See, e.g., \cite{manolescu2016lectures}, \cite{markov1958homeo} or \cite[Section 7]{poonen2014undecidable}.} At the same time, the feasibility of a particular computation can greatly depend on structural properties of the triangulation in use.
Over the past decade, this phenomenon was recognized and exploited in various settings, leading to \emph{fixed-parameter tractable} (FPT) algorithms for several problems in low-dimensional topology, some of which are even known to be \textbf{NP}-hard \cite{burton2017courcelle, burton2016parameterized, burton2018algorithms, pettersson2014fixed, burton2013complexity}.\footnote{See \cite{bagchi2016efficient} for an FPT algorithm checking tightness of (weak) pseudomanifolds in arbitrary dimensions.} Although these algorithms have exponential running time in the worst case, for input triangulations with \emph{dual graph} of bounded \emph{treewidth} they always terminate in polynomial (in most cases, linear) time.\footnote{The running times are given in terms of the \emph{size} of the input triangulation, i.e., its number of tetrahedra.} Moreover, some of them have implementations that are highly effective in practice, providing useful tools for researchers in low-dimensional topology \cite{burton2013regina, Regina}.

The theoretical efficiency of the aforementioned FPT algorithms crucially depends on the assumption that the dual graph of the input triangulation has small treewidth. To understand their scope, it is thus instructive to consider the \emph{treewidth $\tw{\manifold}$ of a compact $3$-manifold $\manifold$}, defined as the smallest treewidth of the dual graph of any triangulation of $\manifold$. Indeed, the relation between the treewidth and other quantities associated with 3-manifolds has recently been investigated in various contexts \cite{huszar2020combinatorial,huszar2022pathwidth,huszar2019manifold,huszar2019treewidth,maria2019treewidth}. For instance, in \cite{huszar2019treewidth} together with Wagner we have shown that the treewidth of a \emph{non-Haken} 3-manifold is always bounded below in terms of its \emph{Heegaard genus}. Combined with earlier work of Agol \cite{agol2003small}---who constructed non-Haken 3-manifolds with arbitrary large Heegaard genus---this implies the existence of 3-manifolds with arbitrary large treewidth. Despite the fact that, asymptotically, most triangulations of most 3-manifolds must have dual graph of large treewidth \cite[Appendix A]{huszar2019treewidth}, this collection described by Agol has remained, to this date, the only known family of 3-manifolds with arbitrary large treewidth.

\subparagraph*{The main result} In this work we unravel new structural links between the triangulations of a given 3-manifold and its \emph{JSJ decomposition} \cite{jaco1978decomposition,jaco1979seifert,johannson1979homotopy}. Employing the machinery of \emph{generalized Heegaard splittings} \cite{scharlemann2016lecture}, the results developed in \cite{huszar2019treewidth}, and building on the work of Bachman, Derby-Talbot and Sedgwick \cite{bachman2016heegaard,bachman2017computing}, we show that, under suitable conditions, the dual graph of any triangulation of a given 3-manifold $\manifold$ inherits structural properties from the \emph{decomposition graph} that encodes the incidences between the pieces of the JSJ decomposition of $\manifold$. More precisely, in \Cref{sec:main} we prove the following theorem.

\begin{theorem}[Width inheritance]\label{thm:jsj-width}
For any closed, orientable and irreducible $3$-manifold $\manifold$ with \emph{sufficiently complicated}\footnote{The notion of ``sufficiently complicated'' under which we establish Theorem \ref{thm:jsj-width} is discussed in \Cref{sec:main}.} torus gluings in its JSJ decomposition $\decomp$, the treewidth and pathwidth of $\manifold$ and that of the decomposition graph $\dual(\decomp)$ of $\decomp$ satisfy
\vspace{-.5\topsep}

\noindent
\begin{minipage}[b]{.425\textwidth}
\begin{equation}\label{eq:jsj-tw}
	\tw{\dual(\decomp)} \leq 18\cdot(\tw{\manifold}+1)
\end{equation}
\end{minipage}\hfill
\begin{minipage}[b]{.1\textwidth}
\centering
\quad and
\end{minipage}\hfill
\begin{minipage}[b]{.425\textwidth}
\begin{equation}\label{eq:jsj-pw}
	 \pw{\dual(\decomp)} \leq 4\cdot(3\pw{\manifold}+1).
\end{equation}
\end{minipage}
\end{theorem}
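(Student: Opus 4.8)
The plan is to bound $\tw{\dual(\decomp)}$ (resp.\ $\pw{\dual(\decomp)}$) from above using the treewidth (resp.\ pathwidth) of $\manifold$ by passing through a \emph{generalized Heegaard splitting} adapted to the JSJ decomposition. Starting from an arbitrary triangulation $\tri$ of $\manifold$ whose dual graph has treewidth $\tw{\manifold}$, I would first invoke the machinery from \cite{huszar2019treewidth} to convert a tree decomposition of $\graph{\tri}$ into a generalized Heegaard splitting $\sweepout$ of $\manifold$ whose ``width'' (the genera of the splitting surfaces, together with the combinatorial structure of the associated tree) is linearly controlled by $\tw{\manifold}$; an analogous statement with a linear pathwidth bound holds when one starts from a path decomposition. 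The point of the ``sufficiently complicated'' hypothesis on the JSJ tori is exactly what lets us apply the results of Bachman, Derby-Talbot and Sedgwick \cite{bachman2016heegaard,bachman2017computing}: after isotopy and amalgamation/untelescoping moves, the incompressible surfaces appearing in the thin part of $\sweepout$ can be arranged so that each JSJ torus is isotopic to a component of a thin level surface. Consequently the JSJ tori are realized \emph{simultaneously and disjointly} inside the generalized Heegaard splitting.

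Next I would extract the decomposition graph $\dual(\decomp)$ from this picture. Once the JSJ tori sit as disjoint thin level components, cutting $\manifold$ along them splits the underlying tree of the generalized Heegaard splitting into subtrees, one per JSJ piece, and two pieces are adjacent in $\dual(\decomp)$ precisely when the corresponding subtrees are joined across one such torus. This yields a graph minor / topological-minor type relationship between $\dual(\decomp)$ and the tree underlying $\sweepout$, decorated by the bounded-genus splitting surfaces. Because contracting each subtree to a vertex can only decrease treewidth, and the genus bound controls how much ``thickness'' each contracted vertex carries, one gets $\tw{\dual(\decomp)} \le c_1 \cdot (\tw{\manifold}+1)$ and $\pw{\dual(\decomp)} \le c_2 \cdot (3\pw{\manifold}+1)$ for explicit constants; tracking the constants carefully through the genus-to-treewidth conversions of \cite{huszar2019treewidth} (where a splitting surface of genus $g$ contributes roughly $3g$ to treewidth, and the factor $18$ absorbs the amalgamation overhead) and through the minor relationship should give exactly the constants $18$ and $4$ (with the internal factor $3$ on $\pw{\manifold}$) stated in \eqref{eq:jsj-tw} and \eqref{eq:jsj-pw}.

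The main obstacle, I expect, is the middle step: ensuring that the JSJ tori can genuinely be isotoped to lie as \emph{disjoint} components of thin level surfaces of a generalized Heegaard splitting whose width is still linearly bounded in $\tw{\manifold}$. A priori, making one incompressible surface thin is fine, but doing this \emph{for all JSJ tori at once}, without blowing up the genus of the remaining splitting surfaces, is exactly where the ``sufficiently complicated gluings'' hypothesis and the Bachman--Derby-Talbot--Sedgwick results must be used in an essential way: their work shows that sufficiently complicated gluings force any Heegaard (or generalized Heegaard) surface to be aligned with the JSJ tori, so no width is lost. A secondary technical point is bookkeeping for non-generic intersections and for JSJ pieces that are Seifert fibered versus hyperbolic (atoroidal), but these do not affect the asymptotics. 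Once the alignment is in place, deriving the width inequalities is a matter of the graph-minor estimate plus the arithmetic of the conversion constants.
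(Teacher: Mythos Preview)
Your high-level strategy matches the paper's: triangulation $\leadsto$ generalized Heegaard splitting of bounded width $\leadsto$ align the JSJ tori with thin levels $\leadsto$ read off a tree decomposition of $\dual(\decomp)$. Two points, however, need correction.

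\textbf{The extraction step is not a minor relation.} You write that cutting along the JSJ tori ``splits the underlying tree of the generalized Heegaard splitting into subtrees, one per JSJ piece'' and that ``contracting each subtree to a vertex can only decrease treewidth.'' If that were literally true, $\dual(\decomp)$ would be a minor of a tree, hence itself a forest, which it is not in general. What actually happens is that the regions of the sweep-out tree $\nfforkcomp$ occupied by distinct JSJ pieces \emph{overlap}; at a point $x$ where a level set $\Sigma_x$ has many components, several JSJ pieces are simultaneously present. The paper therefore does \emph{not} contract subtrees; it builds a tree decomposition of $\dual(\decomp)$ whose underlying tree is (a subdivision of) $\nfforkcomp$, with the bag at $x$ consisting of the JSJ pieces meeting $\Sigma_x$. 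The width bound then comes from a counting argument: since $\manifold$ is irreducible one may assume no level-set component is a sphere, so a level set of genus at most $g$ has at most $g$ components, hence meets at most $g+1$ JSJ pieces (the ``$+1$'' when a JSJ torus itself sits in that level). Your ``decorated tree'' language gestures at this, but the minor formulation would not actually prove the bound.

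\textbf{The role of strong irreducibility and the constants.} The ``untelescoping'' you mention is precisely the \emph{weak reduction} step, and it is not optional bookkeeping: one must first pass to a strongly irreducible generalized Heegaard splitting so that (i) thin levels are incompressible (Scharlemann--Thompson) and (ii) thick levels are strongly irreducible; only then does the $\delta$-complicated hypothesis (\Cref{def:complicated-jsj}) apply to \emph{both} kinds of surfaces, allowing all JSJ tori to be isotoped off them and then, via \cite[Corollary~4.5]{bachman2016heegaard}, onto thin-level components. Finally, your accounting of the constants is inverted: the $18$ (and the $4\cdot 3$ for pathwidth) come directly from the triangulation-to-splitting conversion in \cite{huszar2019treewidth}, which produces level surfaces of genus at most $18(\tw{\manifold}+1)$ (resp.\ $4(3\pw{\manifold}+1)$); there is no further ``amalgamation overhead,'' and no ``$3g$ genus-to-treewidth'' factor in this direction.
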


\subparagraph*{An algorithmic construction} Much work in 3-dimensional topology has been devoted to the study of 3-manifolds constructed by pasting together simpler pieces along their boundary surfaces via ``sufficiently complicated'' gluing maps, and to understand how different decompositions of the same 3-manifold interact under various conditions, see, e.g., \cite{bachman2013stabilizing, bachman2016heegaard, bachman2017computing, bachman2006sweepouts, lackenby2004heegaard, li2010heegaard, scharlemann2001comparing, schultens2007destabilizing}.
\Cref{thm:jsj-width} allows us to leverage these results to construct 3-manifolds, where we have tight control over the treewidth and pathwidth of their triangulations \cite{huszar2020combinatorial}.

By combining \Cref{thm:jsj-width} and \cite[Theorem 5.4]{bachman2013stabilizing} (cf.\ \cite[Appendix]{bachman2017computing}), in \Cref{sec:const} we prove the following result.

\begingroup

\begin{theorem}\label{thm:construction}
There is a polynomial-time algorithm that, given an $n$-node graph $G$ with maximum node-degree $\maxdeg$, produces a triangulation $\tri_G$ of a closed $3$-manifold $\manifold_G$, such that
\begin{enumerate}
	\item the triangulation $\tri_G$ contains $O_\maxdeg(\pw{G}\cdot n)$ tetrahedra,\footnote{Similar to the standard \emph{big-O notation}, $O_\maxdeg(x)$ means ``a quantity bounded above by $x$ times a constant depending on $\Delta$.'' To ensure that \myref{eq:tw-inherit} is satisfied, but not necessarily \myref{eq:pw-inherit}, $O_\maxdeg(\tw{G}\cdot n)$ tetrahedra suffice.} \label{thm:construction-size}
	\item the JSJ decomposition $\mathscr{D}$ of $\manifold_G$ satisfies $\dual(\mathscr{D}) = G$, and  \label{thm:construction-jsj}
	\item there exist universal constants $c,c'>0$, such that
	\begin{enumerate}
		\item $(c / \maxdeg) \tw{\manifold_G} \leq \tw{G} \leq 18\cdot(\tw{\manifold_G}+1)$, and \label{eq:tw-inherit}
		\item $(c' / \maxdeg) \pw{\manifold_G} \leq \pw{G} \leq 4\cdot(3\pw{\manifold_G}+1)$. \label{eq:pw-inherit}
	\end{enumerate}
\end{enumerate}
\end{theorem}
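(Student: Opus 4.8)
The plan is to assemble the manifold $\manifold_G$ from the combinatorial data of $G$ by a direct, local-to-global construction, then invoke \Cref{thm:jsj-width} for the upper bounds in item~\ref{thm:construction-jsj-inherit}. Concretely, I would first replace $G$ by a bounded-degree surrogate that still records its width: fix a spanning structure and, for each node $v$ of $G$, take a single Seifert-fibered piece (say, a trivial circle bundle over a planar surface with $\deg(v)+1$ boundary tori, or a suitable small hyperbolic piece with the right number of cusps) whose boundary tori are in bijection with the edges incident to $v$ plus one ``extra'' distinguished torus if needed for irreducibility. The JSJ pieces are glued in pairs, one gluing per edge $e=uv$ of $G$, along the corresponding boundary tori. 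To make the theorem's hypothesis hold, every gluing map is chosen to be ``sufficiently complicated'' in the sense of \Cref{sec:main}; since there are only finitely many pieces up to the combinatorial type and the condition is open/generic on the mapping class group of the torus, a polynomial-size description (e.g.\ a power of a fixed pseudo-Anosov, with exponent a fixed constant) suffices, giving item~\ref{thm:construction-size}. One must also verify the assembled $\manifold_G$ is closed, orientable, and irreducible, and that no accidental compressions or annuli reduce the decomposition---this is where ``sufficiently complicated'' does the work, essentially by \cite[Theorem 5.4]{bachman2013stabilizing} (cf.\ \cite[Appendix]{bachman2017computing}), which also guarantees that the JSJ decomposition of the result is exactly the one we built, so $\dual(\mathscr{D})=G$, i.e.\ item~\ref{thm:construction-jsj}.

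For the size bound~\ref{thm:construction-size}, I would not triangulate each piece in isolation and glue (gluings along a torus are cheap but the pieces themselves could be large); instead I would build a single triangulation whose dual graph ``follows'' a path (resp.\ tree) decomposition of $G$. Take an optimal path decomposition of $G$ of width $\pw{G}$; it has $O(n)$ bags each of size $\pw{G}+1$. Lay out the Seifert pieces and the gluing regions along this path, triangulating the ``slice'' corresponding to each bag with $O_\maxdeg(\pw{G})$ tetrahedra, so that consecutive slices share a triangulated surface of bounded genus. Summing over the $O(n)$ bags gives $O_\maxdeg(\pw{G}\cdot n)$ tetrahedra; using a tree decomposition instead yields $O_\maxdeg(\tw{G}\cdot n)$ tetrahedra and still realizes $G$ as the decomposition graph, but only the treewidth inequality~\ref{eq:tw-inherit} is then guaranteed, matching the footnote.

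The lower bounds $(c/\maxdeg)\tw{\manifold_G}\le \tw{G}$ and $(c'/\maxdeg)\pw{\manifold_G}\le\pw{G}$ in~\ref{eq:tw-inherit}--\ref{eq:pw-inherit} come for free from the explicit triangulation $\tri_G$: its dual graph is obtained from (a subdivision of) $G$ by replacing each node and each gluing region by a bounded-size gadget, so the dual graph has treewidth (resp.\ pathwidth) at most a $\maxdeg$-dependent constant times that of $G$; since $\tw{\manifold_G}$ (resp.\ $\pw{\manifold_G}$) is the minimum over all triangulations, it is bounded above by the width of this particular dual graph. The reverse inequalities in~\ref{eq:tw-inherit}--\ref{eq:pw-inherit} are precisely \eqref{eq:jsj-tw} and \eqref{eq:jsj-pw} of \Cref{thm:jsj-width}, applied with $\dual(\decomp)=G$.

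The main obstacle I anticipate is the interface between the soft topological input (\cite[Theorem 5.4]{bachman2013stabilizing}) and the hard combinatorial bookkeeping: one must choose the building-block pieces and the complicated gluings so that simultaneously (i) the hypotheses of \Cref{thm:jsj-width} are met, (ii) the JSJ graph is \emph{exactly} $G$ with no extra pieces from surgered-in tori or hidden $S^1\times S^2$ summands, and (iii) the whole manifold still admits the efficient, width-respecting triangulation promised in~\ref{thm:construction-size}. Reconciling (ii) and (iii)---a controlled global triangulation of a manifold whose pieces were specified only up to ``generic'' gluing---is the delicate point; I expect to handle it by fixing, once and for all, a finite library of explicitly triangulated Seifert pieces with prescribed boundary patterns and a fixed explicitly triangulated ``complicated gluing collar'' (a triangulated $T^2\times[0,1]$ realizing a high power of a standard torus automorphism), and then certifying ``sufficiently complicated'' by taking a large enough fixed power, so that the construction is uniform in $G$ and polynomial-time.
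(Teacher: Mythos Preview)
Your overall architecture is right---build blocks for each node, glue along tori for each arc, invoke \Cref{thm:jsj-width} for the upper bounds, and read the lower bounds off the dual graph of the explicit triangulation---and this matches the paper. But there is a genuine gap in how you treat the gluing maps, and it propagates into your size argument.

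The ``sufficiently complicated'' hypothesis of \Cref{thm:jsj-width} is \emph{quantitative}: one needs the JSJ decomposition to be $\delta$-complicated for $\delta=18(\tw{\manifold_G}+1)$ (resp.\ $4(3\pw{\manifold_G}+1)$), and via \cite[Theorem 5.4]{bachman2013stabilizing} this requires the torus gluings to have distance at least $K_{\maxdeg}\cdot\delta$. Since a priori you only control $\manifold_G$ through $G$, the paper takes $\delta=\max\{18(\tw{G}+1),\,4(3\pw{G}+1)\}=O(\pw{G})$. Hence the gluing maps \emph{cannot} be realized by a ``fixed constant'' power of a torus automorphism; the exponent must grow linearly with $\pw{G}$. (Also, there are no pseudo-Anosovs on $\torus^2$; the right notion is Farey distance, and the maps are Anosov elements of $\mathrm{SL}(2,\mathbb{Z})$.)

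This is precisely where the $\pw{G}$ factor in the tetrahedron count comes from. Each gluing is realized by a layered triangulation of $\torus^2\times[0,1]$ with $2K_{\maxdeg}\delta=O_{\maxdeg}(\pw{G})$ tetrahedra (\cite[Lemma 4.6]{bachman2017computing}); summed over the $\leq \maxdeg n/2$ arcs this gives $O_{\maxdeg}(\pw{G}\cdot n)$. The blocks themselves contribute only $O(\maxdeg n)$ tetrahedra. So your proposed ``lay out the triangulation along a path decomposition of $G$, one bag at a time'' scheme is neither needed nor clearly well-defined; the paper simply triangulates each block and each gluing collar locally and concatenates. The left-hand inequalities then follow because the dual graph of the layered collar is a path (a ``daisy chain''), so $\dual(\tri_G)$ is obtained from $G$ by replacing nodes with $O(\maxdeg)$-size gadgets and \emph{subdividing} arcs---the latter does not affect width by more than an additive constant.

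A smaller point: the paper's blocks are $\altmanifold(k)=\torus^2_k\times\nsphere{1}$ (torus-with-$k$-holes times circle), not a circle bundle over a planar surface. The torus base guarantees a \emph{unique} Seifert fibering for every $k\geq 1$, which is what makes the argument that $\dual(\jsjdecomp)=G$ go through cleanly; with a planar base the cases $k=1,2$ (solid torus, $\torus^2\times I$) admit multiple fiberings and would require separate treatment.
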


\endgroup

\subparagraph*{Applications} In \Cref{sec:appl} we present several applications of \Cref{thm:construction}. First, we construct a family of bounded-treewidth 3-manifolds with unbounded pathwidth (\Cref{cor:unbounded-pathwidth}). Second, we exhibit Haken 3-manifolds with arbitrary large treewidth (\Cref{cor:unbounded-treewidth}). To our knowledge, no such families of 3-manifolds had been known before. Third, we show that the problem of providing a constant-factor approximation for the treewidth (resp.\ pathwidth) of bounded-degree graphs reduces in polynomial time to computing a constant-factor approximation for the treewidth (resp.\ pathwidth) of 3-manifolds (\Cref{cor:hardness-treewidth}). This reduction, together with previous results \cite{raghavendra2010graph,wu2014inapproximability, yamazaki2018inapproximability}, suggests that this problem may be computationally hard.

\begin{proof}[Outline of the proof of Theorem \ref{thm:jsj-width}]
We now give a preview of the proof of our main result. As the arguments for showing \eqref{eq:jsj-tw} and \eqref{eq:jsj-pw} are analogous, we only sketch the proof of \eqref{eq:jsj-tw}. To show that $\tw{\dual(\jsjdecomp)} \leq 18(\tw{\manifold}+1)$, we start with a triangulation $\tri$ of $\manifold$ whose dual graph has minimal treewidth, i.e., $\tw{\dual(\tri)} = \tw{\manifold}$. Following \cite[Section~6]{huszar2019treewidth}, we construct from $\tri$ a generalized Heegaard splitting $\heegaard$ of $\manifold$, together with a \emph{sweep-out} $\sweepout=\{\sweepout_x : x \in H\}$ along a tree $H$, such that the genus of any level surface $\sweepout_x$ is at most $18\cdot(\tw{\dual(\tri)}+1)$. If $\heegaard$ is not already \emph{strongly irreducible}, we repeatedly perform \emph{weak reductions} until we get a strongly irreducible generalized Heegaard splitting $\heegaard'$ with associated \emph{sweep-out} $\sweepout'=\{\sweepout'_x : x \in H\}$ along the same tree $H$. Crucially, weak reductions do not increase the genera of level surfaces \cite[Section 5.2]{scharlemann2016lecture}, thus $18\cdot(\tw{\dual(\tri)}+1)$ is still an upper bound on those in $\sweepout'$. Now, by the assumption of the JSJ decomposition of $\manifold$ being ``sufficiently complicated,'' each \emph{JSJ torus} can be isotoped in $\manifold$ to coincide with a connected component of some \emph{thin level} of $\heegaard'$. This implies that, after isotopy, each level set $\sweepout'_x$ is incident to at most $18\cdot(\tw{\dual(\tri)}+1)+1$ JSJ pieces of $\manifold$. Sweeping along $H$, we can construct a \emph{tree decomposition} of $\dual(\jsjdecomp)$ where each \emph{bag} contains at most $18\cdot(\tw{\dual(\tri)}+1)+1$ nodes of $\dual(\jsjdecomp)$. Hence $\tw{\dual(\jsjdecomp)} \leq 18\cdot(\tw{\dual(\tri)}+1) = 18\cdot(\tw{\manifold}+1)$.
\end{proof}

\subparagraph*{Organization of the paper} In \Cref{sec:prelims} we review the necessary background on graphs and 3-manifolds. \Cref{sec:gen} contains a primer on generalized Heegaard splittings, which provides us with the indispensable machinery for proving our main result (\Cref{thm:jsj-width}) in \Cref{sec:main}. In \Cref{sec:const} we describe the algorithmic construction of 3-manifolds that ``inherit'' their combinatorial width from that of their JSJ decomposition graph (\Cref{thm:construction}). Then, in \Cref{sec:appl} we present the aforementioned applications of this construction (\Cref{cor:unbounded-pathwidth,cor:unbounded-treewidth,cor:hardness-treewidth}). The paper is concluded with a discussion and some open questions in \Cref{sec:discussion}. Selected results from 3-manifold topology we rely on are collected in the \hyperref[app]{Appendix}.

\section{Preliminaries}
\label{sec:prelims}

\subsection{Graphs}
\label{ssec:graphs} 

A {\em (multi)graph} $G=(V,E)$ is a finite set $V=V(G)$ of {\em nodes}\footnote{Throughout this paper we use the terms {\em edge} and {\em vertex} to refer to an edge or vertex in a $3$-manifold triangulation, whereas the terms {\em arc} and {\em node} denote an edge or vertex in a graph, respectively.} together with a multiset $E=E(G)$ of unordered pairs of not necessarily distinct nodes, called {\em arcs}. The {\em degree} $d_v$ of a node $v \in V$ equals the number of arcs containing it, where loop arcs are counted twice. $G$ is {\em $k$-regular}, if $d_v = k$ for all $v \in V$. A \emph{tree} is a connected graph with $n$ nodes and $n-1$ arcs. 

A {\em tree decomposition} of a graph $G=(V,E)$ is a pair $(\mathscr{X}=\{B_i:i \in I\},\alttree=(I,F))$ with {\em bag}s $B_i \subseteq V$ and a tree $\alttree=(I,F)$, such that
 \begin{enumerate*}
	\item $\bigcup_{i \in I} B_i = V$ (node coverage),
	\item for all arcs $\{u,v\} \in E$, there exists $i \in I$ such that $\{u,v\} \subseteq B_i$ (arc coverage), and
	\item for all $v \in V$, $T_v = \{i \in I:v \in B_i\}$ spans a connected sub-tree of $\alttree$ (sub-tree property).
\end{enumerate*}
The \textit{width} of a tree decomposition equals $\max_{i \in I}|B_i|-1$, and the \emph{treewidth} $\tw{G}$ is the smallest width of any tree decomposition of $G$. Replacing all occurrences of ``tree'' with ``path'' in the definition of treewidth yields the notion of {\em pathwidth} $\pw{G}$. We have $\tw{G} \leq \pw{G}$.

\subsection{Manifolds}
\label{ssec:manifolds}

A {\em $d$-dimensional manifold} is a topological space $\manifold$, where each point $x \in \manifold$ has a neighborhood homeomorphic to $\mathbb{R}^d$ or to the closed upper half-space $\{(x_1,\ldots,x_d) \in \mathbb{R}^d : x_d\geq 0\}$. The latter type of points of $\manifold$ constitute the {\em boundary $\partial \manifold$} of $\manifold$. A compact manifold is said to be {\em closed} if it has an empty boundary. We consider manifolds up to homeomorphism (``continuous deformations'') and write $\manifold_1 \cong \manifold_2$ for homeomorphic manifolds $\manifold_1$ and $\manifold_2$.

\paragraph*{3-Manifolds and surfaces}
\label{ssec:3manifolds}

The main objects of study in this paper are 3-dimensional manifolds, or \emph{$3$-manifolds} for short. Here we give a brief introduction to $3$-manifolds tailored to our purposes. We refer the reader to \cite{schultens2014introduction} (and the references therein) for more details. All 3-manifolds and surfaces encountered in this article are compact and orientable.
We let $\altsurface_g$ denote the closed, connected, orientable surface of genus $g$. We also refer to the $d$-dimensional torus and sphere as $\torus^d$ and $\nsphere{d}$, respectively (hence $\altsurface_0 = \nsphere{2}$ and $\altsurface_1 = \torus^2$). The genus $g(\surface)$ of a (not necessarily connected) surface $\surface$ is defined to be the sum of the genera of its connected components. 

\subparagraph*{Triangulations and the treewidth of 3-manifolds} A {\em triangulation} $\tri$ of a given $3$-manifold $\manifold$ is a finite collection of abstract tetrahedra glued together along pairs of their triangular faces, such that the resulting space is homeomorphic to $\manifold$. Unpaired triangles comprise a triangulation of the boundary of $\manifold$. Note that the face gluings may also identify several tetrahedral edges (or vertices) in a single {\em edge} (or {\em vertex}) of $\tri$. Every compact 3-manifold admits a triangulation \cite{moise1952affine} (cf.\ \cite{bing1959alternative}). Given a triangulation $\tri$, its {\em dual graph} $\dual (\tri)$ is the multigraph whose nodes correspond to the tetrahedra in $\tri$, and arcs to face gluings (\Cref{fig:tetrahedra}).

\begin{figure}[ht]
	\centerline{\includegraphics[scale=\MyFigScale]{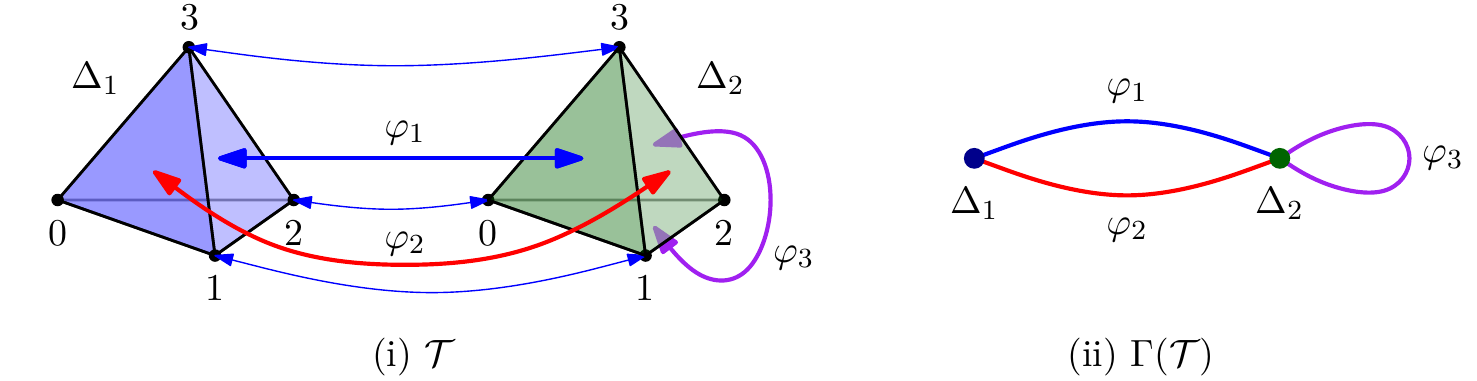}}
	\caption{(i) Example of a triangulation $\tri$ with two tetrahedra $\Delta_1$ and $\Delta_2$, and three face gluing maps $\varphi_1$, $\varphi_2$ and $\varphi_3$. The map $\varphi_1$ is specified to be $\Delta_1 (123) \longleftrightarrow\Delta_2 (103)$. (ii) The dual graph $\Gamma (\tri)$ of the triangulation $\tri$. Reproduced from \cite[Figure 1]{huszar2022pathwidth}.}
	\label{fig:tetrahedra}
\end{figure}

For a compact 3-manifold $\manifold$, its \emph{treewidth} $\tw{\manifold}$ (resp.\ \emph{pathwidth} $\pw{\manifold}$) is defined as the smallest treewidth (resp.\ pathwidth) of the dual graph of any triangulation of $\manifold$.

\subparagraph*{Incompressible surfaces and essential disks} Given a $3$-manifold $\manifold$, a surface $\surface \subset \manifold$ is said to be \emph{properly embedded} in $\manifold$ if $\partial\surface \subset \partial\manifold$ and $(\surface \setminus \partial\surface) \subset (\manifold \setminus \partial\manifold)$. Given a properly embedded surface $\surface \subset \manifold$, an embedded disk $D\subset \manifold$ with  $\operatorname{int}(D) \cap \surface = \emptyset$ and $\partial D \subset \surface$ a curve not bounding a disk on $\surface$ is called a {\em compressing disk}. If such a disk exists, $\surface$ is said to be {\em compressible} in $\manifold$, otherwise -- and if $\surface$ is not a $2$-sphere -- it is called {\em incompressible}. A $3$-manifold $\manifold$ is said to be {\em irreducible}, if every embedded $2$-sphere bounds a $3$-ball in $\manifold$. A disk $D \subset \manifold$ properly embedded in a 3-manifold $\manifold$ is called \emph{inessential} if it cuts off a 3-ball from $\manifold$, otherwise $D$ is called \emph{essential}. A compact, orientable, irreducible $3$-manifold is called {\em Haken} if it contains an orientable, properly embedded, incompressible surface, and otherwise is referred to as {\em non-Haken}.

\subparagraph*{Heegaard splittings of closed 3-manifolds} A {\em handlebody} $\hbody$ is a connected $3$-manifold homeomorphic to a thickened graph. The {\em genus $g(\hbody)$ of $\hbody$} is defined as the genus of its boundary surface $\partial \hbody$.  A {\em Heegaard splitting} of a closed, orientable 3-manifold $\manifold$ is a decomposition $\manifold = \hbody \cup_\surface \hbody'$ where $\hbody$ and $\hbody'$ are homeomorphic handlebodies with $\hbody \cup \hbody' = \manifold$ and $\hbody \cap \hbody' = \partial\hbody = \partial\hbody' = \surface$ called the {\em splitting surface}. Introduced in  \cite{heegaard1916analysis}, the {\em Heegaard genus} $\heeg{\manifold}$ of $\manifold$ is the smallest genus $g(\surface)$ over all Heegaard splittings of $\manifold.$

\subparagraph*{The JSJ decomposition} A central result by Jaco--Shalen \cite{jaco1978decomposition,jaco1979seifert} and Johannson \cite{johannson1979homotopy} asserts that every closed, irreducible and orientable 3-manifold $\manifold$ admits a collection $\mathbf{T}$ of pairwise disjoint embedded, incompressible tori, where each piece of the complement $\manifold \setminus \mathbf{T}$ is either Seifert fibered\footnote{See \cite[Section 3.7]{schultens2014introduction} or \cite[p.\ 18]{hatcher2007notes} for an introduction to Seifert fibered spaces (cf.\ \Cref{app:jsj}).} or atoroidal\footnote{An irreducible 3-manifold $\manifold$ is called {\em atoroidal} if every incompressible torus in $\manifold$ is boundary-parallel.}. A minimal such collection of  tori is unique up to isotopy and gives rise to the so-called {\em JSJ decomposition} (or {\em torus decomposition}) of $\manifold$ \cite[Theorem 1.9]{hatcher2007notes}. We refer to this collection of incompressible tori as the {\em JSJ tori} of $\manifold$. The graph with nodes the JSJ pieces, and an arc for each JSJ torus (with endpoints the two nodes corresponding to its two adjacent pieces) is called the {\em dual graph $\dual(\jsjdecomp)$ of the JSJ decomposition $\jsjdecomp$ of $\manifold$}.

\section{Generalized Heegaard Splittings}
\label{sec:gen}

A Heegaard splitting of a closed 3-manifold is a decomposition  into two identical handlebodies along an embedded surface. Introduced by Scharlemann and Thompson \cite{scharlemann1992thin}, a generalized Heegaard splitting of a compact 3-manifold $\manifold$ (possibly with boundary) is a decomposition of $\manifold$ into several pairs of compression bodies along a family of embedded surfaces, subject to certain rules. Following \cite[Chapters 2 and 5]{scharlemann2016lecture},\footnote{For an open-access version, see \cite[Sections 3.1 and 4]{scharlemann2005lecture}.} here we give an overview of this framework.

\subsection{Compression bodies and forks}

A \emph{compression body} $\compbody$ is a 3-manifold with boundary obtained by the following procedure. \begin{enumerate}
	\item Consider the thickening $\surface \times [0,1]$ of  a closed, orientable, possibly disconnected surface $\surface$,\label{compbody:1}
	\item optionally attach some \emph{$1$-handles}, each being of the form $D \times [0,1]$ (thickened edge, where the disk $D$ is the cross-section), to $\surface \times \{1\}$ along $D\times \{0\} \cup D\times \{1\}$,\label{compbody:2} and
	\item optionally fill in some collection of $2$-sphere components of $\surface \times \{0\}$ with $3$-balls.\label{compbody:3}
\end{enumerate}
We call $\partial_{-}\compbody = \surface \times \{0\} \setminus \{\text{filled-in 2-sphere components}\}$ the {\em lower boundary} of $\compbody$ and $\partial_{+}\compbody = \partial\,\compbody \setminus \partial_-\compbody$ its {\em upper boundary}. By construction, $g(\partial_{+}\compbody) \geq g(\partial_{-}\compbody)$. Note that, if $\partial_{-}\compbody$ is empty, then $\compbody$ is a handlebody. We allow compression bodies to be disconnected.

A \emph{fork}, more precisely an \emph{$n$-fork}, $\fork$ is a tree with $n+2$ nodes $V(\fork)=\{\root,\grip,\tine_1,\ldots,\tine_{n}\}$, where $\root$, called the \emph{root}, is of degree $n+1$, and the other nodes are leaves. One of them, denoted $\grip$, is called the \emph{grip}, and the remaining leaves $\tine_1,\ldots,\tine_n$ are called \emph{tines}. A fork can be regarded as an abstraction of a connected compression body $\compbody$, where the grip corresponds to $\partial_+\compbody$ and each tine corresponds to a connected component of $\partial_-\compbody$, see, e.g.,\ \Cref{fig:compbody}.

\begin{figure}[ht]
\centering
     \begin{subfigure}[b]{0.375\textwidth}
         \centering
         \begin{overpic}[scale=.9]{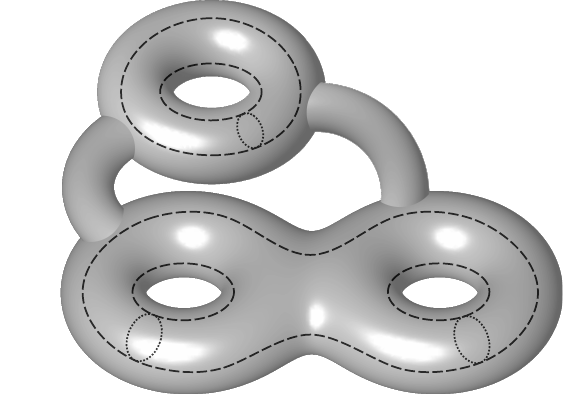}
		\put (3.5,40) {\small{$h_1$}}
		\put (75,47) {\small{$h_2$}}
		\put (56,64) {\small{$\torus^2 \times [0,1]$}}
		\put (41,-6) {\small{$\altsurface_2 \times [0,1]$}}
         \end{overpic}
         \vspace{12pt}
         \caption{A compression body $\compbody$}
         \label{fig:compbody-construction}
     \end{subfigure}
     \hfill
     \begin{subfigure}[b]{0.3\textwidth}
         \centering
         \includegraphics[scale=.9]{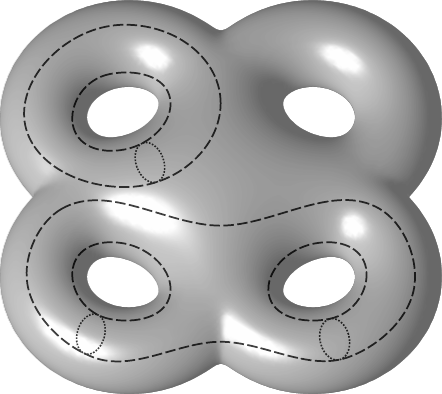}
         \vspace{12pt}
         \caption{$\compbody$ after some isotopy}
         \label{fig:compbody-isotopy}
     \end{subfigure}
     \hfill
     \begin{subfigure}[b]{0.26\textwidth}
         \centering
         \begin{overpic}[scale=.9]{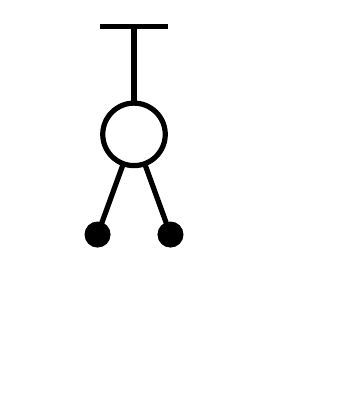}
         		\put (5,91) {\small{$\partial_+\compbody$}}
		\put (46.5,91) {\small{$\leftarrow\textit{grip}$}}
		\put (46.5,64) {\small{$\leftarrow\textit{root}$}}
		\put (48,48.5) {\small{\raisebox{-6.5pt}{\rotatebox{40}{$\leftarrow$}}\,\textit{tine}}}
         		\put (2,25) {\small{$\partial^{(1)}_-\compbody$}}
         		\put (46,25) {\small{$\partial^{(2)}_-\compbody$}}
		\put (55,13) {\small{\rotatebox{90}{$=$}}}
		\put (11,13) {\small{\rotatebox{90}{$=$}}}
		\put (1,3) {\small{$\torus^2 \times\{0\}$}}
		\put (41.25,3) {\small{$\altsurface_2\times\{0\}$}}
         \end{overpic}
         \vspace{12pt}
         \caption{A $2$-fork representing $\compbody$}
         \label{fig:compbody-fork}
     \end{subfigure}
        \caption{The compression body $\compbody$ is obtained by first thickening the disconnected surface $\torus^2 \cup \altsurface_2$ to $(\torus^2 \cup \altsurface_2)\times[0,1]$, then attaching two $1$-handles ($h_1$ and $h_2$) between $\torus^2 \times \{1\}$ and $\altsurface_2\times \{1\}$. For the lower boundary of $\compbody$ we have $\partial_-\compbody=(\torus^2 \cup \altsurface_2)\times\{0\}$, and for its upper boundary $\partial_+\compbody=\partial\,\compbody\setminus\partial_-\compbody \cong \altsurface_4.$}
        \label{fig:compbody}
\end{figure}

\subparagraph*{Non-faithful forks} In certain situations (notably, in the proof of \Cref{thm:jsj-width}, cf.\ \Cref{sec:main}) it is useful to also take a simplified view on a generalized Heegaard splitting. To that end, one may represent several compression bodies by a single \emph{non-faithful} fork, where the grip and the tines may correspond to collections of boundary components. To distinguish faithful forks from non-faithful ones, we color the roots of the latter with gray, see \Cref{fig:faithful-forks}.

\bigskip

\begin{nolinenumbers}
\noindent
\begin{minipage}{0.55\textwidth}
	\centering
	\begin{overpic}[scale=.9]{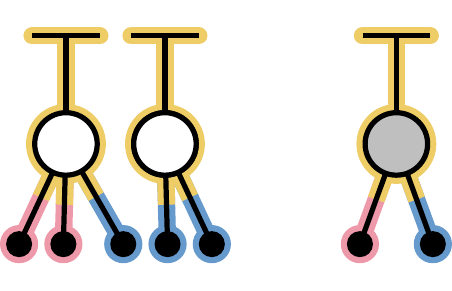}
		\put (57.5,29.85) {\Large$\leadsto$}
	\end{overpic}
	\captionof{figure}{Two faithful forks bundled into a non-faithful fork. The colors show the grouping of the tines.}
	\label{fig:faithful-forks}
\end{minipage}\hfill%
\begin{minipage}{0.36\textwidth}
	\centering
	\begin{overpic}[scale=.9]{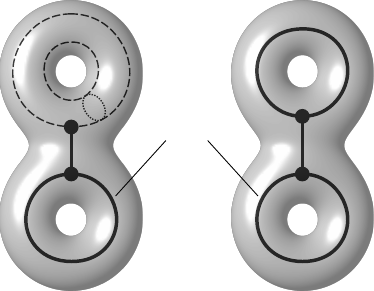}
		\put (47.25,40.25) {$\Gamma$}
		\put (-10,1) {$\compbody$}
		\put (102,1) {$\hbody$}
	\end{overpic}
	\captionof{figure}{\begin{nolinenumbers}Spines of a compression body $\compbody$ and of a handlebody $\hbody$.\end{nolinenumbers}}
	\label{fig:compbody-spine}
\end{minipage}
\end{nolinenumbers}

\subparagraph*{Spines and sweep-outs of compression bodies} A graph $\Gamma$ embedded in a compression body $\compbody$ is called a \emph{spine} of $\compbody$, if every node of $\Gamma$ that is incident to $\partial_-\compbody$ is of degree one, and $\compbody \setminus (\Gamma \cup\partial_-\compbody) \cong \partial_+\compbody \times (0,1]$, see \Cref{fig:compbody-spine}. Assume first that $\partial_-\compbody\neq\emptyset$. A \emph{sweep-out of $\compbody$ along an interval}, say $[-1,1]$, is a continuous map $f\colon\compbody\rightarrow[-1,1]$, such that $f^{-1}(\pm1) = \partial_{\pm}\compbody$,
\begin{align}
	f^{-1}(t) \cong\begin{cases} \partial_{+}\compbody, &\text{if} \phantom{-}t \in (0,1),\\  \Gamma \cup\partial_-\compbody, &\text{if} \phantom{-}t=0,~\text{and} \\ \partial_{-}\compbody, &\text{if} \phantom{-}t \in (-1,0).  \end{cases}
	\label{eq:sweep-out}
\end{align}
Writing $\Sigma_t = f^{-1}(t)$, we get a 1-parameter family of surfaces (except for $\Sigma_0$) ``sweeping through'' $\compbody$. For handlebodies, the definition of a sweep-out is similar, but it ``ends at $0$'' with the spine: $f^{-1}(1) = \partial\hbody$, $f^{-1}(t) \cong \partial\hbody$ if $t \in (0,1)$, and $f^{-1}(0) = \Gamma$, see \Cref{fig:sweep-out}.

\begin{figure}[ht]
     \begin{subfigure}[b]{0.48\textwidth}
         \centering
         \begin{overpic}[scale=.9]{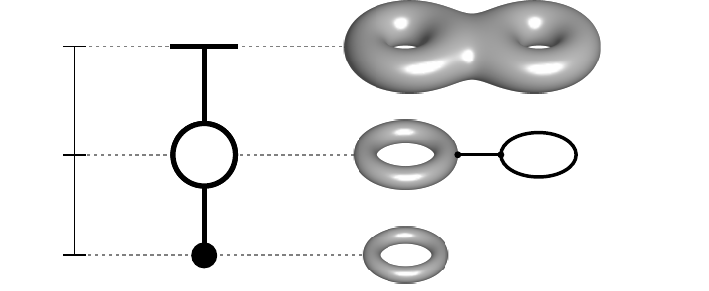}
		\put (0,32.75) {\small{$\phantom{-}1\phantom{-}$}}
		\put (0,17.5) {\small{$\phantom{-}0\phantom{-}$}}
		\put (0,3.25) {\small{$-1\phantom{-}$}}
		\put (88,32.75) {\small{$\Sigma_{1}$}}
		\put (88,17.5) {\small{$\Sigma_{0}$}}
		\put (88,3.25) {\small{$\Sigma_{-1}$}}
         \end{overpic}
     \end{subfigure}
     \begin{subfigure}[b]{0.48\textwidth}
         \centering
         \begin{overpic}[scale=.9]{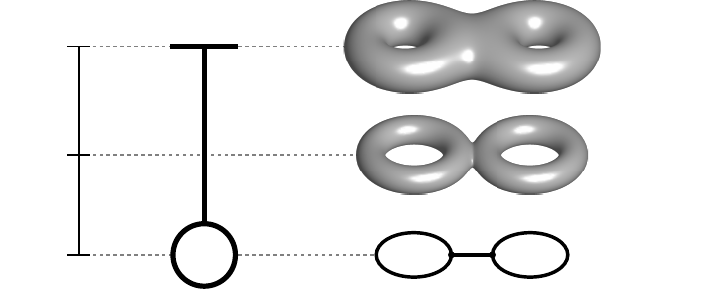}
		\put (0,32.75) {\small{$\phantom{1/}1$}}
		\put (0,17.5) {\small{$1/2$}}
		\put (0,3.25) {\small{$\phantom{1/}0$}}
		\put (88,32.75) {\small{$\Sigma_{1}$}}
		\put (88,17.5) {\small{$\Sigma_{1/2}$}}
		\put (88,3.25) {\small{$\Sigma_{0}$}}
         \end{overpic}
     \end{subfigure}
     \caption{Sweep-outs of the compression body $\compbody$ and of the handlebody $\hbody$ shown in \Cref{fig:compbody-spine}.}
     \label{fig:sweep-out}
\end{figure}

\begin{remark}[Sweep-out along a fork]
\label{rem:sweep-out-fork}
It is straightforward to adapt the definition of sweep-out in a way that, instead of an interval, we sweep a compression body $\compbody$ along a (faithful or non-faithful) fork $\fork$. Such a sweep-out is a continuous map $f\colon\compbody\rightarrow\|\fork\|$ (where $\|\fork\|$ denotes a geometric realization of $\fork$) that satisfies very similar requirements to those in \eqref{eq:sweep-out}, however, the components of the lower boundary $\partial_-\compbody$ may appear in different level sets, see \Cref{fig:sweep-out-fork}.
\end{remark}

\begin{figure}[ht]
     \begin{subfigure}[b]{0.48\textwidth}
         \centering
         \begin{overpic}[scale=.9]{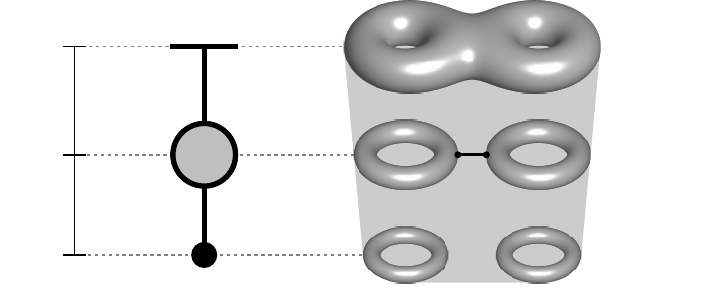}
		\put (0,32.75) {\small{$\phantom{-}1\phantom{-}$}}
		\put (0,17.5) {\small{$\phantom{-}0\phantom{-}$}}
		\put (0,3.25) {\small{$-1\phantom{-}$}}
		\put (88,32.75) {\small{$\Sigma_{1}$}}
		\put (88,17.5) {\small{$\Sigma_{0}$}}
		\put (88,3.25) {\small{$\Sigma_{-1}$}}
         \end{overpic}
     \end{subfigure}
     \begin{subfigure}[b]{0.48\textwidth}
         \centering
         \begin{overpic}[scale=.9]{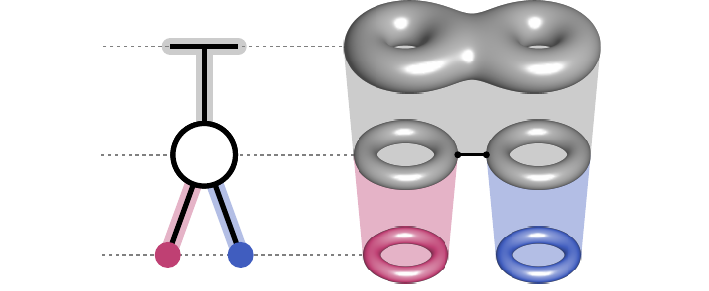}
		\put (4.25,33.25) {\small{grip}}
		\put (4,18) {\small{root}}
		\put (2.5,3.75) {\small{tines}}
		\put (88,32.75) {\small{$\Sigma_{\grip}$}}
		\put (88,17.5) {\small{$\Sigma_{\root}$}}
		\put (88,3.25) {\small{$\Sigma_{\tine_1}\cup\Sigma_{\tine_2}$}}
         \end{overpic}
     \end{subfigure}
     \caption{Sweep-out of a compression body along $[-1,1]$ (left) and along its faithful fork (right).}
     \label{fig:sweep-out-fork}
\end{figure}

\subsection{Generalized Heegaard splittings and fork complexes}

\subparagraph*{Heegaard splittings revisited} Let $\manifold$ be a 3-manifold, and $\{\partial_1\manifold, \partial_2\manifold\}$ be a partition of the components of $\partial\manifold$. A \emph{Heegaard splitting of $(\manifold, \partial_1\manifold, \partial_2\manifold)$} is a triplet $(\compbody_1, \compbody_2, \surface)$, where $\compbody_1$ and $\compbody_2$ are compression bodies with $\compbody_1 \cup \compbody_2 = \manifold$, $\compbody_1 \cap \compbody_1 = \partial_+\compbody_1=\partial_+\compbody_2 = \surface$, $\partial_-\compbody_1 = \partial_1\manifold$, and $\partial_-\compbody_2 = \partial_2\manifold$. The \emph{genus} of the Heegaard splitting $(\compbody_1, \compbody_2,\surface)$ is the genus $g(\surface)$ of the \emph{splitting surface} $\surface$. The \emph{Heegaard genus} $\heeg{\manifold}$ of $\manifold$ is the smallest genus of any Heegaard splitting of $(\manifold, \partial_1\manifold, \partial_2\manifold)$, taken over all partitions $\{\partial_1\manifold, \partial_2\manifold\}$ of $\partial\manifold$.

\begin{figure}[ht]
	\centering
	\begin{overpic}[scale=.7]{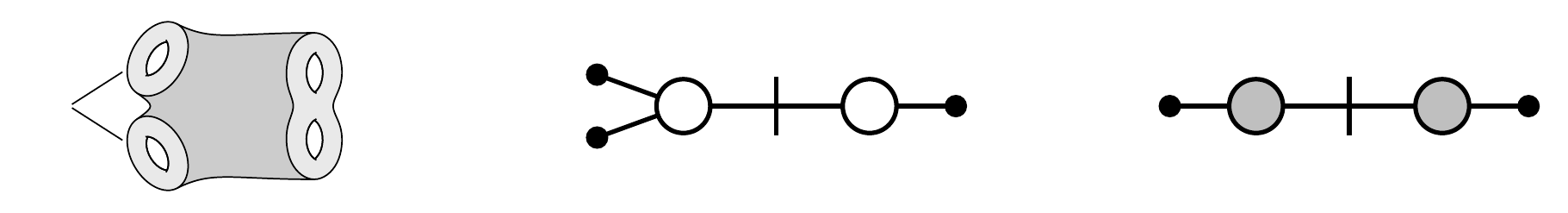}
		\put (-1,6) {\small{$\partial_1\manifold$}}
		\put (13.75,6) {\small{$\manifold$}}
		\put (22.75,6) {\small{$\partial_2\manifold$}}
		\put (31.5,6) {\small{$\partial_-\compbody_1$}}
		\put (38,1) {\scalebox{1.4}{\rotatebox{90}{$\Bigg\{$}}}
		\put (42.75,-1.25) {\small{$\compbody_1$}}
		\put (48.5,9.5) {\small{$\surface$}}
		\put (49.75,1) {\scalebox{1.4}{\rotatebox{90}{$\Bigg\{$}}}
		\put (54.5,-1.25) {\small{$\compbody_2$}}
		\put (63,6) {\small{$\partial_-\compbody_2$}}
		\put (71.75,8.75) {\small{$\partial_-\compbody_1$}}
		\put (74.5,1) {\scalebox{1.4}{\rotatebox{90}{$\Bigg\{$}}}
		\put (79.25,-1.25) {\small{$\compbody_1$}}
		\put (85,9.5) {\small{$\surface$}}
	  	\put (86.25,1) {\scalebox{1.4}{\rotatebox{90}{$\Bigg\{$}}}
		\put (91,-1.25) {\small{$\compbody_2$}}
		\put (95.25,8.75) {\small{$\partial_-\compbody_2$}}		
	\end{overpic}
	\vspace{6pt}
	\caption{Schematic of a $3$-manifold $\manifold$ with a partition of its boundary components~(left). Faithful (center) and non-faithful (right) \emph{fork complexes} representing a Heegaard splitting of $(\manifold, \partial_1\manifold, \partial_2\manifold)$.}
	\label{fig:heegaard-splitting-boundary}
\end{figure}

\begin{proposition}[{\cite[Theorem 2.1.11]{scharlemann2016lecture}}, cf.\ {\cite[Appendix B]{huszar2022pathwidth}}] For any partition $\{\partial_1\manifold, \partial_2\manifold\}$ of the boundary components of $\manifold$, there exists a Heegaard splitting of $(\manifold, \partial_1\manifold, \partial_2\manifold)$.
\label{prop:heegaard}
\end{proposition}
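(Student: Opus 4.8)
\smallskip
\noindent\textit{Proof proposal.}
The plan is to read off the splitting from a handle decomposition of $\manifold$ that is built on the prescribed part $\partial_1\manifold$ of the boundary and then rearranged by index. Since the definition of a Heegaard splitting of $(\manifold,\partial_1\manifold,\partial_2\manifold)$ permits the compression bodies --- and hence the surface $\surface$ --- to be disconnected, I would first reduce to the case that $\manifold$ is connected by treating each component (with its induced boundary partition) separately.

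Next I would invoke the existence of a handle decomposition of $\manifold$ \emph{relative to $\partial_1\manifold$}: one can write $\manifold = (\partial_1\manifold\times[0,1]) \cup h_1 \cup \dots \cup h_m$, where the collar is glued along $\partial_1\manifold\times\{0\} = \partial_1\manifold$ and each $h_j$ is a handle of index in $\{0,1,2,3\}$ attached to the boundary built so far; such a decomposition exists, e.g., by refining a triangulation of $\manifold$ (which exists by Moise's theorem) or via a self-indexing Morse function $f\colon\manifold\to[0,1]$ with $f^{-1}(0)=\partial_1\manifold$ and $f^{-1}(1)=\partial_2\manifold$. A $0$-handle is needed only when $\partial_1\manifold=\emptyset$, and a $3$-handle only when $\partial_2\manifold=\emptyset$. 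I would then apply the handle rearrangement theorem to assume the handles occur in non-decreasing order of index; the relevant point is that a later handle of index $\mu$ can be slid past an earlier handle of index $\lambda\geq\mu$, since in the intervening ($2$-dimensional) boundary surface the attaching $(\mu-1)$-sphere of the former and the belt $(2-\lambda)$-sphere of the latter satisfy $(\mu-1)+(2-\lambda)<2$ and so can be made disjoint.

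With the handles so ordered, set
\[
  \compbody_1 = (\partial_1\manifold\times[0,1]) \cup (\text{the $0$- and $1$-handles}),
  \qquad
  \compbody_2 = \ov{\manifold\setminus\compbody_1},
\]
and let $\surface = \compbody_1\cap\compbody_2$. By construction $\compbody_1$ is a thickened copy of $\partial_1\manifold$ with some $1$-handles attached on top (plus, in the degenerate case, a $3$-ball, which is covered by the clause of the definition that fills $2$-sphere components of the lower boundary with $3$-balls), hence a compression body with $\partial_-\compbody_1 = \partial_1\manifold$ and $\partial_+\compbody_1 = \surface$. Reading the handles making up $\compbody_2$ from the $\partial_2\manifold$ side turns each $3$-handle into a $3$-ball and each $2$-handle into a $1$-handle, so $\compbody_2$ is likewise a compression body with $\partial_-\compbody_2 = \partial_2\manifold$ and $\partial_+\compbody_2 = \surface$. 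Since $\compbody_1\cup\compbody_2=\manifold$ and $\compbody_1\cap\compbody_2 = \partial_+\compbody_1 = \partial_+\compbody_2 = \surface$, the triplet $(\compbody_1,\compbody_2,\surface)$ is the desired Heegaard splitting.

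The steps I expect to be the most delicate are the precise statement and use of the handle rearrangement theorem and, especially, the bookkeeping in the degenerate cases, where a component of $\manifold$ misses $\partial_1\manifold$ or $\partial_2\manifold$ and a $0$- or $3$-handle must be inserted and reconciled with the exact definition of a compression body; the remainder of the argument is routine.
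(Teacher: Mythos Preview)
The paper does not supply its own proof of this proposition; it merely quotes the result from \cite[Theorem 2.1.11]{scharlemann2016lecture} (cf.\ \cite[Appendix B]{huszar2022pathwidth}). Your argument via a relative handle decomposition, index-reordering, and cutting between the $1$- and $2$-handles is precisely the classical proof found in those references, and it is correct. The only places that deserve a touch more care are the ones you yourself flag: when $\partial_1\manifold=\emptyset$ (resp.\ $\partial_2\manifold=\emptyset$) on a component of $\manifold$, one should explicitly arrange for a \emph{single} $0$-handle (resp.\ $3$-handle) there by cancelling redundant $0$/$1$ (resp.\ $2$/$3$) pairs, so that $\compbody_1$ (resp.\ $\compbody_2$) is a genuine handlebody on that component and hence a compression body in the sense of the paper's definition.
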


\subparagraph*{Generalized Heegaard splittings} A \emph{generalized Heegaard splitting} $\heegaard$ of a $3$-manifold $\manifold$ consists of
\begin{enumerate*}
	\item a decomposition $\manifold = \bigcup_{i \in I}\manifold_i$ into submanifolds $\manifold_i \subseteq \manifold$ intersecting along closed surfaces (\Cref{fig:manifold-decomp}),
	\item for each $i \in I$ a partition $\{\partial_1\manifold_i, \partial_2\manifold_i\}$ of the components of $\partial\manifold_i$  that together satisfy an \emph{acyclicity condition}: there is an ordering, i.e., a bijection $\ell\colon I \rightarrow \{1,\ldots,|I|\}$, such that, for each $i \in I$ the components of $\partial_1\manifold_i$ (resp.\ $\partial_2\manifold_i$) not belonging to $\partial\manifold$ are only incident to submanifolds $\manifold_j$ with $\ell(j) < \ell(i)$ (resp.\ $\ell(j) > \ell(i)$), and
	\item a choice of a Heegaard splitting $(\compbody^{(i)}_{1}, \compbody^{(i)}_{2}, \surface_i)$ for each $(\manifold_i, \partial_1\manifold_i, \partial_2\manifold_i)$.
\end{enumerate*} Such a choice of \emph{splitting surfaces} $\surface_i$ $(i \in I)$ is said to be \emph{compatible} with $\ell$, see, e.g.,\ \Cref{fig:gen-heegaard}.

\begin{figure}[ht]

	\vspace{24pt}
	
	\begin{subfigure}[b]{0.29\textwidth}
		\centering
		\begin{overpic}[scale=.9]{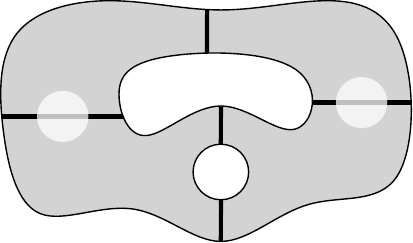}
			\put (15,44) {\small{$\manifold_2$}}
			\put (25,15) {\small{$\manifold_1$}}
			\put (75,46) {\small{$\manifold_4$}}
			\put (73,16) {\small{$\manifold_3$}}
			\put (10,28) {\small{$\altaltsurface_1$}}
			\put (48.5,-10) {\small{$\altaltsurface_2$}}
			\put (45,61) {\small{$\altaltsurface_3$}}
			\put (82.5,31.5) {\small{$\altaltsurface_4$}}
		\end{overpic}
		\vspace{24pt}
		\caption{A decomposition of $\manifold$ into four submanifolds $\manifold_1,\ldots,\manifold_4$ intersecting along (possibly disconnected) closed surfaces $\altaltsurface_i$.}
		\label{fig:manifold-decomp}
	\end{subfigure}
	\hfill
	\begin{subfigure}[b]{0.29\textwidth}
		\centering
		\begin{overpic}[scale=.9]{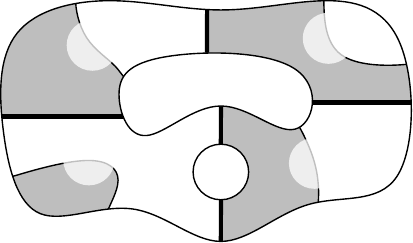}
			\put (18,45.25) {\small{$\surface_2$}}
			\put (17.25,17.25) {\small{$\surface_1$}}
			\put (75.25,47.25) {\small{$\surface_4$}}
			\put (72,16.5) {\small{$\surface_3$}}
			\put (-4,-5) {\footnotesize{$\partial_1\manifold_1 = \emptyset$}}
			\put (-4,-15) {\footnotesize{$\partial_2\manifold_1 = \altaltsurface_1 \cup \altaltsurface_2$}}
			\put (67,-5) {\footnotesize{$\partial_1\manifold_3= \altaltsurface_2$}}
			\put (67,-15) {\footnotesize{$\partial_2\manifold_3 = \altaltsurface_4$}}
			\put (-4,75) {\footnotesize{$\partial_1\manifold_2 = \altaltsurface_1$}}
			\put (-4,65) {\footnotesize{$\partial_2\manifold_2 = \altaltsurface_3$}}
			\put (47,75) {\footnotesize{$\partial_1\manifold_4= \altaltsurface_3 \cup \altaltsurface_4$}}
			\put (47,65) {\footnotesize{$\partial_2\manifold_4 = \emptyset$}}
		\end{overpic}
		\vspace{24pt}
		\caption{An admissible choice of splitting surfaces $\surface_i$ for the $\manifold_i$ that is compatible with the trivial ordering $\ell (i) \mapsto i$.}
		\label{fig:gen-heegaard}
	\end{subfigure}
	\hfill
	\begin{subfigure}[b]{0.29\textwidth}
		\centering
		\begin{overpic}[scale=.9]{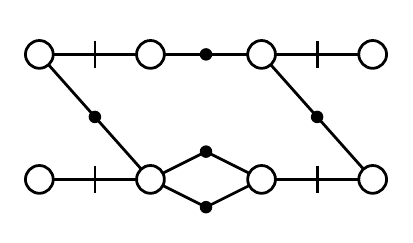}
			\put (19,52) {\small{$\surface_2$}}
			\put (19,2.5) {\small{$\surface_1$}}
			\put (72.75,52) {\small{$\surface_4$}}
			\put (72.75,2.5) {\small{$\surface_3$}}
		\end{overpic}
		\vspace{24pt}
		\caption{The faithful fork complex that represents the generalized Heegaard splitting shown in the center (\Cref{fig:gen-heegaard}).}
		\label{fig:forkcomp}
	\end{subfigure}
	\caption{Schematics of a generalized Heegaard splitting (based on figures from \cite[Section 2.4]{huszar2022pathwidth}).}
	\label{fig:ghs-example}
\end{figure}

Just as compression bodies can be represented by forks, (generalized) Heegaard splittings can be visualized via \emph{fork complexes}, see \Cref{fig:heegaard-splitting-boundary,fig:forkcomp} (cf.\ \cite[Section 5.1]{scharlemann2016lecture} for details).

\subparagraph*{Sweep-outs of 3-manifolds}
A generalized Heegaard splitting $\heegaard$ of a $3$-manifold $\manifold$ induces a \emph{sweep-out} $f\colon\manifold\rightarrow\|\forkcomp\|$ of $\manifold$ along any fork complex $\forkcomp$ that represents $\heegaard$ (here $\|\forkcomp\|$ denotes a drawing, i.e., a \emph{geometric realization} of the abstract fork complex $\forkcomp$) by concatenating the corresponding sweep-outs of the compression bodies that comprise $\heegaard$ (cf.\ \Cref{rem:sweep-out-fork}). We also refer to a sweep-out $f\colon\manifold\rightarrow\|\forkcomp\|$ by the ensemble $\sweepout=\{\sweepout_x : x \in \|\forkcomp\|\}$ of its \emph{level sets}, where $\Sigma_x = f^{-1}(x)$.

\subparagraph*{The width of a generalized Heegaard splitting}

For a generalized Heegaard splitting $\heegaard$, the surfaces $\surface_i$ $(i \in I)$ are also called the \emph{thick levels}, and the lower boundaries $\partial_-\compbody^{(i)}_1$, $\partial_-\compbody^{(i)}_2$ are called the \emph{thin levels} of $\heegaard$. The \emph{width} $\width(\heegaard)$ of $\heegaard$ is the sequence obtained by taking a non-increasing ordering of the multiset $\{g(\surface_i) : i \in I\}$ of the genera of the thick levels.

A generalized Heegaard splitting $\heegaard$ of a 3-manifold $\manifold$ for which $\width(\heegaard)$ is minimal with respect to the lexicographic order ($<$) among all splittings of $\manifold$ is said to be in \emph{thin position}.

\subsection{Weak reductions}
\label{ssec:weak}

A Heegaard splitting $(\compbody_1,\compbody_2,\surface)$ of a connected 3-manifold is said to be \emph{weakly reducible} \cite{casson1987reducing}, if there are essential disks $D_i \subset \compbody_i$ $(i=1,2)$\footnote{The assumption that $D_i$ is essential in the compression body $\compbody_i$ implies that $\partial D_i \subset \partial_+\compbody_i = \surface$ $(i=1,2)$.} with $\partial D_1 \cap \partial D_2 = \emptyset$, see \Cref{fig:weak-reduction-surface}. In this case we also say that the splitting surface $\surface$ is weakly reducible. A generalized Heegaard splitting $\heegaard$ is \emph{weakly reducible}, if at least one of its splitting surfaces is weakly reducible; otherwise $\heegaard$ is called \emph{strongly irreducible}. Every 3-manifold possesses a strongly irreducible generalized Heegaard splitting and this fact can be exploited in various contexts (e.g., in the proof of \Cref{thm:jsj-width}). The usefulness of such splittings is mainly due to the following seminal result.

\begin{theorem}[{\cite[Lemma 5.2.4]{scharlemann2016lecture}}, {\cite[Rule 5]{scharlemann1992thin}}]\label{thm:scharlemann-thompson}
Let $\heegaard$ be a strongly irreducible generalized Heegaard splitting. Then every connected component of every thin level of $\heegaard$ is incompressible.
\end{theorem}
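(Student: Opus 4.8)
The plan is to argue by contradiction: supposing that some connected component $S$ of some thin level of $\heegaard$ is compressible in the ambient $3$-manifold, I would manufacture a pair of disjoint essential disks lying on the two sides of one of the thick levels of $\heegaard$ -- which is exactly a weak reduction, contradicting strong irreducibility.

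The first step is to localise a compressing disk. By construction a thin-level component is a component of $\partial_-\compbody$ for each of the two compression bodies that flank it; equivalently, cutting $\manifold$ along all thin levels yields precisely the submanifolds $\manifold_i=\compbody^{(i)}_1\cup_{\surface_i}\compbody^{(i)}_2$ of the decomposition. Starting from an arbitrary compressing disk for $S$ and running the usual innermost-disk argument -- induct on the number of circles in which the disk meets the thin levels: a trivial such circle is removed using irreducibility, while a non-trivial innermost one already exhibits a deeper thin-level component as compressible -- I obtain a compressing disk $D$ for some thin-level component, which I rename $S$, whose interior is disjoint from every thin level. Hence $D$ lies inside a single piece $\manifold_i$, with $\partial D$ on a component $S$ of, say, $\partial_-\compbody^{(i)}_1$; write $\surface=\surface_i$, $\compbody_1=\compbody^{(i)}_1$, $\compbody_2=\compbody^{(i)}_2$.

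The heart of the argument is to read a weak reduction off the position of $D$ relative to $\surface$. Isotope $D$ rel $\partial D$ to minimise $|D\cap\surface|$, a union of circles since $\partial D\subset\partial_-\compbody_1$ is disjoint from $\surface=\partial_+\compbody_1$. Now $\partial_-\compbody_1$ is incompressible in $\compbody_1$ (the inclusion is $\pi_1$-injective, as $\compbody_1$ deformation retracts onto $\partial_-\compbody_1$ together with a graph), so $D\cap\surface\neq\emptyset$ -- otherwise $D$ would compress $S$ inside $\compbody_1$. Minimality together with irreducibility then forces every circle of $D\cap\surface$ to be essential on $\surface$: an inessential one, taken innermost on $\surface$, cobounds a $2$-sphere with a subdisk of $D$ and can be removed by isotoping $D$ across the ball it bounds. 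An innermost circle of $D\cap\surface$ on $D$ therefore bounds an essential disk contained in $\compbody_1$ or in $\compbody_2$. To finish, I follow $D$ outward from $\partial D$: it begins inside $\compbody_1$, and the first circle of $D\cap\surface$ it crosses cuts off from $D$ a subsurface lying in $\compbody_2$, out of which -- after discarding boundary-parallel annuli, excluded by minimality -- I extract an essential disk $D_2\subset\compbody_2$; pairing $D_2$ with an essential disk $D_1\subset\compbody_1$ cut off by another circle of $D\cap\surface$ (so that $\partial D_1\cap\partial D_2=\emptyset$) exhibits $\surface$, and hence $\heegaard$, as weakly reducible -- the desired contradiction.

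I expect this last step to be the main obstacle. A priori $D$ could meet $\surface$ in a pattern that yields an essential disk on only one side of $\surface$, with merely incompressible annuli on the other, and ruling this out -- or manufacturing the missing disk from such an annulus, using the product part of the relevant compression body and the acyclicity ordering of the decomposition to orient $D$ with respect to $\surface$ -- is where the full structure of a generalized Heegaard splitting must be used (and where one tacitly uses that $\heegaard$ is presented in reduced form, so that no thin-level component is parallel in $\manifold_i$ to $\surface_i$). A conceptually cleaner route, available when $\heegaard$ is in thin position rather than merely strongly irreducible, is to compress the offending thin level and re-amalgamate, producing a generalized Heegaard splitting of strictly smaller width and contradicting minimality; the argument sketched above is the variant that uses only strong irreducibility.
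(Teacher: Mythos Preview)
The paper does not give its own proof of this statement: \Cref{thm:scharlemann-thompson} is quoted from \cite[Lemma 5.2.4]{scharlemann2016lecture} (cf.\ \cite[Rule 5]{scharlemann1992thin}) and used as a black box. So there is nothing to compare your argument against within the paper itself; what one can say is that your proposal follows the standard line of the proof in those references.

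Your outline is correct and standard: localise a compressing disk $D$ inside a single piece $\manifold_i=\compbody_1\cup_{\surface}\compbody_2$ via an innermost-circle argument on the thin levels, minimise $|D\cap\surface|$, use incompressibility of $\partial_-\compbody_1$ in $\compbody_1$ to force $D\cap\surface\neq\emptyset$, and observe that all intersection circles are essential on $\surface$. You also correctly flag the genuine difficulty: from the pattern of $D\cap\surface$ one does not automatically obtain essential disks on \emph{both} sides of $\surface$.

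Two comments on that gap. First, your phrasing ``an essential disk $D_1\subset\compbody_1$ cut off by another circle'' does not always succeed; in the extremal case $|D\cap\surface|=1$ the only subdisk lies in $\compbody_2$ and the $\compbody_1$-side is a spanning annulus $A$. The standard resolution is that such an incompressible spanning annulus is vertical in $\compbody_1$, so cutting $\compbody_1$ along $A$ still yields a (union of) compression bodies; if $\compbody_1$ is non-trivial one finds a meridian disk in the complement of $A$, disjoint from $\partial E$, and this together with the innermost disk $E\subset\compbody_2$ gives the weak reduction. Second, your parenthetical is exactly right: one must exclude trivial compression bodies (equivalently, thin-level components parallel to a thick level), which is part of the standing hypotheses in \cite{scharlemann2016lecture}; otherwise one pushes the compressing disk across the product and appeals to the acyclicity ordering, as you suggest. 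With these refinements the argument goes through, and your alternative via thin position (compress the thin level and re-amalgamate to decrease width) is precisely Scharlemann--Thompson's original route.

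One minor technical point: when you remove inessential circles of $D\cap\surface$ you should take them innermost on $D$, not on $\surface$, to guarantee that the corresponding subdisk of $D$ is actually a disk; and you are tacitly using irreducibility of $\manifold_i$ at that step, which is available in the paper's setting but deserves mention in general.
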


Given a weakly reducible generalized Heegaard splitting $\heegaard$ with a weakly reducible splitting surface\footnote{For future reference we remind the reader that splitting surfaces, also called thick levels, correspond to grips in the faithful fork complex that represents the generalized Heegaard splitting $\heegaard$.}  $\surface$ and essential disks $D_1$ and $D_2$ as above, one can execute a \emph{weak reduction} at $\surface$. This modification amounts to performing particular cut-and-paste operations on $\surface$ guided by $D_1$ and $D_2$, and decomposes each of the two compression bodies adjacent to $\surface$ into a pair of compression bodies. Importantly, this operation results in another generalized Heegaard splitting $\heegaard'$ of the same 3-manifold with $\width(\heegaard') < \width(\heegaard)$.

We illustrate weak reductions via \Cref{ex:weak-reduction} and refer to \cite[Proposition 5.2.3]{scharlemann2016lecture} for further details (notably, Figures 5.8--5.13 therein, but also Lemma 5.2.2, Figures 5.6 and 5.7, and Proposition 5.2.4), including an exhaustive list of instances of weak reductions.\footnote{In the open-access version \cite{scharlemann2005lecture} these are Proposition 4.2.3 and Figures 87--92 (as well as Lemma 4.2.2, Figures 85 and 86, and Lemma 4.2.4).}

\begin{figure}[ht]
	\centering
	\begin{overpic}[scale=.9]{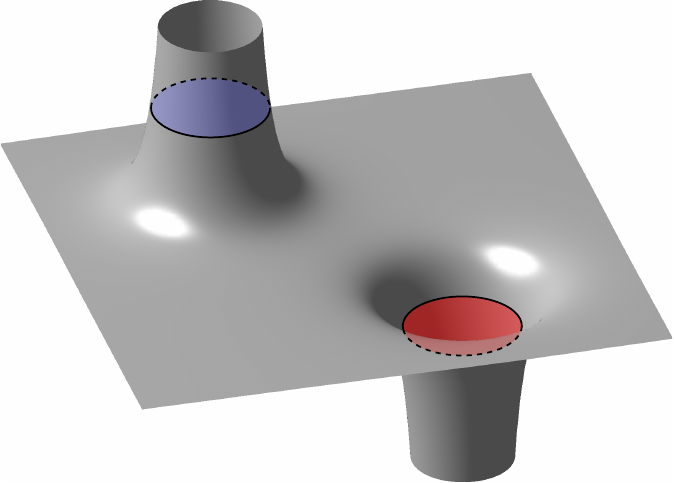}
		\put (28.25,54) {\small{$D_1$}}
		\put (25.25,18.5) {\small{$\surface$}}
		\put (65.5,21.5) {\small{$D_2$}}
	\end{overpic}
	\caption{Local picture of a portion of a weakly reducible splitting surface $\surface$.}
	\label{fig:weak-reduction-surface}
\end{figure}

\section{The Main Result}
\label{sec:main}

In this section we prove Theorem \ref{thm:jsj-width}. The inequalities \eqref{eq:jsj-tw} and \eqref{eq:jsj-pw} are deduced in the same way, thus we only show the proof of \eqref{eq:jsj-tw} in detail, and then explain how it can be adapted to that of \eqref{eq:jsj-pw}. First, we specify what we mean by a ``sufficiently complicated'' JSJ decomposition.

\begin{definition}
\label{def:complicated-jsj}
Given $\delta > 0$, the JSJ decomposition of an irreducible $3$-manifold $\manifold$ is \emph{$\delta$-complicated}, if any incompressible or strongly irreducible Heegaard surface $\surface \subset \manifold$ with genus $g(\surface) \leq \delta$ can be isotoped to be simultaneously disjoint from all the JSJ tori of $\manifold$.
\end{definition}

\begin{proof}[Proof of inequality (\ref{eq:jsj-tw})]
Our goal is to prove that $\tw{\dual(\jsjdecomp)} \leq 18(\tw{\manifold}+1)$, where $\dual(\jsjdecomp)$ is the dual graph of the \emph{$\delta$-complicated} JSJ decomposition of the irreducible 3-manifold $\manifold$. To this end, we set $\delta=18(\tw{\manifold}+1)$ and fix a triangulation $\tri$ of $\manifold$, whose dual graph $\dual(\tri)$ has minimal treewidth, i.e., $\tw{\dual(\tri)} = \tw{\manifold}$. Now, \eqref{eq:jsj-tw} is established in four stages.

\begin{claimproof}[1.\ Setup]
By invoking the construction in \cite[Section 6]{huszar2019treewidth}, from $\tri$ we obtain a generalized Heegaard splitting $\heegaard$ of $\manifold$ together with a sweep-out $f\colon\manifold\rightarrow\|\nfforkcomp\|$ along a non-faithful fork complex $\nfforkcomp$ representing $\heegaard$. By construction, $\nfforkcomp$ is a tree with all of its nodes having degree one or three (\Cref{fig:tree-ghs}), moreover all non-degenerate level surfaces $\sweepout_x = f^{-1}(x)$ have genus bounded above by $18(\tw{\manifold}+1)$.
Let $\fforkcomp$ be the faithful fork complex representing $\heegaard$. Note that $\fforkcomp$ is obtained from $\nfforkcomp$ by replacing every non-faithful fork $\fork \in \nfforkcomp$ with the collection $\mathscr{C}_\fork$ of faithful forks that accurately represents the (possibly disconnected) compression body $\compbody$ corresponding to $\fork$. The inverse operation, i.e., for each $\fork \in \nfforkcomp$ bundling all faithful forks in $\mathscr{C}_\fork$ into $\fork$ (see \Cref{fig:faithful-forks}), induces a projection map $\pi\colon\|\fforkcomp\|\rightarrow\|\nfforkcomp\|$ between the drawings (cf.\ Figures \ref{fig:thm-proof}(i)--(ii)). Note that every fork, tine, or grip in $\nfforkcomp$ corresponds to a collection of the corresponding items in $\fforkcomp$ and so the projection map is well-defined.
\end{claimproof}

\begin{figure}[ht]
	\centering
	\begin{overpic}[scale=.909091]{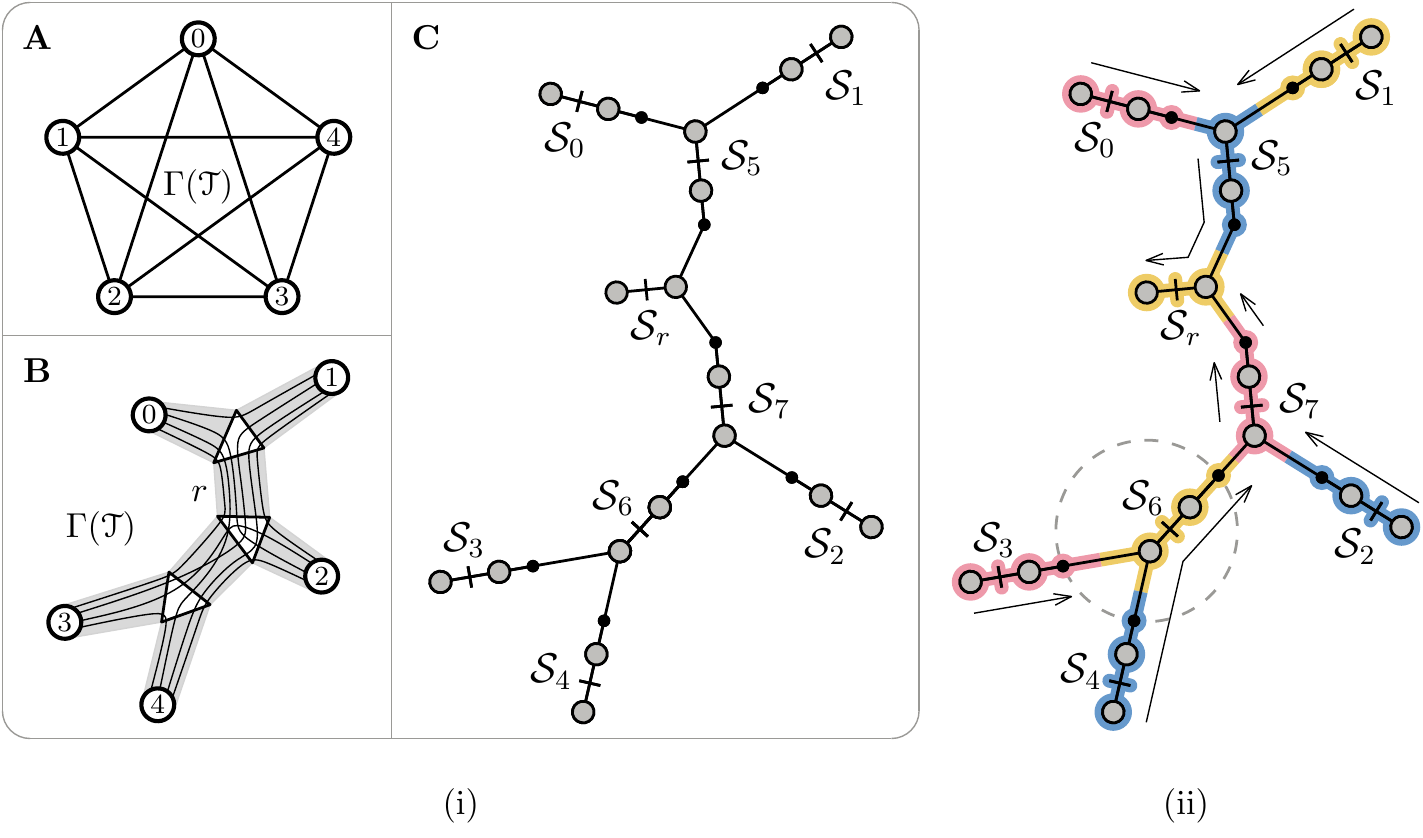}
		\put (54,10) {\Large{$\nfforkcomp$}}
	\end{overpic}
	\caption{(i) \textbf{A.} The dual graph $\dual(\tri)$ of some triangulation $\tri$ of a 3-manifold $\manifold$. \textbf{B.} Low-congestion routing of $\dual(\tri)$ along a host tree with marked root arc $r$. \textbf{C.} Drawing of a non-faithful fork complex $\nfforkcomp$ that represents the generalized Heegaard splitting of $\manifold$ induced by the routing of $\dual(\tri)$. The genera of all (possibly disconnected) thick levels $\surface_i$ is bounded above by $18(\tw{\manifold}+1)$. (ii) Color-coded segmentation of $\|\nfforkcomp\|$ in preparation for the next stage of the proof (cf.\ Figure \ref{fig:thm-proof}).} 
	\label{fig:tree-ghs}
\end{figure}

\begin{claimproof}[2.\ Weak reductions]
In case $\heegaard$ is weakly reducible, we repeatedly perform weak reductions until we obtain a strongly irreducible generalized Heegaard splitting $\heegaard'$ of $\manifold$. Since weak reductions always decrease the width of a generalized Heegaard splitting, this process terminates after finitely many iterations. Throughout, we maintain that the drawings of the associated faithful fork complexes follow $\|\nfforkcomp\|$. Let $\fforkcomp'$ be the faithful fork complex representing the final splitting $\heegaard'$, $f'\colon\manifold\rightarrow\|\fforkcomp'\|$ be the sweep-out of $\manifold$ induced by $\heegaard'$, and $\pi'\colon\|\fforkcomp'\|\rightarrow\|\nfforkcomp\|$ be the associated projection map (\Cref{fig:thm-proof}(iii)). Due to the nature of weak reductions, $18(\tw{\manifold}+1)$ is still an upper bound on the genus of any (possibly disconnected) level surface of $\pi' \circ f' \colon \manifold \rightarrow \|\nfforkcomp\|$. As $\manifold$ is irreducible, we may assume that no component of such a level surface is homeomorphic to a sphere (cf.\ \cite[p.\ 337]{lackenby2006heegaard}). It follows that the number of components of any level set of $\pi' \circ f'$ is at most $18(\tw{\manifold}+1)$.
\end{claimproof}

\begin{claimproof}[3.\ Perturbation and isotopy]
We now apply a level-preserving perturbation $\pi' \leadsto \pi''$ on $\pi'$, after which each thin level of $\heegaard'$ lies in different level sets of $\pi'' \circ f' \colon \manifold \rightarrow \|\nfforkcomp\|$ (\Cref{fig:thm-proof}(iv)). Since $\heegaard'$ is strongly irreducible, its thick levels are strongly irreducible (by definition), and its thin levels are incompressible (by \Cref{thm:scharlemann-thompson}). Moreover, all these surfaces have genera at most $18(\tw{\manifold}+1)$. Hence, as the JSJ decomposition of $\manifold$ is assumed to be $\delta$-complicated with $\delta=18(\tw{\manifold}+1)$, we may isotope all JSJ tori to be disjoint from all the thick and thin levels of $\heegaard'$. Then,  by invoking \cite[Corollary~4.5]{bachman2016heegaard} we can isotope each JSJ torus of $\manifold$ to \emph{coincide} with a component of some thin level of $\heegaard'$ (\Cref{fig:thm-proof}(v)). As a consequence, every compression body of the splitting $\heegaard'$ is contained in a unique JSJ piece of $\manifold$.
\end{claimproof}

\begin{figure}[ht]
	\centering
	\includegraphics[scale=.909091]{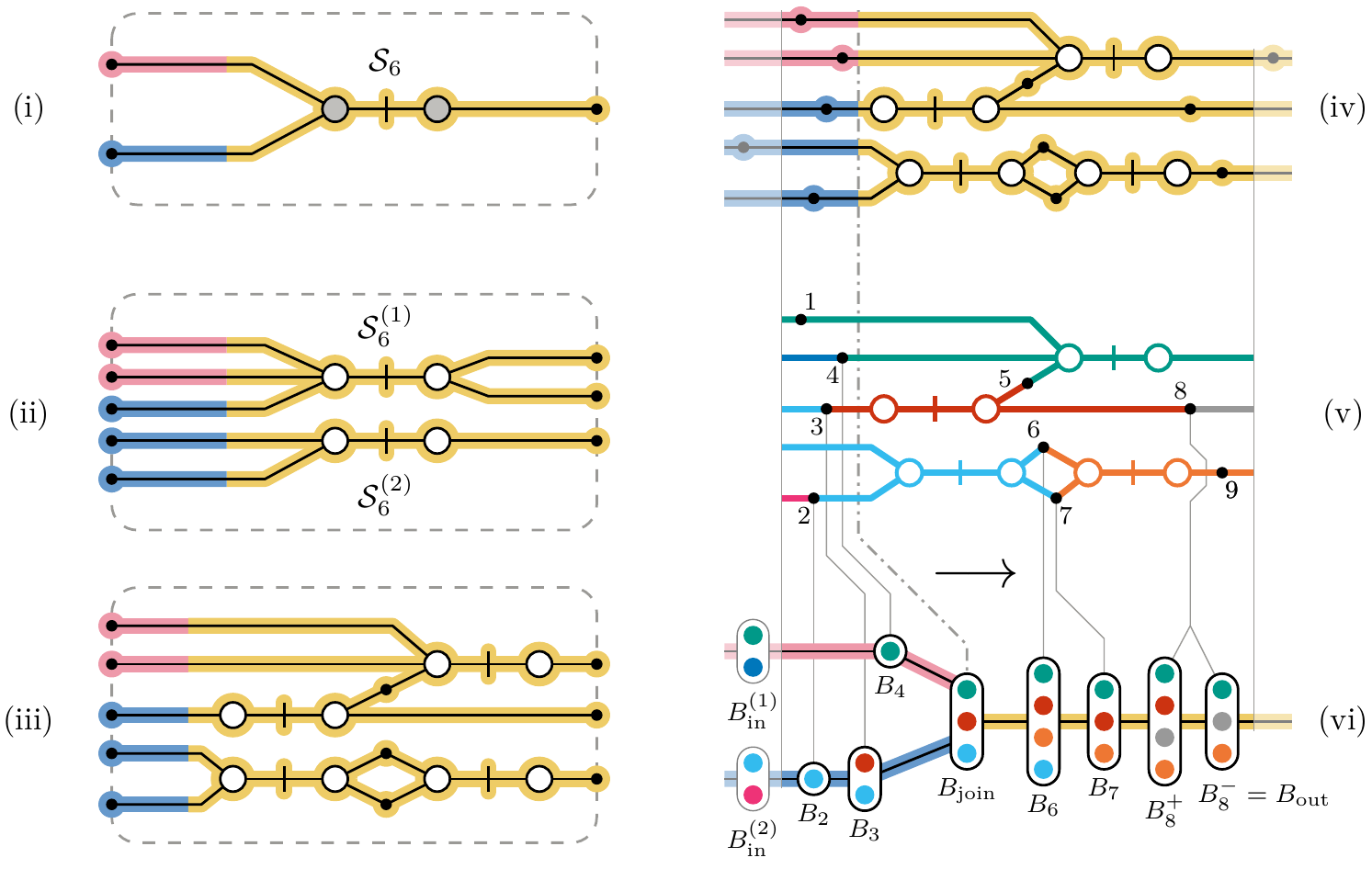}
	\caption{Overview of the proof of inequality (\ref{eq:jsj-tw}). The figure shows the circled area in \Cref{fig:tree-ghs}(ii). Stage 1: Construction of initial fork complex (i) and split into faithful fork complex (ii). Stage 2: Weak reductions (iii), see \cite[Proposition 5.2.3, Figures 5.8--5.13]{scharlemann2016lecture} for a complete list and their effects on the underlying fork complexes. Stage 3: perturbations and isotopy (iv). Stage 4: Construction of tree decomposition (v) and (vi).}
	\label{fig:thm-proof}
\end{figure}

\begin{claimproof}[4.\ The tree decomposition of $\dual(\jsjdecomp)$]
First note that every level set $(\pi'' \circ f')^{-1}(x)$ is incident to at most $18(\tw{\manifold}+1) +1 = 18\tw{\manifold}+19$ JSJ pieces of $\manifold$. The `plus one' appears, because if $(\pi'' \circ f')^{-1}(x)$ contains a JSJ torus, then this torus is incident to two JSJ pieces of $\manifold$. Also note that, because of the perturbation performed in the previous stage, each level set $(\pi'' \circ f')^{-1}(x)$ can contain at most one JSJ torus of $\manifold$.

We now construct a tree decomposition $(\mathscr{X},T)$ of $\dual(\jsjdecomp)$ of width $18(\tw{\manifold}+1)$. Eventually, $T$ will be a subdivision of $\nfforkcomp$ (which is a tree) with nodes corresponding to the bags in $\mathscr{X}$, which we now describe. By \cite[Section 6 (p.\ 86)]{huszar2019treewidth}, each leaf $l$ of $\nfforkcomp$ corresponds to a spine of a handlebody $\hbody_l$. We define a bag that contains the unique node of $\dual(\jsjdecomp)$ associated with the JSJ piece containing $\hbody_l$. As we sweep through $\|\nfforkcomp\|$ (cf.\ the arrows on \Cref{fig:tree-ghs}(2)), whenever we pass through a point $x \in \|\nfforkcomp\|$ such that the level set $(\pi'')^{-1}(x)$ contains a tine of $\|\fforkcomp'\|$, one of four possible events may occur (illustrated in \Cref{fig:thm-proof}(v)--(vi)):
\begin{enumerate*}
	\item A new JSJ piece appears. In this case we take a copy of the previous bag, and add the corresponding node of $\dual(\jsjdecomp)$ into the bag.\label{item:appears}
	\item A JSJ piece disappears. Then we delete the corresponding node of $\dual(\jsjdecomp)$ from a copy of the previous bag.\label{item:disappears}
	\item Both previous kinds of events happen simultaneously. In this case we introduce two new bags. The first to introduce the new JSJ piece, the second to delete the old one.\label{item:both}
	\item If neither a new JSJ piece is introduced, nor an old one is left behind, we do nothing.\label{item:nothing}
\end{enumerate*}
Whenever we arrive at a merging point in the sweep-out (i.e., a degree-three node of $\|\nfforkcomp\|$), we introduce a new bag, which is the union of the two previous bags. Note that the two previous bags do not necessarily need to be disjoint.
\end{claimproof}

It remains to verify that $(\mathscr{X},T)$ is, indeed, a tree decomposition of $\dual(\jsjdecomp)$ of width at most $18\tw{\manifold}+18$. \emph{Node coverage}: Every node of $\dual(\jsjdecomp)$ must be considered at least once, since we sweep through the entirety of $\|\nfforkcomp\|$. \emph{Arc coverage}: we must ensure that all pairs of nodes of $\dual(\jsjdecomp)$ with their JSJ pieces meeting at a tine are contained in some bag. This is always the case for \myref{item:appears}, because a new JSJ piece appears while all other JSJ pieces are still in the bag. It is also the case for \myref{item:disappears} because a JSJ piece disappears but the previous bag contained all the pieces. In \myref{item:both} a new JSJ piece appears and at the same time another JSJ piece disappears. However, in this case we first introduce the new piece, thus making sure that adjacent JSJ pieces always occur in at least one bag. \emph{Sub-tree property}: This follows from the fact that a JSJ piece, once removed from all bags, must be contained in the part of $\|\nfforkcomp\|$ that was already swept. 
Now, every JSJ piece incident to a given level set $(\pi'' \circ f')^{-1}(x)$ of the sweep-out must contribute a positive number to its genus. Hence, it follows that every bag can contain at most  $18\tw{\manifold}+19$ elements (with equality only possible where a tine simultaneously introduces and forgets a JSJ piece). This proves inequality \eqref{eq:jsj-tw}.
\end{proof}

\begin{proof}[Proof of inequality (\ref{eq:jsj-pw})]
We start with the results from \cite[Section 5]{huszar2019treewidth} yielding a fork-complex $\nfforkcomp$ whose underlying space $\|\nfforkcomp\|$ is a path, and the genus of the level sets of the associated sweep-out is bounded above by $4(3\pw{\manifold} + 1)$. Setting $\delta=4(3\pw{\manifold} + 1)$, the remainder of the proof is analogous with the proof of inequality \eqref{eq:jsj-tw}.
\end{proof}

\section{An Algorithmic Construction}
\label{sec:const}

Here we establish \Cref{thm:construction} that paves the way to the applications in \Cref{sec:appl}. In what follows, $\maxdeg$ denotes an arbitrary, but fixed, positive integer. Let $G=(V,E)$ be a graph with $|V|=n$ and maximum degree $\maxdeg$. \Cref{thm:construction} asserts that, in $\poly{n}$ time one can construct a triangulation $\tri_G$ of a closed, irreducible 3-manifold $\manifold_G$, such that the dual graph $\dual(\jsjdecomp)$ of its JSJ decomposition $\jsjdecomp$ equals $G$, moreover, the pathwidth (resp.\ treewidth) of $G$ determines the pathwidth (resp.\ treewidth) of $\manifold_G$ up to a constant factor.

Our proof of \Cref{thm:construction} rests on a synthesis of work by Lackenby \cite{lackenby2017conditionally} and by Bachman, Derby-Talbot and Sedgwick~\cite{bachman2017computing}. In \cite[Section 3]{lackenby2017conditionally} it is shown that the homeomorphism problem for closed 3-manifolds is at least as hard as the graph isomorphism problem. The proof relies on a simple construction that, given a graph $G$, produces a closed, orientable, triangulated 3-manifold whose JSJ decomposition $\jsjdecomp$ satisfies $\dual(\jsjdecomp) = G$. This gives the blueprint for our construction as well. In particular, we use the same building blocks that are described in \cite[p.\ 591]{lackenby2017conditionally}. However, as opposed to Lackenby, we paste together these building blocks via \emph{high-distance torus gluings} akin to the construction presented in \cite[Section~4]{bachman2017computing}. This ensures that we can apply \Cref{thm:jsj-width} for the resulting 3-manifold $\manifold_G$ and deduce the right-hand-side inequalities of \myref{eq:tw-inherit} and \myref{eq:pw-inherit}. The left-hand-side inequalities are shown by inspecting $\tri_G$. We now elaborate on the ingredients of the proof of \Cref{thm:construction}.

\subparagraph*{The building blocks} We first recall the definition of the building blocks from \cite[p.\ 591]{lackenby2017conditionally}. Let $k \in \mathbb{N}$ be a positive integer. Consider the 2-dimensional torus $\torus^2 = \nsphere{1}\times\nsphere{1}$, and let $\torus^2_k$ denote the compact surface obtained from $\torus^2$ by the removal of $k$ pairwise disjoint open disks. We define $\altmanifold(k) = \torus^2_k \times \nsphere{1}$. Note that the boundary $\partial\altmanifold(k)$ of $\altmanifold(k)$ consists of $k$ tori, which will be the gluing sites. See \Cref{fig:building-block} for the example of $\altmanifold(3)$.

We choose a triangulation $\tri(k)$ for $\altmanifold(k)$ that induces the minimal 2-triangle triangulation of the torus (cf.\ \Cref{fig:torus-trg}) at each boundary component. Note that $\tri(k)$ can be constructed from $O(k)$ tetrahedra. An explicit description of $\tri(k)$ is given in \Cref{app:trg}.

\bigskip

\begin{nolinenumbers}
\noindent
\begin{minipage}{0.425\textwidth}
	\centering
	\begin{overpic}[scale=0.75]{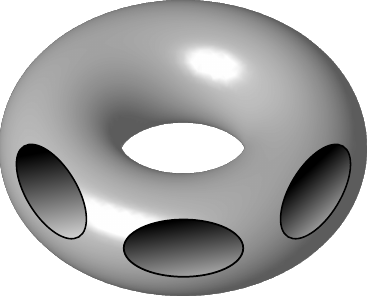}
		\put (-47,37) {\large{$\altmanifold(3) =$}}
		\put (106,37) {\large{$\times ~\nsphere{1}$}}
	\end{overpic}
	\captionof{figure}{Illustration of $\altmanifold(3)$.}
	\label{fig:building-block}
\end{minipage}\hfill%
\begin{minipage}{0.55\textwidth}
	\centering
	\begin{overpic}[scale=0.75]{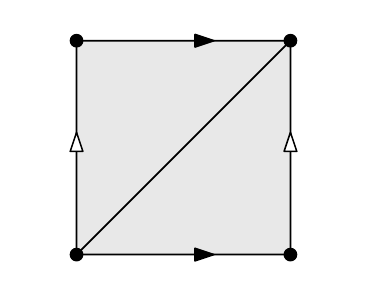}
	\end{overpic}
	\captionof{figure}{\begin{nolinenumbers}Minimal triangulation of the torus $\torus^2$.\end{nolinenumbers}}
	\label{fig:torus-trg}
\end{minipage}
\end{nolinenumbers}

\subparagraph*{Overview of the construction of $\boldsymbol{\manifold_G}$} We now give a high-level overview of constructing $\manifold_G$ from the above building blocks. Given a graph $G=(V,E)$, for each node $v \in V$ we pick a block $\altmanifold_v\cong\altmanifold(d_v)$, where $d_v$ denotes the degree of $v$. Next, for each arc $e = \{u,v\} \in E$, we pick a homeomorphism $\phi_e\colon\torus^2\rightarrow\torus^2$ of sufficiently high \emph{distance} (we discuss this notion below) and use it to glue together a boundary torus of $\altmanifold_u$ with one of $\altmanifold_v$. After performing all of these gluings, we readily obtain the 3-manifold $\manifold_G$ (cf.\ \Cref{fig:schematic-construction}(i)--(ii)). 

\begin{claim}[name={Based on {\cite[p.\ 591]{lackenby2017conditionally}}}, label=claim:jsj]
The JSJ decomposition $\jsjdecomp$ of $\manifold_G$ satisfies $\dual(\jsjdecomp) = G$.
\end{claim}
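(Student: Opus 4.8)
The plan is to verify the two defining properties of a JSJ decomposition for the proposed torus collection $\mathbf{T}$, namely that (i) each piece $\altmanifold_v \cong \altmanifold(d_v) = \torus^2_{d_v}\times\nsphere{1}$ is either Seifert fibered or atoroidal, and (ii) the tori in $\mathbf{T}$ — the gluing tori, one per arc of $G$ — are incompressible in $\manifold_G$ and form a \emph{minimal} such collection. Once both are established, uniqueness of the JSJ decomposition up to isotopy (cited as \cite[Theorem 1.9]{hatcher2007notes}) forces $\mathbf{T}$ to be the JSJ collection, and then by construction the dual graph $\dual(\jsjdecomp)$ has one node per block $\altmanifold_v$ (hence per node $v\in V$) and one arc per gluing torus (hence per arc $e\in E$), with the incidences matching those of $G$; that is, $\dual(\jsjdecomp)=G$.

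For property (i): each block $\altmanifold_v = \torus^2_{d_v}\times\nsphere{1}$ is a product of a surface with $\nsphere{1}$, hence manifestly Seifert fibered, with fibers the $\{\mathrm{pt}\}\times\nsphere{1}$ circles. (One should note $\torus^2_{d_v}$ has negative Euler characteristic once $d_v\geq 1$, and handle the degenerate small cases; if $G$ has an isolated node one may simply use $\altmanifold(0)=\torus^3$ or exclude that case.) So each piece is Seifert fibered and no further decomposition inside a piece is needed. For property (i) we also need $\manifold_G$ to be irreducible: each $\altmanifold_v$ is irreducible (a Seifert fibered space over a surface with boundary and with infinite $\pi_1$, in fact aspherical), and irreducibility is preserved under gluing along incompressible tori, which gives irreducibility of $\manifold_G$ once (ii) is in hand.

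For property (ii), incompressibility of the gluing tori is exactly where the \emph{high-distance} (``sufficiently complicated'') hypothesis on the gluing maps $\phi_e$ enters: a boundary torus of $\altmanifold_v$ is incompressible in $\altmanifold_v$ itself, and after gluing two blocks by a map of sufficiently high distance, no compressing disk can appear on either side — this is the standard mechanism (as in \cite{bachman2017computing}) by which high-distance gluings produce incompressible splitting surfaces, and it also rules out the glued-up manifold having an essential sphere. For \emph{minimality} one must check that no proper subcollection of $\mathbf{T}$ already cuts $\manifold_G$ into Seifert fibered or atoroidal pieces — equivalently, that no gluing torus is ``removable,'' i.e., that adjacent blocks do not fuse into a single Seifert fibered piece with compatible fibrations across the torus. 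This is again guaranteed by choosing $\phi_e$ of high enough distance: such a map does not match up the Seifert fiberings of the two sides (indeed, it does not even match up the boundary slopes of the fibers), so the union of two adjacent blocks across a gluing torus is not Seifert fibered, and the torus cannot be discarded. One should also confirm that no block $\altmanifold_v$, once its boundary tori are reglued, becomes part of a larger atoroidal piece — but the pieces here are all (large) Seifert fibered spaces, and a Seifert fibered piece adjacent to another across an incompressible torus remains a distinct JSJ piece unless the fiberings are compatible, which the distance hypothesis prevents.

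The main obstacle I anticipate is pinning down the precise distance threshold and the clean statement that a high-distance torus gluing (a) keeps the gluing torus incompressible, (b) keeps $\manifold_G$ irreducible, and (c) does not create a compatible fibering across the torus — and then assembling these local facts into the global minimality statement, since one must simultaneously ensure that \emph{every} subcollection fails, not just that each individual torus is non-removable. In practice this is handled by quoting the relevant results of Bachman--Derby-Talbot--Sedgwick \cite{bachman2017computing} (and the underlying facts about JSJ decompositions and Seifert fibered spaces, e.g.\ from \cite{hatcher2007notes}), so the ``proof'' is really a matter of citing Lackenby's construction \cite[p.\ 591]{lackenby2017conditionally} for the combinatorial skeleton and invoking the high-distance machinery to certify that the skeleton is exactly the JSJ decomposition; I would keep the write-up at the level of this citation-driven verification rather than reproving the topological inputs.
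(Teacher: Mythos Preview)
Your outline follows essentially the same route as the paper, but you have misidentified where the distance hypothesis actually enters, and this is worth correcting.

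Incompressibility of the gluing tori and irreducibility of $\manifold_G$ require \emph{no} hypothesis on the gluing maps. Each block $\altmanifold(k)=\torus^2_k\times\nsphere{1}$ is irreducible with incompressible boundary (the paper deduces this from its universal cover being $\mathbb{R}^3$ together with the characterization of compressible tori in irreducible $3$-manifolds), and a standard innermost-disk argument then shows that gluing such pieces along their boundary tori yields an irreducible manifold in which those tori remain incompressible---regardless of which homeomorphism is used. So your sentence ``incompressibility of the gluing tori is exactly where the high-distance hypothesis enters'' is wrong, and as written your argument for irreducibility is also slightly circular (you invoke incompressibility in $\manifold_G$ to get irreducibility, but your stated reason for incompressibility in $\manifold_G$ is the distance hypothesis rather than incompressibility in each block).

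The only place the gluing maps matter is \emph{minimality}, and there the requirement is mild: one needs that adjacent blocks do not merge into a single Seifert fibered piece, i.e.\ that the gluing does not match the induced boundary fiberings. The paper makes this precise by first noting (via the classification of exceptional Seifert fibered spaces) that each $\altmanifold(k)$ admits a \emph{unique} Seifert fibering, so each boundary torus carries a well-defined fiber slope; then any torus map of distance at least~$1$ fails to align these slopes. No ``sufficiently high'' threshold is needed here---high distance is used elsewhere in the paper (for $\delta$-complicatedness in Theorem~\ref{thm:jsj-width}), not for Claim~\ref{claim:jsj}. You should also state the uniqueness of the fibering explicitly, since without it ``the fiberings don't match'' is ambiguous.
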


For a proof of \Cref{claim:jsj} we refer to \Cref{app:jsj}.

\begin{figure}[ht]
\centering
\begin{overpic}[scale=\MyFigScaleHuge]{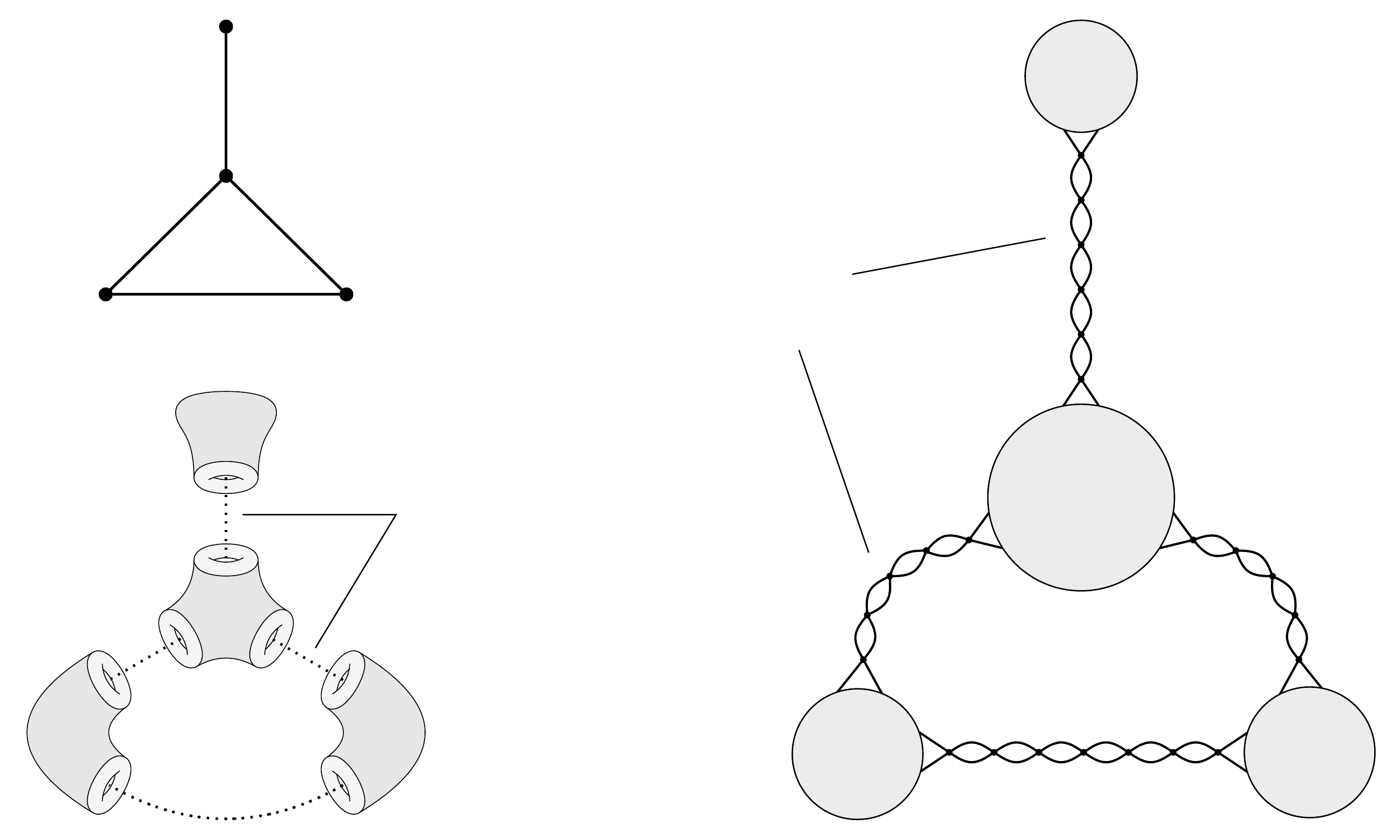}
	\put (30,24.5) {\small{high-distance}}
	\put (30,21) {\small{torus gluings}}
	\put (51,40.5) {\small{layered}}
	\put (51,37) {\small{triangulations}}
	\put (14.5,-4) {\large{$\manifold_G$}}
	\put (74,-4) {\large{$\dual(\tri_G)$}}
	\put (-1.5,19) {\small{$\altmanifold(3) \cong \altmanifold_v$}}
	\put (-1.5,28) {\small{$\altmanifold(1) \cong \altmanifold_u$}}
	\put (-1,1) {\small{$\altmanifold_w$}}
	\put (28.75,1) {\small{$\altmanifold_z \cong \altmanifold(2)$}}
	\put (74,54) {\scalebox{0.7}{{$\dual(\tri(1))$}}}
	\put (73,23.75) {\small{$\dual(\tri(3))$}}
	\put (57.5,5.45) {\scalebox{0.8}{{$\dual(\tri(2))$}}}
	\put (89.8,5.45) {\scalebox{0.8}{{$\dual(\tri(2))$}}}
	\put (20,52) {\large{$G$}}
	\put (12.5,58) {\small{$u$}}
	\put (12.5,48) {\small{$v$}}
	\put (3.5,38.5) {\small{$w$}}
	\put (27,38.5) {\small{$z$}}
	\put (-8,50) {\small{(i)}}
	\put (-8,15) {\small{(ii)}}
	\put (92,50) {\small{(iii)}}
\end{overpic}

\vspace{21pt}

\caption{Schematic overview of the construction underlying \Cref{thm:construction}.}
\label{fig:schematic-construction}
\end{figure}

\subparagraph*{High-distance torus gluings} As already mentioned, to ensure that we can apply \Cref{thm:jsj-width} to $\manifold_G$, we use torus homeomorphisms of ``sufficiently high distance'' to glue the building blocks together. This notion of \emph{distance}, which is defined through the \emph{Farey distance}, is somewhat technical. Thus, we refer to \cite[Section 4.1 and Appendix]{bachman2017computing} for details, and rather recall a crucial result that makes the usefulness of distance in the current context apparent.

\begin{theorem}[{\cite[Appendix]{bachman2017computing}, \cite[Theorem 5.4]{bachman2013stabilizing}}\protect\footnote{The notation and the statement of \Cref{thm:distance} have been adapted to match the present context.}] There exists a computable constant $K$, depending only on the homeomorphism types of the blocks, so that if any set of blocks are glued with maps of distance at least $K \delta$ along their torus boundary components to form a closed $3$-manifold $\manifold$, then the JSJ decomposition of $\manifold$ is $\delta$-complicated (cf.\ \Cref{def:complicated-jsj}).
\label{thm:distance}
\end{theorem}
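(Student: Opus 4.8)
The plan is to derive \Cref{thm:distance} from \cite[Theorem~5.4]{bachman2013stabilizing} together with \cite[Appendix]{bachman2017computing}, once we have checked that our situation is a special case of theirs. First I would record the dictionary. By \Cref{claim:jsj}, the JSJ tori of the closed manifold $\manifold$ assembled from the blocks are exactly the tori along which the blocks are glued, so the property of being \emph{$\delta$-complicated} in the sense of \Cref{def:complicated-jsj} is verbatim the conclusion of the cited results: every incompressible surface and every strongly irreducible Heegaard surface in $\manifold$ of genus at most $\delta$ can be isotoped off all of the gluing tori. Since the blocks occurring in our construction form a \emph{fixed finite} family of compact, orientable, irreducible $3$-manifolds with torus boundary --- the pieces $\altmanifold(k)$ with $k \le \maxdeg$ --- the constant $K$ produced by those results depends only on $\maxdeg$ and is computable, which is precisely the assertion of \Cref{thm:distance}. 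Thus the task reduces to recalling why the cited theorem holds.

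For the mechanism I would argue by contraposition. Suppose some incompressible or strongly irreducible Heegaard surface $\surface \subset \manifold$ with $g(\surface) \le \delta$ cannot be isotoped to be disjoint from all the JSJ tori. Isotope $\surface$ so that its intersection with the union $\mathbf{T}$ of all gluing tori is minimal; by assumption it is nonempty, so $\surface$ essentially meets some gluing torus $T_e$, joining blocks $\altmanifold_u$ and $\altmanifold_v$. Minimality and irreducibility of the blocks, together with --- in the incompressible case --- the standard innermost-disk/outermost-arc surgeries, or --- in the Heegaard case --- the analysis of how a strongly irreducible splitting surface meets an incompressible surface (Scharlemann's lemmas; cf.\ \Cref{thm:scharlemann-thompson} and the thin-position machinery of \Cref{sec:gen}), imply that $\surface \cap T_e$ is a nonempty family of parallel essential curves of a single slope $\gamma$, and that the pieces $\surface \cap \altmanifold_u$ and $\surface \cap \altmanifold_v$ are essential subsurfaces of the respective blocks with boundary slope $\gamma$ (in the Heegaard case, after isotopy, surfaces sufficiently controlled for the argument below, in the sense of \cite{bachman2017computing}). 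Now one invokes the curve-complex estimate underlying the notion of \emph{distance} of \cite[Section~4.1 and Appendix]{bachman2017computing}: since each $\altmanifold(k) = \torus^2_k \times \nsphere{1}$ is Seifert fibered with an explicit, canonical fiber slope on every boundary torus, there is a computable constant $c$, depending only on the homeomorphism types of the blocks, such that the boundary slope on a torus of a bounded-genus essential piece lies within Farey distance $c \cdot (g(\surface)+1)$ of the block's reference slope(s) on that torus. Reading $\gamma$ from both sides of $T_e$ and applying the triangle inequality in the Farey graph, the distance of the gluing $\phi_e$ --- which by definition measures how far $\phi_e$ carries the reference data of $\altmanifold_u$ from that of $\altmanifold_v$ --- is at most $2c \cdot (g(\surface)+1) \le 2c \cdot (\delta+1)$. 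Taking $K$ to be any integer exceeding $4c$ then contradicts $d(\phi_e) \ge K\delta$ (for $\delta \ge 1$), so no such $\surface$ exists and the JSJ decomposition of $\manifold$ is $\delta$-complicated.

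I expect the main obstacle to be the strongly irreducible Heegaard case of the middle paragraph: for an incompressible $\surface$, placing it in minimal position with $\mathbf{T}$ and reading off essential pieces with a well-defined boundary slope is routine, whereas a strongly irreducible splitting surface need not meet $T_e$ in essential curves a priori, nor need its complementary pieces be incompressible. Controlling this is exactly the content of \cite[Theorem~5.4]{bachman2013stabilizing} and the sweep-out arguments behind it: a strongly irreducible splitting can be isotoped to meet the incompressible torus $T_e$ in curves essential on both, with complementary pieces that compress to at most one side --- still enough to pin down $\gamma$. A secondary point demanding care is effectivity: one must verify that the relevant boundary slopes of bounded-genus essential surfaces in the Seifert pieces $\altmanifold(k)$, and hence the constant $c$, can genuinely be computed from the homeomorphism type; this is carried out in the appendix of \cite{bachman2017computing}, and since only finitely many block types (those with at most $\maxdeg$ boundary tori) arise in our construction, a single computable $K = K(\maxdeg)$ works throughout.
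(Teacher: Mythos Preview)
The paper does not prove this theorem at all: it is stated purely as a citation of \cite[Appendix]{bachman2017computing} and \cite[Theorem~5.4]{bachman2013stabilizing}, and then used as a black box in the proof of \Cref{thm:construction}. Your proposal correctly recognizes this in the first paragraph---that the task ``reduces to recalling why the cited theorem holds''---and your dictionary matching \Cref{def:complicated-jsj} to the conclusion of the cited results is accurate.

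Where you go beyond the paper is in your second and third paragraphs, which sketch the mechanism behind the cited results. This sketch is broadly faithful to the arguments in \cite{bachman2013stabilizing,bachman2017computing}: the contrapositive setup, the reduction to a single essential slope $\gamma$ on a gluing torus, the Farey-distance bound from each side, and the triangle inequality are indeed the skeleton of the proof. You are also right to flag the strongly irreducible Heegaard case as the delicate point, and to note that effectivity of $K$ is handled in the appendix of \cite{bachman2017computing}. One minor caution: your invocation of \Cref{claim:jsj} in the first paragraph is slightly circular in the context of the paper, since \Cref{claim:jsj} is stated for the specific manifold $\manifold_G$ of \Cref{sec:const}, whereas \Cref{thm:distance} is phrased for an arbitrary assembly of blocks; but the underlying fact (that high-distance torus gluings of Seifert pieces yield the JSJ tori) is standard and is what the cited works assume anyway.
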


\subparagraph*{Triangulating the gluing maps} We have already discussed that the block $\altmanifold(k)$ admits a triangulation $\tri(k)$ with $O(k)$ tetrahedra, where $\tri(k)$ induces a minimal, 1-vertex triangulation at each torus boundary of $\altmanifold(k)$. It is shown in \cite[Section 4.2]{bachman2017computing} that the gluings beading these blocks together can be realized as \emph{layered triangulations} \cite{jaco2006layered}. These triangulations manifest as ``daisy chains'' in the dual graph $\dual(\tri_G)$ of the final triangulation $\tri_G$, see \Cref{fig:schematic-construction}(iii).

\begin{lemma}[{\cite[Lemma 4.6]{bachman2017computing}}]
\label{lem:layering}
There exist torus gluings with distance at least $D$, that can be realized as layered triangulations using $2D$ tetrahedra.
\end{lemma}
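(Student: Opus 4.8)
The plan is to make the existence statement explicit: for each $D$ I would exhibit a single gluing homeomorphism $\phi_e\colon\torus^2\to\torus^2$ identifying two boundary tori (of two of the building blocks $\altmanifold(k)$), realize it by a \emph{greedy} layered triangulation of the region between them using exactly $2D$ tetrahedra, and then read off from the combinatorics that its distance is at least $D$. First I would recall the standard dictionary between layered triangulations and the Farey tessellation $\mathcal{F}$ (with vertex set $\mathbb{Q}\cup\{\infty\}$ and an edge between $p/q$ and $r/s$ whenever $|ps-qr|=1$); see \cite{jaco2006layered}. The minimal, $2$-triangle triangulations of $\torus^2$ are exactly the ideal triangles of $\mathcal{F}$, i.e.\ unordered triples of pairwise Farey-adjacent slopes, and layering a single tetrahedron across an edge of the current triangulation performs a \emph{flip}: it replaces the current ideal triangle by the one adjacent to it across a common edge --- one step in the trivalent tree whose nodes are the ideal triangles of $\mathcal{F}$. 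Hence a layered triangulation built from $m$ tetrahedra on a copy of the minimal triangulation traces out a length-$m$ path of ideal triangles; interpolating with it between the $2$-triangle triangulations that $\tri(k_1)$ and $\tri(k_2)$ induce on the two boundary tori to be glued yields a gluing $\phi_e$, determined by this Farey-tree path up to the finite automorphism group of the minimal torus triangulation.

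Next I would take the path to be the \emph{Fibonacci-type} geodesic of this tree: at each step flip across the edge not containing the slope introduced at the previous step (equivalently, always discard the ``oldest'' surviving slope). Listing the slopes in order of appearance as $s_1,s_2,s_3,s_4,\dots$, with $\{s_1,s_2,s_3\}$ the initial triangle, one checks that each new slope $s_k$ is Farey-adjacent to $s_{k-1}$ and to $s_{k-2}$ but to no $s_j$ with $j\le k-3$; hence one may ``skip by two'' in the Farey graph and this is optimal, so $d_{\mathcal{F}}(s_1,s_{m+3})=\lceil (m+2)/2\rceil$, which for $m=2D$ equals $D+1$. This elementary computation is precisely what forces the factor $2$ between the tetrahedron count and the distance: consecutive ideal triangles along the path share two of their three slopes, so each flip can move the ``far'' slope only about half a Farey step away from the starting triangle. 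Finally, I would align the data so that the Seifert fiber slope of the first block on the glued torus is the edge $s_1$ (the one dropped first --- this only dictates which edge we flip first and, if need be, our choice of the minimal triangulation $\tri(k)$), and, using the finitely many identifications of the terminal triangle with the other boundary triangulation, so that the fiber slope of the second block is the edge $s_{m+3}$.

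It then remains to invoke the definition of the distance of a torus gluing from \cite[Section~4.1]{bachman2017computing}: for two Seifert fibered pieces glued along a torus --- and the boundary tori of $\altmanifold(k)=\torus^2_k\times\nsphere{1}$ carry no compressing disks and have a unique (fiber) slope singled out by the fibration --- this distance is, up to a universal additive constant depending only on the block types, the Farey distance between the two fiber slopes after identification by $\phi_e$, and with the alignment above it is exactly that. Combining, the gluing realized by our $2D$ layered tetrahedra has distance $d_{\mathcal{F}}(s_1,s_{2D+3})=D+1\ge D$, proving the lemma; any additive slack in the relation between fiber slopes and triangulation edges would cost only a constant, absorbed by enlarging $D$ by $O(1)$, which is still $O(D)$ tetrahedra.

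The genuine obstacle is exactly this last step --- unwinding \cite{bachman2017computing}'s definition of the distance of a torus gluing (stated there through subsets of the Farey/curve complex of the torus) and verifying that for our product blocks it really is coarsely the Farey distance between the fiber slopes of the two adjacent pieces. Everything else --- the layering dictionary, the count of $2D$ tetrahedra, the skip-by-two estimate $d_{\mathcal{F}}(s_1,s_{m+3})=\lceil (m+2)/2\rceil$, and compatibility of these gluings with the construction of $\manifold_G$ (hence with \Cref{thm:distance}) --- is elementary or routine bookkeeping.
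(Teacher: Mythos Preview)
The paper does not prove this lemma; it is quoted verbatim from \cite[Lemma~4.6]{bachman2017computing} and used as a black box in the proof of \Cref{thm:construction}. There is therefore no proof in the present paper to compare your argument against.

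That said, your sketch is essentially the argument of \cite{bachman2017computing}: the dictionary between layered triangulations and paths in the dual tree of the Farey tessellation, a geodesic flip sequence, and the observation that the Farey distance between the extremal slopes grows at roughly half the rate of the flip count. You also correctly isolate the one genuinely delicate step, namely matching the quantity you compute to the specific definition of ``distance of a torus gluing'' in \cite[Section~4.1 and Appendix]{bachman2017computing}.

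One actual gap worth noting: for the lemma you need a \emph{lower} bound on the distance, hence a lower bound on $d_{\mathcal F}(s_1,s_{m+3})$. Your skip-by-two path only gives the upper bound $d_{\mathcal F}(s_1,s_{m+3})\le \lceil (m+2)/2\rceil$, and the non-adjacency of $s_k$ to $s_j$ for $j\le k-3$ does not by itself rule out a shorter Farey path detouring through slopes outside the sequence $s_1,\dots,s_{m+3}$. What is needed is the separation property of the Farey tessellation: for every Farey edge $\{a,b\}$, removing the two vertices $a,b$ disconnects the Farey graph, so any Farey path from $s_1$ to $s_{m+3}$ must meet an endpoint of each of the $m$ edges $\{s_k,s_{k+1}\}$ ($2\le k\le m+1$) shared by consecutive triangles along your flip geodesic. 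Since a single vertex can cover at most two consecutive such edges, this yields the required lower bound. With that addition your argument is complete.
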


\begin{claim}
\label{claim:lhs-inequalities}
There exist universal constants $c,c' > 0$ such that
\vspace{-.5\topsep}
\begin{align*}
	(c / \maxdeg) \tw{\manifold_G} \leq \tw{G} \quad \text{and} \quad (c' / \maxdeg) \pw{\manifold_G} \leq \pw{G}.
\end{align*}
\end{claim}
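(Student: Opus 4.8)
The plan is to exhibit, for each graph $G$ with maximum degree $\maxdeg$, a triangulation of $\manifold_G$ whose dual graph has treewidth (resp.\ pathwidth) bounded above by a constant multiple of $\maxdeg\cdot\tw{G}$ (resp.\ $\maxdeg\cdot\pw{G}$); this is exactly what is needed, since $\tw{\manifold_G}$ and $\pw{\manifold_G}$ are by definition the minima over all triangulations. The natural candidate is the triangulation $\tri_G$ produced by the construction itself: it is assembled from the blocks $\tri(d_v)$ (one per node $v\in V(G)$, each with $O(d_v)=O(\maxdeg)$ tetrahedra) together with the layered triangulations realizing the high-distance gluing maps along the arcs of $G$ (each a ``daisy chain'' in the dual graph, contributing $O(D)$ tetrahedra for an arc, where $D$ is the chosen distance). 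The key observation is that, combinatorially, $\dual(\tri_G)$ is obtained from $G$ by \emph{blowing up} each node into a bounded-treewidth (indeed bounded-size, $O(\maxdeg)$-vertex) graph and each arc into a path (the daisy chain). This operation does not increase treewidth or pathwidth by more than a factor depending on $\maxdeg$.

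Concretely, the steps I would carry out are: (1)~Take an optimal tree decomposition $(\mathscr X,\alttree)$ of $G$ of width $\tw{G}$ (resp.\ an optimal path decomposition of width $\pw{G}$). (2)~Build a tree decomposition of $\dual(\tri_G)$ over the same tree $\alttree$ (suitably subdivided to route the daisy-chain paths): replace each node $v\in V(G)$ appearing in a bag by the full set of $O(\maxdeg)$ dual-graph nodes coming from $\tri(d_v)$, and for each arc $e=\{u,v\}$ thread the path of dual nodes coming from the layered triangulation of $\phi_e$ along a path in $\alttree$ joining a bag containing $u$ to a bag containing $v$ (both endpoints exist and, by the sub-tree property applied to $u$ and to $v$, can be joined within the union of bags containing $u$ together with those containing $v$). (3)~Bound the bag sizes: a bag $B_i$ of $(\mathscr X,\alttree)$ has $|B_i|\le\tw{G}+1$ nodes, each contributing $O(\maxdeg)$ dual nodes, and the daisy-chain routing through any given point of $\alttree$ adds only a bounded number of extra dual nodes per arc of $G$ passing through, of which there are $O(\maxdeg\cdot\tw{G})$ at most (each bag touches $\le\tw{G}+1$ nodes, each of degree $\le\maxdeg$). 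Hence every bag of the new decomposition has $O(\maxdeg\cdot\tw{G})$ elements, giving $\tw{\dual(\tri_G)}=O(\maxdeg\cdot\tw{G})$, and the identical argument with a path decomposition gives $\pw{\dual(\tri_G)}=O(\maxdeg\cdot\pw{G})$. Since $\tw{\manifold_G}\le\tw{\dual(\tri_G)}$ and $\pw{\manifold_G}\le\pw{\dual(\tri_G)}$, rearranging yields the claimed inequalities with universal constants $c,c'>0$.

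The main obstacle I anticipate is bookkeeping rather than conceptual: one has to route the daisy chains through the (subdivided) decomposition tree without blowing up any bag, which requires care in choosing, for each arc $e=\{u,v\}$ of $G$, the path in $\alttree$ along which the layered triangulation's dual nodes are introduced, and in checking the sub-tree (interval) property for those nodes. This is a standard ``subdivide the tree decomposition and route each edge along a bounded-length path'' manoeuvre, and the bounded degree of $G$ is exactly what keeps the number of simultaneously active daisy chains — and hence the bag-size overhead — under control. A minor additional point is that the internal triangulation $\tri(d_v)$ of each block has $O(\maxdeg)$ tetrahedra, so its dual graph has $O(\maxdeg)$ nodes and can simply be dumped wholesale into any bag containing $v$; no finer analysis of its structure is needed.
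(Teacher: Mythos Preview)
Your proposal is correct and follows essentially the same approach as the paper: both arguments recognize that $\dual(\tri_G)$ is obtained from $G$ by blowing up each node into an $O(\maxdeg)$-vertex graph and replacing each arc by a path (the daisy chain), and that these operations inflate treewidth and pathwidth by at most a factor of $O(\maxdeg)$. The only difference is presentational: where you build the tree decomposition of $\dual(\tri_G)$ in one pass (routing the daisy chains explicitly), the paper decouples the two operations and dispatches the arc-subdivision step by invoking the folklore fact (Lemma~\ref{lem:subdiv-pw}) that subdividing arcs changes treewidth and pathwidth by at most an additive constant---this sidesteps your routing bookkeeping entirely.
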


\begin{claimproof}
Since every node of $G$ has degree at most $\maxdeg$, the construction of $\manifold_G$ only uses building blocks homeomorphic to $\altmanifold(1),\ldots,\altmanifold(\maxdeg)$. Hence, for each $v \in V$ the triangulation $\tri(d_v)$ of the block $\altmanifold_v$ contains $O(\maxdeg)$ tetrahedra. This, together with the above discussion on triangulating the gluing maps implies that (upon ignoring multi-arcs and loop arcs, which are anyway not ``sensed'' by treewidth or pathwidth), the dual graph $\dual(\tri_G)$ is obtained from $G$ by \begin{enumerate*}
	\item replacing each node $v \in V$ with a copy of the graph $\dual(\tri(d_v))$ that contains $O(\Delta)$ nodes, and by
	\item possibly subdividing each arc $e \in E$ several times.
\end{enumerate*}

Now, the first operation increases the treewidth (resp.\ pathwidth) at most by a factor of $O(\maxdeg)$, while the arc-subdivisions keep these parameters basically the same, cf.\ \Cref{lem:subdiv-pw}. Hence $\tw{\dual(\tri_G)} \leq O(\maxdeg\tw{G})$ and $\pw{\dual(\tri_G)} \leq O(\maxdeg\pw{G})$, and the claim follows.
\end{claimproof}

\begin{lemma}[Folklore, cf.\ {\cite[Lemma A. 1]{belmonte2022parameterized}}]
\label{lem:subdiv-pw} Let $G=(V,E)$ be a graph. If $G'$  is a graph obtained from $G$ by subdividing a set $F \subseteq E$ of arcs an arbitrary number of times. Then
\begin{align*}
	\pw{G'} \leq \pw{G}+2 \qquad \text{and} \qquad \tw{G'} \leq \max\{\tw{G},3\}.
\end{align*}
 \end{lemma}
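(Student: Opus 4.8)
\textbf{Proof proposal for Lemma~\ref{lem:subdiv-pw}.}
The plan is to prove both inequalities by taking an optimal path decomposition (resp.\ tree decomposition) of $G$ and locally modifying it to accommodate the subdivision nodes, adding at most a constant number of extra vertices to each bag. First I would reduce to the case of a single subdivision: if $G'$ is obtained by subdividing an arbitrary set $F\subseteq E$ an arbitrary number of times, then $G'$ is obtained from $G$ by a (possibly infinite, but for a finite graph, finite) sequence of \emph{single} edge subdivisions, so it suffices to show that subdividing one arc once raises the pathwidth by at most $2$ and keeps the treewidth at $\max\{\tw{G},3\}$; one then has to check that iterating does not accumulate error, which is the point of phrasing the bound as an absolute bound ($\pw{G}+2$, not $\pw{G}+2|F|$) rather than an incremental one.

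For the single-subdivision step, suppose we subdivide the arc $e=\{u,v\}$ by inserting a new node $w$, so that $e$ is replaced by $\{u,w\}$ and $\{w,v\}$. Take an optimal path decomposition $(\mathscr{X},P)$ of $G$. By arc coverage there is a bag $B_i$ with $\{u,v\}\subseteq B_i$. I would insert into the path, immediately after $B_i$, a new bag $B_i\cup\{w\}$ (and duplicate $B_i$ on either side if needed to keep things legal); this new bag has size at most $|B_i|+1$, it covers both new arcs $\{u,w\}$ and $\{w,v\}$, and since $w$ occurs in exactly this one bag the sub-path (sub-tree) property is preserved for $w$ and unaffected for every old node. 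Hence the width increases by at most $1$ from this insertion. The ``$+2$'' rather than ``$+1$'' is the safety margin that makes the bound \emph{stable} under iteration: when we subsequently subdivide an arc of $G'$, its endpoints may already be ``new'' nodes sitting in a width-$(\pw{G}+1)$ bag, and one further insertion could momentarily reach width $\pw{G}+2$ — but crucially never more, because each newly created degree-$2$ node lives in a single bag and can be ``absorbed'' without ever forcing two distinct subdivision nodes of the same original arc to coexist in a bag (process the subdivision points of a given arc in order along the path from $u$'s side). The treewidth bound is the same argument with a path replaced by a tree, except that one must observe separately that a single arc can always be realised with treewidth $\le 2$ (a path has treewidth $1$), which is why the bound reads $\max\{\tw{G},3\}$: if $\tw{G}\le 2$ the subdivided graph could still be a subdivision of $K_4$ or similar with treewidth $3$, but it can never exceed $3$ when $\tw{G}\le 3$, and for $\tw{G}\ge 3$ the insertion argument keeps us at $\tw{G}$.

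The main obstacle I anticipate is bookkeeping the iteration cleanly: one must argue that after inserting many subdivision nodes for the \emph{same} original arc $e$, the bags never need to hold two of them at once, so that the ``$+2$'' buffer genuinely suffices globally rather than growing with the number of subdivisions. The clean way to handle this is to fix, for each arc $e\in F$ subdivided into a path $u=w_0,w_1,\dots,w_k=v$, a single bag $B_{i(e)}$ originally containing $\{u,v\}$, and to replace that one bag by a short \emph{segment} of bags $B_{i(e)}\cup\{w_1\},\,B_{i(e)}\cup\{w_2\},\dots$ realising the path $w_0w_1\cdots w_k$ internally — a path on $k+1$ vertices has pathwidth $1$, so threading it through a single original bag costs only $+2$ in width (one for the ``current'' $w_j$, one transient overlap), independent of $k$. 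Doing this independently for distinct arcs $e$ is harmless because they use disjoint new vertices and (after duplicating bags) disjoint segments of $P$. I would then cite \cite[Lemma~A.1]{belmonte2022parameterized} for the precise constants and for the tree case, since the statement is folklore, and keep the write-up to the path-decomposition picture above with a one-line remark that the tree case is verbatim the same with the extra observation that subdividing cannot push treewidth above $3$ when it started at most $3$.
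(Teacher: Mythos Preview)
The paper does not actually prove this lemma: it is stated as folklore with a pointer to \cite[Lemma~A.1]{belmonte2022parameterized}, and the text immediately moves on to finishing the proof of \Cref{thm:construction}. So there is no ``paper's own proof'' to compare your proposal against; I can only comment on the proposal itself.

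Your final construction for pathwidth is the right one: fix, for each subdivided arc $e=\{u,v\}$ with subdivision path $u=w_0,w_1,\dots,w_k=v$, a bag $B$ containing $\{u,v\}$, and replace $B$ in the path by the segment
\[
B,\ B\cup\{w_1\},\ B\cup\{w_1,w_2\},\ B\cup\{w_2,w_3\},\ \dots,\ B\cup\{w_{k-2},w_{k-1}\},\ B\cup\{w_{k-1}\},\ B.
\]
Each new bag has size at most $|B|+2$, all new arcs $\{w_j,w_{j+1}\}$ are covered, and the sub-path property is immediate. This directly gives $\pw{G'}\le\pw{G}+2$, independent of $k$ and of how many arcs are subdivided. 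Your earlier attempt to reach $+2$ by ``iterating'' the $+1$ single-subdivision step is, however, not a valid reduction: iterating a $+1$ bound would a priori accumulate, and the paragraph explaining the ``safety margin'' does not fix this. You should drop that discussion and go straight to the segment construction above.

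Your treewidth explanation contains an error. You write that ``if $\tw{G}\le 2$ the subdivided graph could still be a subdivision of $K_4$''; this is impossible, since $\tw{G}\le 2$ means $G$ is $K_4$-minor-free, hence $G$ contains no $K_4$ subgraph to subdivide, and subdivision creates no new minors. The correct reason the treewidth bound needs only a small additive constant is structural: in a \emph{tree} decomposition you may attach a pendant path of bags at the node $B$, rather than inserting a segment in line, so you do not have to carry all of $B$ along. Concretely, attach to $B$ the path of bags
\[
\{u,v,w_1\},\ \{v,w_1,w_2\},\ \{v,w_2,w_3\},\ \dots,\ \{v,w_{k-2},w_{k-1}\},
\]
each of size $3$. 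This already yields $\tw{G'}\le\max\{\tw{G},2\}$, which is stronger than the stated $\max\{\tw{G},3\}$; either bound suffices for the application in \Cref{claim:lhs-inequalities}.
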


\begin{proof}[Finishing the proof of \Cref{thm:construction}]
We have already shown \myref{thm:construction-jsj} (\Cref{claim:jsj}) and the left-hand-sides of the inequalities  \myref{eq:tw-inherit} and \myref{eq:pw-inherit} (\Cref{claim:lhs-inequalities}). To prove the remaining parts of \Cref{thm:construction}, let $\delta = \max\{18(\tw{G} + 1),4(3\pw{G} + 1)\} = O(\pw{G})$. By \Cref{thm:distance}, there is a computable constant $K_\maxdeg$ depending only on $\altmanifold(1),\ldots,\altmanifold(\maxdeg)$ and hence only on $\maxdeg$, so that if we glue together the blocks via maps of distance at least $K_\maxdeg\delta$, then the JSJ decomposition of $\manifold_G$ is $\delta$-complicated. By \Cref{lem:layering}, each such gluing map can be realized as a layered triangulation consisting of $2K_\maxdeg\delta$ tetrahedra. Since $G$ has at most $\maxdeg n/2$ arcs, these layered triangulations contain at most $2K_\maxdeg\delta\maxdeg n/2 = \maxdeg K_\maxdeg\delta n = O(\maxdeg K_\maxdeg\pw{G} \cdot n) = O_\maxdeg(\pw{G} \cdot n)$ tetrahedra altogether. Since the triangulated blocks $\tri(d_v)$ contain $O(\maxdeg\cdot n)$ tetrahedra in total, the triangulation $\tri_G$ of the manifold $\manifold_G$ can be built from $O_\maxdeg(\pw{G} \cdot n) \leq O_\maxdeg(n^2)$ tetrahedra. Last, as it follows from \cite[Section 4]{bachman2017computing}, the construction can be executed in quadratic time.
\end{proof}

\section{Applications}
\label{sec:appl}

\begin{corollary}
\label{cor:unbounded-pathwidth}
There exist $3$-manifolds $(\manifold_h)_{h\in \mathbb{N}}$ with $\tw{\manifold_h} \leq 2$ and $\pw{\manifold_h} \overset{h\to \infty}{\longrightarrow} \infty$. 
\end{corollary}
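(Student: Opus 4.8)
The plan is to invoke \Cref{thm:construction} on a well-chosen family of graphs. For $h \in \mathbb{N}$, let $G_h$ be the complete binary tree all of whose leaves lie at distance $h$ from the root; then $G_h$ has maximum degree $\maxdeg = 3$, is a tree (so $\tw{G_h} = 1$), and has $\pw{G_h} = \Theta(h)$, a well-known property of complete binary trees. Feeding $G_h$ into \Cref{thm:construction} produces a closed $3$-manifold $\manifold_h := \manifold_{G_h}$ together with a triangulation $\tri_h := \tri_{G_h}$ whose JSJ decomposition $\jsjdecomp$ satisfies $\dual(\jsjdecomp) = G_h$. I will then read off the two required bounds from this construction.

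For the unboundedness of pathwidth, I would use only the right-hand inequality of \myref{eq:pw-inherit}: from $\pw{G_h} \leq 4(3\pw{\manifold_h}+1)$ we get $\pw{\manifold_h} \geq \tfrac{1}{12}\bigl(\pw{G_h}-4\bigr)$, which tends to infinity with $h$ since $\pw{G_h} = \Theta(h)$.

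For the bound $\tw{\manifold_h} \leq 2$, note that \myref{eq:tw-inherit} together with $\tw{G_h}=1$ and $\maxdeg=3$ only bounds $\tw{\manifold_h}$ by some universal constant, not necessarily $2$; so I would bound $\tw{\dual(\tri_h)}$ directly. By the description in the proof of \Cref{claim:lhs-inequalities}, up to multi-arcs and loop arcs the graph $\dual(\tri_h)$ is obtained from $G_h$ by replacing each node $v$ with a copy of $\dual(\tri(d_v))$ ($d_v \leq 3$) and subdividing each arc into the daisy chain of a layered triangulation of the corresponding high-distance torus gluing map. Since $G_h$ is a tree, $\dual(\tri_h)$ is therefore a tree of constant-size gadgets joined by paths. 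I would then verify, from the explicit triangulations $\tri(1),\tri(2),\tri(3)$ in \Cref{app:trg} (and the layered-triangulation form of the gluings), that each $\dual(\tri(k))$, $k \in \{1,2,3\}$, is $K_4$-minor-free and, more precisely, admits a width-$2$ tree decomposition in which the two arcs leaving the gadget at each torus boundary ``port'' share a bag. Stitching these width-$2$ decompositions together along the tree $G_h$ — inserting for each subdivided arc the width-$1$ path decomposition of its daisy chain together with the two relevant port bags — yields a width-$2$ tree decomposition of $\dual(\tri_h)$; the subdivisions do not raise the width by \Cref{lem:subdiv-pw}, and gluing along bounded-size ports in a tree pattern keeps it at $2$. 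Hence $\tw{\manifold_h} \leq \tw{\dual(\tri_h)} \leq 2$, which proves \Cref{cor:unbounded-pathwidth}.

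The pathwidth estimate and the tree-decomposition bookkeeping are routine; the one genuinely delicate step is pinning the treewidth constant at exactly $2$. This is where the crude estimate of \Cref{claim:lhs-inequalities} is too weak and one must inspect the specific building-block triangulations $\tri(1),\tri(2),\tri(3)$ and how the layered daisy chains attach to them (ensuring $K_4$-minor-freeness of the gadgets and that each port is visible from a single bag); I expect this interface analysis to be the main obstacle.
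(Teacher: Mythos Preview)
Your core approach matches the paper's exactly: feed the complete binary trees $T_h$ into \Cref{thm:construction} and read off the two width bounds. The paper's own proof is a single sentence doing precisely this, and for the pathwidth direction your argument via the right-hand side of \myref{eq:pw-inherit} is identical to what the paper (implicitly) does.

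Where you diverge is in trying to justify the sharp constant $\tw{\manifold_h}\leq 2$. You are right that \Cref{thm:construction} alone only gives $\tw{\manifold_h}\leq (\maxdeg/c)\,\tw{T_h}$ for some unspecified universal $c$, i.e.\ merely a uniform constant bound; the paper's proof in fact writes ``bounded-treewidth'' and does not attempt to pin the constant at $2$. So the extra gadget analysis you propose is not something the paper carries out. Unfortunately, your proposed route is likely to fail as stated: from the description in \Cref{app:trg}, $\tri(k)$ has $9k+6$ tetrahedra and, since exactly $2k$ of the $4(9k+6)$ faces are boundary, the dual graph has $17k+12$ arcs. Already for $k=1$ this is $15$ nodes and $29$ arcs, whereas a simple graph of treewidth $\leq 2$ on $n$ nodes has at most $2n-3=27$ edges. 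Within each prism the three tetrahedra form a $C_3$ with no repeated arcs, and two prisms glued along a single annulus are joined by a $2$-matching (distinct endpoints), so parallel arcs are not automatic; one should expect the underlying simple graph of $\dual(\tri(k))$ to contain a $K_4$ minor and hence to have treewidth at least $3$. In short, the ``$K_4$-minor-free'' claim for the building-block duals is the step that is most likely wrong, not just delicate.

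This does not undermine the content of \Cref{cor:unbounded-pathwidth}: what the applications need---and what both your argument and the paper's proof actually deliver---is a family with uniformly bounded treewidth and unbounded pathwidth. If you want the precise constant $2$, you would have to replace the triangulations $\tri(k)$ by different (still small) triangulations of the blocks whose dual graphs are genuinely series--parallel, rather than trying to verify this for the specific $\tri(k)$ of \Cref{app:trg}.
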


\begin{corollary}
\label{cor:unbounded-treewidth}
There exist Haken $3$-manifolds $(\altmanifold_k)_{k\in \mathbb{N}}$ with $\tw{\altmanifold_k} \overset{k\to \infty}{\longrightarrow} \infty$.
\end{corollary}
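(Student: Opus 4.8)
The plan is to derive \Cref{cor:unbounded-treewidth} directly from \Cref{thm:construction} by applying it to a well-chosen infinite family of bounded-degree graphs. First I would recall that the building blocks $\altmanifold(k) = \torus^2_k \times \nsphere{1}$ are Seifert fibered spaces with incompressible torus boundary, so whenever the graph $G$ has at least one arc (hence the JSJ decomposition $\jsjdecomp$ of $\manifold_G$ is nontrivial by \Cref{claim:jsj}), $\manifold_G$ contains an embedded incompressible torus --- namely any JSJ torus --- and is therefore Haken by definition. This takes care of the qualitative ``Haken'' requirement for every $\manifold_G$ arising from a graph with an edge, independently of treewidth considerations.

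Next I would invoke the left-hand inequality \myref{eq:tw-inherit}, which gives $(c/\maxdeg)\tw{\manifold_G} \le \tw{G}$, equivalently $\tw{\manifold_G} \ge (\maxdeg/c)^{-1}\cdot\text{(nothing useful)}$ --- wait, the direction I actually need is the reverse: I want $\tw{\manifold_G}$ \emph{large}, so I should use the right-hand inequality $\tw{G} \le 18(\tw{\manifold_G}+1)$, which rearranges to $\tw{\manifold_G} \ge \tfrac{1}{18}\tw{G} - 1$. Hence it suffices to exhibit an infinite family of graphs $(G_k)_{k\in\mathbb{N}}$ with a uniform degree bound $\maxdeg$ and with $\tw{G_k} \to \infty$. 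The standard choice is the family of $k \times k$ grid graphs, which are $4$-regular (well, maximum degree $4$) and satisfy $\tw{G_k} = k$; alternatively one can take any family of bounded-degree expanders, whose treewidth grows linearly in the number of nodes. Setting $\altmanifold_k := \manifold_{G_k}$ with $G_k$ the $k\times k$ grid, \Cref{thm:construction} produces a triangulation of a closed $3$-manifold $\altmanifold_k$ whose JSJ graph is $G_k$; since $G_k$ has an edge, $\altmanifold_k$ is Haken, and since $\tw{G_k} = k$ we get $\tw{\altmanifold_k} \ge \tfrac{1}{18}(k) - 1 \to \infty$.

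The only genuine subtlety --- and the step I would be most careful about --- is making sure the manifolds produced are really Haken in the intended sense, i.e.\ that they are irreducible (so that the definition of Haken applies) and that the incompressible torus is not boundary-parallel or otherwise trivial. Irreducibility and the incompressibility of the JSJ tori are exactly what \Cref{claim:jsj} asserts (the JSJ decomposition of $\manifold_G$ is genuine with $\dual(\jsjdecomp) = G_k$), so once $G_k$ has at least one arc the corresponding JSJ torus is a properly embedded (here: embedded, as $\altmanifold_k$ is closed) incompressible surface that is not boundary-parallel, witnessing Hakenness. A minor bookkeeping point is that the grid graphs have maximum degree $4$ rather than being regular, but \Cref{thm:construction} only requires a degree bound $\maxdeg$, and the constants $c, c'$ are universal, so this causes no difficulty; one simply fixes $\maxdeg = 4$ throughout the family. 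No hard estimate is needed --- the entire argument is an application of the machinery already assembled, and the ``main obstacle'' is purely the conceptual check that bounded-degree graphs of unbounded treewidth exist and that the resulting manifolds satisfy the definitional hypotheses of ``Haken.''
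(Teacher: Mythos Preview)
Your proposal is correct and follows essentially the same approach as the paper: apply \Cref{thm:construction} to the $k\times k$ grid graphs (maximum degree $4$, treewidth $k$), use the right-hand inequality of \myref{eq:tw-inherit} to force $\tw{\manifold_{G_k}}\to\infty$, and observe that the JSJ tori are two-sided incompressible surfaces in an irreducible closed manifold, hence $\manifold_{G_k}$ is Haken. One minor citation fix: irreducibility of $\manifold_G$ is \Cref{prop:irreducible-MG} rather than \Cref{claim:jsj} (which only identifies the JSJ graph), but your reasoning is otherwise on point.
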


\begin{proof}[Proof of {\Cref{cor:unbounded-pathwidth,cor:unbounded-treewidth}}]
Using Theorem \ref{thm:construction}, the construction of bounded-treewidth (Haken) 3-manifolds with arbitrarily large pathwidth follows by taking $\manifold_h = \manifold_{T_h}$ (with the notation of \Cref{thm:construction}), where $T_h$ is the \emph{complete binary tree of height $h$}. The construction of Haken 3-manifolds of arbitrary large treewidth is deduced by setting $\altmanifold_k = \manifold_{\operatorname{Grid}(k)}$, where $\operatorname{Grid}(k)$ denotes the $k \times k$ grid graph. See \Cref{fig:pw-tw}. Obtained as JSJ decompositions, where the JSJ tori are two-sided, incompressible surfaces, all of these manifolds are Haken.
\end{proof}

\begin{figure}[ht]
\centering
\begin{overpic}[scale=\MyFigScale]{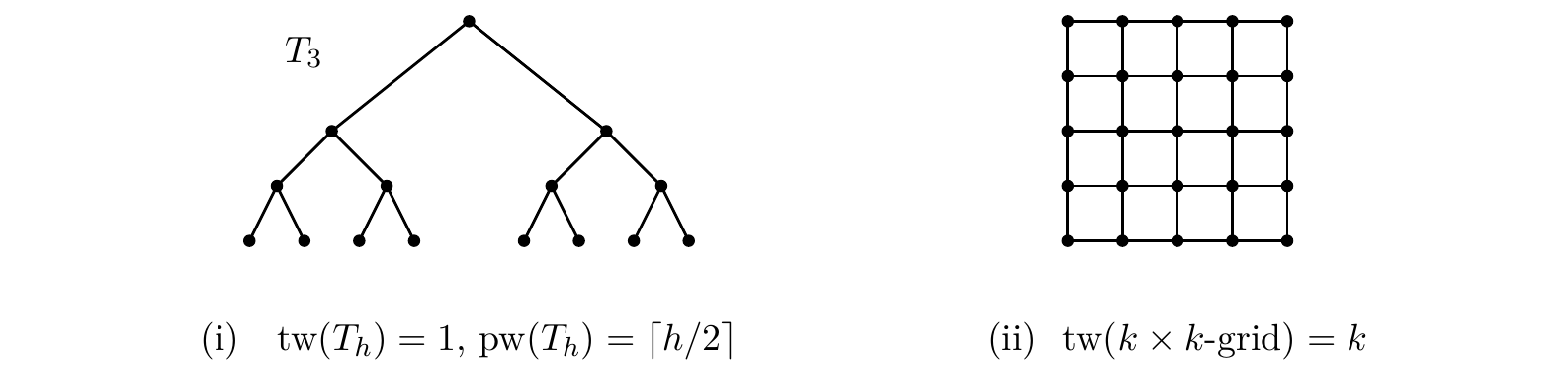}
	\put (56.5,19.25) {\footnotesize{$\operatorname{Grid}(5)$}}
\end{overpic}
\caption{(i) The complete binary tree $T_h$ of height $h$ has pathwidth $\lceil h/2 \rceil$, cf.\ \cite[Theorem 67]{bodlaender1998partial}. (ii) The $k \times k$ grid graph is known to have pathwidth and treewidth both equal to $k$.}
\label{fig:pw-tw}
\end{figure}

\begin{corollary}\label{cor:hardness-treewidth}
	Approximating the treewidth (resp.\ pathwidth) of closed, orientable $3$-manifolds up to a constant factor is at least as hard as giving a constant-factor approximation of the treewidth (resp.\ pathwidth) of bounded-degree graphs. 
\end{corollary}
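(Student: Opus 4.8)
The plan is to give a polynomial-time reduction that converts any constant-factor approximation algorithm for the treewidth (resp.\ pathwidth) of closed, orientable $3$-manifolds into a constant-factor approximation algorithm for the treewidth (resp.\ pathwidth) of graphs of bounded degree. Fix the degree bound $\maxdeg$ and let $G$ be an $n$-node graph with maximum degree at most $\maxdeg$. If $G$ has no arcs we simply output $0$, so assume $\tw{G}\geq 1$ (resp.\ $\pw{G}\geq 1$). First I would invoke \Cref{thm:construction} to build, in $\poly{n}$ time, the triangulation $\tri_G$ of the closed, orientable $3$-manifold $\manifold_G$: by part~\myref{thm:construction-size} it has $O_\maxdeg(\pw{G}\cdot n)\leq O_\maxdeg(n^2)$ tetrahedra (a single triangulation that is admissible for both the treewidth and the pathwidth statement), and by part~\myref{eq:tw-inherit} its width is sandwiched as
\[
(c/\maxdeg)\,\tw{\manifold_G}\ \leq\ \tw{G}\ \leq\ 18\,(\tw{\manifold_G}+1),
\]
with the analogous chain $(c'/\maxdeg)\,\pw{\manifold_G}\leq\pw{G}\leq 4(3\pw{\manifold_G}+1)$ from part~\myref{eq:pw-inherit}, where $c,c'>0$ are universal constants.

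Next I would feed the triangulation $\tri_G$ to the assumed $\alpha$-approximation oracle $\mathcal{A}$ for the treewidth of $3$-manifolds, obtaining a value $a$ with $\tw{\manifold_G}\leq a\leq\alpha\,\tw{\manifold_G}$, and output $b:=18(a+1)$. Unwinding the two inequalities shows that $b$ is a constant-factor approximation of $\tw{G}$. Indeed, since $a\geq\tw{\manifold_G}$, the right sandwich inequality gives $b=18(a+1)\geq 18(\tw{\manifold_G}+1)\geq\tw{G}$, so $b$ is an upper bound; conversely the left inequality gives $\tw{\manifold_G}\leq(\maxdeg/c)\,\tw{G}$, hence $a\leq\alpha(\maxdeg/c)\,\tw{G}$ and, using $\tw{G}\geq 1$, $b=18a+18\leq\big(18\alpha\maxdeg/c+18\big)\,\tw{G}$. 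Thus $b$ is a $\beta$-approximation of $\tw{G}$ with $\beta=18\alpha\maxdeg/c+18$, a constant (it depends on $\maxdeg$, which is fixed in the bounded-degree problem, and on $\alpha$). The pathwidth claim follows verbatim: run the pathwidth oracle on the same $\tri_G$, output $4(3a+1)$, and replace $18(\cdot+1)$ by $4(3\cdot+1)$ and $c$ by $c'$ in the estimate.

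The reduction is clearly polynomial time: by \Cref{thm:construction} the triangulation $\tri_G$ is produced in quadratic time, after which $\mathcal{A}$ is called once and the answer is post-processed by a single arithmetic operation. I do not expect any genuinely hard step here — the entire content of the corollary is delegated to \Cref{thm:construction}, and what remains is bookkeeping: verifying that every multiplicative constant entering $\beta$ (namely $\alpha$, the universal constants $c,c'$, and the fixed degree bound $\maxdeg$) is indeed a constant of the bounded-degree problem, handling the degenerate instances $\tw{G}=0$ / $\pw{G}=0$ separately, and noting that one and the same $\manifold_G$ serves both width parameters because \myref{thm:construction-size} already accounts for the larger (pathwidth-sized) construction.
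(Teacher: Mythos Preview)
Your proposal is correct and follows essentially the same approach as the paper: invoke \Cref{thm:construction} to pass in polynomial time from a bounded-degree graph $G$ to a $3$-manifold $\manifold_G$ whose width is within a constant factor of that of $G$, then use the assumed oracle on $\manifold_G$. The paper's proof is a two-line version of exactly this; your additional bookkeeping (tracking the constant $\beta$, handling the degenerate case $\tw{G}=0$) is sound but not strictly necessary for the level of detail the paper aims for.
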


\begin{proof}
The argument for treewidth and pathwidth is the same. Given a graph $G$ with maximum vertex degree $\maxdeg$, we use the polynomial-time procedure from \Cref{thm:construction} to build a $3$-manifold $\manifold$ with $\tw{\manifold}$ within a constant factor of $\tw{G}$. An oracle for a constant-factor approximation of $\tw{\manifold}$ hence gives us a constant-factor approximation of $\tw{G}$ as well.
\end{proof}

\begin{remark}
Computing a constant-factor approximation of treewidth (resp.\ pathwidth) for arbitrary graphs is known to be conditionally {\bf NP}-hard under the Small Set Expansion Hypothesis \cite{raghavendra2010graph,wu2014inapproximability, yamazaki2018inapproximability}. For proving \Cref{cor:hardness-treewidth}, however, we rely on the assumption that the graph has bounded degree. Thus the conditional hardness of approximating the treewidth (resp.\ pathwidth) of a $3$-manifold does not directly follow.
Establishing such a hardness result would add to the growing, but still relatively short list of algorithmic problems in low-dimensional topology that are known to be (conditionally) hard, cf.\ \cite{agol2006computational, bachman2017computing, mesmay2021unbearable, koenig2021nphard, lackenby2017conditionally}.
\end{remark}

\section{Discussion and Open Problems}
\label{sec:discussion}

We have demonstrated that $3$-manifolds with JSJ decompositions with dual graphs of large treewidth (resp.\ pathwidth) and ``sufficiently complicated'' gluing maps cannot admit triangulations of low treewidth (resp.\ pathwidth). This provides a technique to construct a wealth of families of $3$-manifolds with unbounded tree- or pathwidth that hopefully will prove to be useful for future research in the field.

One obvious limitation of our construction is a seemingly heavy restriction on the JSJ gluing maps in order to deduce a connection between the treewidth of a $3$-manifold and that of the dual graph of its JSJ decomposition. Hence, a natural question to ask is, how much this restriction on gluing maps may be relaxed while still allowing meaningful structural results about treewidth. In particular, we have the following question.

\begin{question}
  Given a $3$-manifold $\manifold$ with JSJ decomposition $\jsjdecomp$ and no restrictions on its JSJ gluings. Is there a lower bound for $\tw{\manifold}$ in terms of $\tw{\dual(\jsjdecomp)}$?  
\end{question}

Note that the assumption that we are considering the JSJ decomposition of $\manifold$ is necessary: Consider a graph $G=(V,E)$ and a collection of $3$-manifolds $\{ \manifold_v\}_{v \in V}$, where $\manifold_v$ has $\deg (v)$ torus boundary components. Assume that we glue the manifolds $\manifold_v$ along the arcs of $G$ to obtain a closed $3$-manifold $\manifold$. Without restrictions on how these pieces are glued together, this cannot result in a lower bound $\tw{\manifold}$ in terms of $\tw{G}$: we can construct Seifert fibered spaces $\manifold$ in this way, even if $G=(V,E)$ is the complete graph with $|V|$ arbitrarily large. At the same time, Seifert fibered spaces have constant treewidth, see \cite{huszar2019manifold}.

\begin{question}
What is the complexity of computing the treewidth of a $3$-manifold? 
\end{question}

We believe that this should be at least as hard as computing the treewidth of a graph. 

\bibliography{references}

\appendix

\section{Selected Results from 3-Manifold Topology}
\label{app}

In this appendix we collect important facts about the topology of 3-manifolds we rely on. We do not aim to reproduce their complete proofs; indeed, those are often quite technical or include lengthy case analyses. However, we do aim to give precise statements and references.

\subsection{Weak reductions}
\label{app:weak}

\begin{example}\label{ex:weak-reduction} The \emph{$3$-torus} $\torus^3 = \nsphere{1}\times\nsphere{1}\times\nsphere{1}$ is a closed 3-manifold built by identifying opposite faces of a solid cube $K$ (\Cref{fig:3-torus}). It is well-known that $\torus^3$ has Heegaard genus $\heeg{\torus^3}=3$. A genus-three Heegaard splitting $\heegaard=(\hbody_1,\hbody_2,\surface)$ of $\torus^3$ can be obtained as follows: Upon the face-identifications, the 1-skeleton of $K$ becomes a bouquet \raisebox{-1.5pt}{\includegraphics[scale=.75]{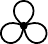}} of three circles. We let $\hbody_1 = \overline{N}_{\torus^3}(\raisebox{-1.5pt}{\includegraphics[scale=.75]{bouquet}})$ be a closed regular neighborhood of \raisebox{-1.5pt}{\includegraphics[scale=.75]{bouquet}} in $\torus^3$, $\hbody_2$ be the closure of $\torus^3\setminus\hbody_1$, and $\surface = \hbody_1 \cap \hbody_2$ be the resulting splitting surface (\Cref{fig:3-torus-splitting}).

\begin{figure}[ht]
\centering
     \begin{subfigure}[b]{0.3\textwidth}
         \centering
         \includegraphics[width=.85\textwidth]{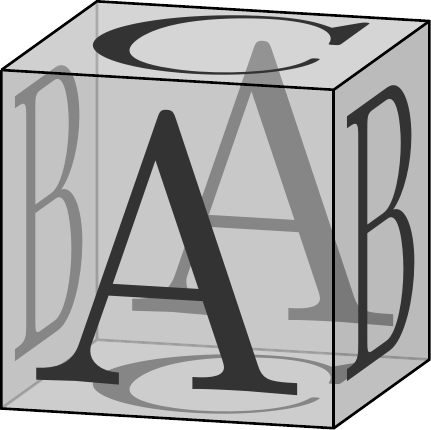}
         \caption{The 3-torus $\torus^3=\nsphere{1}\times\nsphere{1}\times\nsphere{1}$ can be obtained by identifying opposite faces of a solid cube.}
         \label{fig:3-torus}
     \end{subfigure}
     \hfill
     \begin{subfigure}[b]{0.3\textwidth}
         \centering
         \includegraphics[width=.85\textwidth]{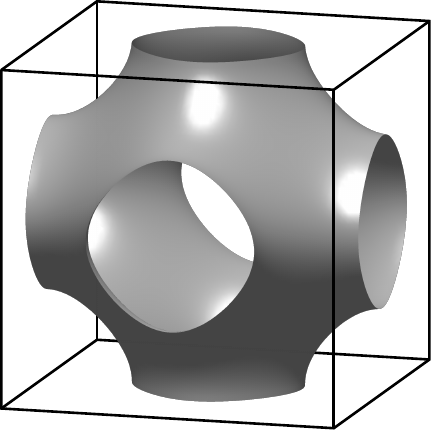}
         \caption{A surface $\surface \subset \torus^3$ defining a Heegaard splitting $(\hbody_1,\hbody_2,\surface)$ of $\torus^3$ of minimum genus three.}
         \label{fig:3-torus-splitting}
     \end{subfigure}
     \hfill
     \begin{subfigure}[b]{0.3\textwidth}
         \centering
         \begin{overpic}[width=.85\textwidth]{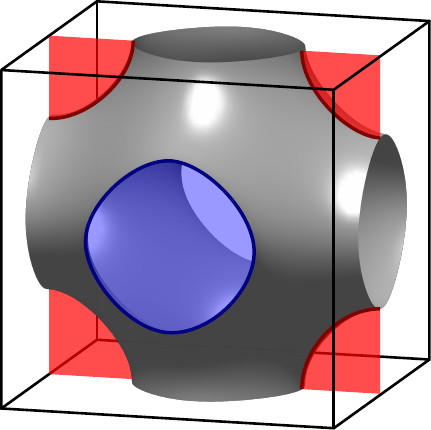}
         		\put (13,17) {\color[RGB]{0,0,0}$D_1$}
		\put (35,40) {\color[RGB]{0,0,0}$D_2$}
	\end{overpic}
         \caption{Two essential disks $D_i \subset \hbody_i$ $(i=1,2)$ witnessing the weak reducibility of $(\hbody_1,\hbody_2,\surface)$.}
         \label{fig:3-torus-weak-reduction}
     \end{subfigure}
        \caption{A weakly reducible minimum-genus Heegaard splitting of the 3-torus $\torus^3=\mathbb{S}^1\times\mathbb{S}^1\times\mathbb{S}^1$.}
        \label{fig:3-torus-example}
\end{figure}

Note that $D_1$ and $D_2$ in \Cref{fig:3-torus-weak-reduction} are essential disks in the handlebodies $\hbody_1$ and $\hbody_2$, respectively, and $\partial D_1 \cap \partial D_2 = \emptyset$, hence the splitting $\heegaard=(\hbody_1,\hbody_2,\surface)$ is weakly reducible. Now, a weak reduction at $\surface$ may be carried out as presented on \Cref{fig:schemata-weak-reduction}. We refer to \cite[p.\ 108--111]{scharlemann2016lecture} for details (cf.\ \cite[p.\ 61--62]{scharlemann2005lecture}).

\begin{figure}[!ht]
\centering
     \begin{subfigure}[b]{0.3\textwidth}
         \centering
         \begin{overpic}[scale=1.05]{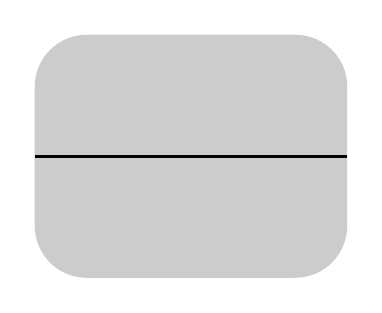}
		\put (1,39) {\small{$\surface$}}
		\put (46,23) {\small{$\hbody_2$}}
		\put (46,54) {\small{$\hbody_1$}}
		\put (102.5,54) {\LARGE{$\leadsto$}}
	\end{overpic}
         \label{fig:heegaard-splitting-schematic}
     \end{subfigure}
     \hfill
     \begin{subfigure}[b]{0.3\textwidth}
         \centering
         \begin{overpic}[scale=1.05]{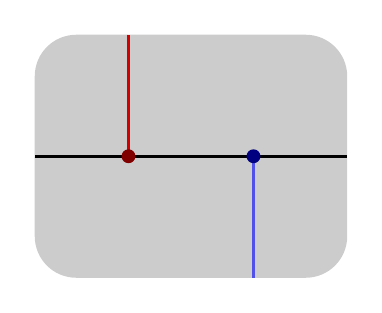}
		\put (1,39) {\small{$\surface$}}
		\put (62,0) {\small{$D_2$}}
		\put (60,46) {\small{$\partial D_2$}}
		\put (29.5,76.5) {\small{$D_1$}}
		\put (27.5,30.5) {\small{$\partial D_1$}}
		\put (102.5,54) {\LARGE{$\leadsto$}}
	\end{overpic}
         \label{fig:weak-reducing-disks}
     \end{subfigure}
     \hfill
     \begin{subfigure}[b]{0.3\textwidth}
         \centering
         \begin{overpic}[scale=1.05]{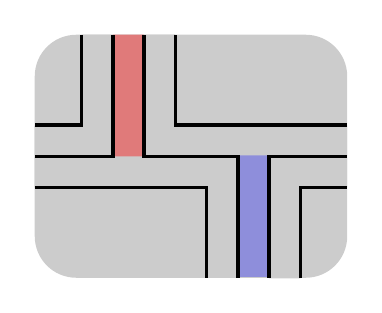}
	\end{overpic}
         \label{fig:weak-reduction-schematic}
     \end{subfigure}

	\vspace{-3pt}

     \begin{subfigure}[b]{1\textwidth}
		\begin{overpic}[scale=1.05]{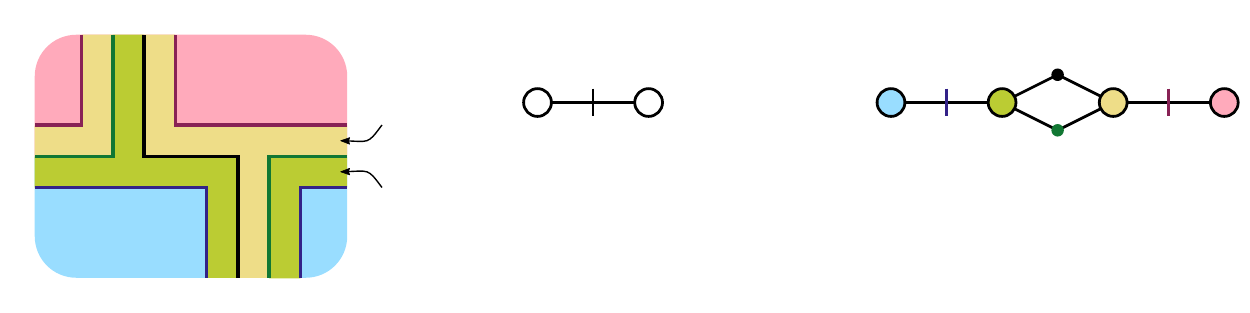}
			\put (46.75,19) {\small{$\surface$}}
			\put (42,12) {\small{$\hbody_2$}}
			\put (51,12) {\small{$\hbody_1$}}
			\put (74.75,19) {\small{$\surface_2$}}
			\put (70,12) {\small{$\compbody^{(2)}_1$}}
			\put (79,12) {\small{$\compbody^{(2)}_2$}}
			\put (92.5,19) {\small{$\surface_1$}}
			\put (87.75,12) {\small{$\compbody^{(1)}_1$}}
			\put (96.75,12) {\small{$\compbody^{(1)}_2$}}
			\put (37.5,4) {\small{$\width(\heegaard) = (g(\surface)) = (3)$}}
			\put (68.5,4) {\small{$\width(\heegaard') = (g(\surface_1), g(\surface_2)) = (2,2)$}}
			\put (60,16) {\Large{$\leadsto$}}
			\put (8.5,5.5) {\small{$\compbody^{(2)}_1$}}
			\put (19.25,18) {\small{$\compbody^{(1)}_2$}}
			\put (0,9) {\small{$\surface_2$}}
			\put (0,14.5) {\small{$\surface_1$}}
			\put (30.25,8) {\small{$\compbody^{(2)}_2$}}
			\put (30.25,15.5) {\small{$\compbody^{(1)}_1$}}
		\end{overpic}
		\label{fig:heegaard-splitting-schematic-colors}
	\end{subfigure}

	\vspace{-6pt}

	\caption{Schematic ``movie'' of a weak reduction of the genus-three Heegaard splitting of $\torus^3$.}
	\label{fig:schemata-weak-reduction}
\end{figure}

\end{example}

\subsection{Seifert fibered spaces and JSJ decompositions}
\label{app:jsj}

A key assertion about the manifold $\manifold_G$ constructed in \Cref{sec:const} is \Cref{claim:jsj}, according to which the dual graph $\dual(\jsjdecomp)$ of the JSJ decomposition $\jsjdecomp$ of $\manifold_G$ satisfies $\dual(\jsjdecomp) = G$. This can be shown through the same argument that is outlined by Lackenby in \cite[p.\ 591]{lackenby2017conditionally}. However, while we use the same building blocks as Lackenby (namely the manifolds $\altmanifold(k)$), we glue them together via different (\emph{high-distance}) maps. Therefore, some explanations are in order. To this end, closely following \cite{hatcher2007notes}, we recall some basic definitions and important results.

\subparagraph*{Seifert fiberings} 
A {\em model Seifert fibering} of the solid torus $\nsphere{1} \times D$ is a decomposition of $\nsphere{1} \times D$ into disjoint circles, called {\em fibers}, obtained as follows. Consider the solid cylinder $[0,1] \times D$ and identify its boundary disks $\{0\}\times D$ and $\{1\}\times D$ through a rotation by $2\pi p/q$, where $p$ and $q$ are relatively prime integers with $q > 0$. After this identification the segment $[0,1] \times \{0\}$ becomes a fiber $\nsphere{1}\times\{0\}$, whereas every other fiber consists of $q$ segments of the form $[0,1]\times\{x\}$. Next, a \emph{Seifert fibering} of  $3$-manifold $\manifold$ is a decomposition of $\manifold$ into disjoint circles (also called \emph{fibers}), such that each fiber has a neighborhood homeomorphic to that of a fiber in some model Seifert fibering of $\nsphere{1} \times D$ via a fiber-preserving homeomorphism. A $3$-manifold that admits a Seifert fibering is called a \emph{Seifert fibered space}. The following classical result says that most Seifert fibered spaces admit a unique Seifert fibering.

\begin{theorem}[\cite{orlik1967topologie,waldhausen1967graph}, adapted from {\cite[Theorem 2.3]{hatcher2007notes}}]
\label{thm:seifert-uniqueness}
Every orientable Seifert fibered space admits a unique Seifert fibering up to fiber-preserving homeomorphism, except for:
\begin{itemize}
	\item $\nsphere{1}\times D$, the solid torus (various model fiberings; see above),
	\item $\nsphere{1} \simtimes \nsphere{1} \simtimes I$, the orientable $\nsphere{1}$-bundle over the M\"obius band (two fiberings),
	\item Seifert fibered spaces over $\mathbb{R}P^2$ with one exceptional fiber (two fiberings),
	\item lens spaces $L(p,q)$ including $\nsphere{3}$ and $\nsphere{1} \times \nsphere{2}$ (various fiberings), and
	\item $\nsphere{1} \simtimes \nsphere{1} \simtimes \nsphere{1}$, the orientable $\nsphere{1}$-bundle over the Klein bottle (two fiberings).
\end{itemize}
\end{theorem}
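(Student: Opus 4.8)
This is the classical uniqueness theorem of Waldhausen~\cite{waldhausen1967graph} and Orlik~\cite{orlik1967topologie} (see~\cite[Theorem~2.3]{hatcher2007notes} for the exact formulation we use), so the plan is to recall the structure of the standard argument rather than to invent a new one. The guiding principle is that, outside the listed exceptions, a \emph{regular fiber} of a Seifert fibering of $M$ is canonically attached to $M$: it is determined first as a conjugacy class in $\pi_1(M)$, then, as an unoriented curve, up to isotopy in $M$, and this in turn pins down the whole fibering up to fiber-preserving homeomorphism.

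First I would dispose of the degenerate pieces directly. If $M$ is reducible it is $\nsphere{1}\times\nsphere{2}$ or $\mathbb{R}P^3\#\mathbb{R}P^3$, both checked by hand and accounted for among the exceptions; if $\pi_1(M)$ is finite then $M$ is a spherical space form ($\nsphere{3}$, a lens space, a prism manifold, or another small Seifert fibered space over $\nsphere{2}$ or $\mathbb{R}P^2$ with few exceptional fibers), and for each such $M$ the set of Seifert fiberings is finite and can be enumerated. Likewise the solid torus $\nsphere{1}\times\twodisk$, the orientable $\nsphere{1}$-bundles over the M\"obius band and over the Klein bottle, and the Seifert fibered spaces over $\mathbb{R}P^2$ with one exceptional fiber are examined individually. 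So I may assume $M$ is irreducible with infinite fundamental group; then $M$ is aspherical, and the base $2$-orbifold $B$ of any of its Seifert fiberings is either a surface with boundary or a closed orbifold with $\chi^{\mathrm{orb}}(B)\le 0$ that is not one of the small orbifolds just handled.

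The heart of the argument is algebraic. A Seifert fibering of $M$ yields a short exact sequence
\[ 1 \longrightarrow K \longrightarrow \pi_1(M) \longrightarrow \pi_1^{\mathrm{orb}}(B) \longrightarrow 1, \]
with $K\cong\mathbb{Z}$ generated by a regular fiber $h$; the subgroup $K$ is normal, and it is central precisely when $B$ is orientable. Under the standing hypotheses $\pi_1^{\mathrm{orb}}(B)$ is infinite with trivial center, which forces $K$ to be intrinsic to the group $\pi_1(M)$: it is the set of elements with only finitely many conjugates (equivalently, the center, in the orientable-base case), i.e.\ the unique maximal normal infinite cyclic subgroup. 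Hence the conjugacy class of $h^{\pm1}$ depends only on $M$. I would then upgrade this to a topological statement: a regular fiber is an incompressible, non-boundary-parallel curve, so for Haken $M$ Waldhausen's rigidity recovers its isotopy class from its conjugacy class; the vertical incompressible tori of two fiberings (preimages of essential curves or arcs in the respective bases) can then be isotoped to coincide, and cutting $M$ along a maximal such family reduces both fiberings to fiberings of solid tori and $\torus^2\times I$ pieces with matching boundary slopes, where uniqueness is immediate. The non-Haken Seifert fibered spaces with infinite $\pi_1$ (the small ones over $\nsphere{2}$ with three exceptional fibers) are handled separately via their model geometry, which is one of $\widetilde{\mathrm{SL}_2}$, Nil geometry, $\mathbb{E}^3$, or $\mathbb{H}^2\times\mathbb{R}$ and carries a unique Seifert structure.

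The main obstacle --- and essentially the only substantive part --- is the bookkeeping of the exceptional cases: one must verify that the ``fiber $=$ center'' mechanism fails for exactly the families appearing in the statement, and exhibit the genuinely inequivalent fiberings in each. For $\nsphere{1}\times\twodisk$ every $(p,q)$-torus curve is a regular fiber of some fibering; for $\nsphere{1}\times\nsphere{2}$ and for lens spaces the core curves of the two handlebodies of a genus-one Heegaard splitting give inequivalent vertical fibers; for the orientable $\nsphere{1}$-bundles over the M\"obius band and the Klein bottle, and for the $\mathbb{R}P^2$-with-one-exceptional-fiber case, the ambiguity arises because $\pi_1^{\mathrm{orb}}(B)$ is infinite cyclic, infinite dihedral, or has a nontrivial center, so the normal $\mathbb{Z}$ is no longer unique. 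Checking that these are all the exceptions, and that the list matches exactly the orbifolds excluded in the algebraic step, is where the care goes; everything else is the clean centrality principle together with Haken rigidity.
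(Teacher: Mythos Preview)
The paper does not prove this theorem at all: it is quoted in the appendix as a classical result of Waldhausen and Orlik, with a pointer to Hatcher's notes for the formulation used, and the surrounding text explicitly says ``We do not aim to reproduce their complete proofs; indeed, those are often quite technical or include lengthy case analyses.'' So there is no argument in the paper to compare your proposal against.

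That said, your sketch is a faithful outline of the standard proof as presented, e.g., in Hatcher's notes: identify the regular fiber via the unique maximal normal cyclic subgroup of $\pi_1(M)$ (the ``fiber $=$ center'' mechanism when the base is orientable), upgrade this algebraic uniqueness to an isotopy statement, and then do the bookkeeping for the small cases where the orbifold group has nontrivial center or is finite. One minor quibble: $\mathbb{R}P^3\#\mathbb{R}P^3$ is non-orientable only in the sense of its base orbifold, not as a $3$-manifold, so it is not excluded by the hypothesis of the theorem; in fact it is the Seifert space over $\mathbb{R}P^2$ with no exceptional fibers and has a unique fibering, so it should be absorbed into the generic case rather than the exceptional list. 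Otherwise the structure you describe is the standard one and would be acceptable as a proof summary---just be aware that for the purposes of this paper no proof is expected, only the citation.
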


\begin{corollary}
\label{cor:seifert-uniqueness-blocks}
The manifolds $\altmanifold(k)$ defined in \Cref{sec:const} admit unique Seifert fiberings.
\end{corollary}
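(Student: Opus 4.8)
The plan is to recognize $\altmanifold(k)=\torus^2_k\times\nsphere{1}$ as an orientable Seifert fibered space and then to rule out, one by one, that it is any of the exceptional manifolds in \Cref{thm:seifert-uniqueness}. First I would note that $\altmanifold(k)$ carries the \emph{product Seifert fibering}, whose fibers are the circles $\{x\}\times\nsphere{1}$ for $x\in\torus^2_k$; since $\torus^2_k$ is an honest surface with (nonempty) boundary, this fibering has no exceptional fibers and base surface $\torus^2_k$. As $\torus^2_k$ and $\nsphere{1}$ are orientable, so is $\altmanifold(k)$, hence \Cref{thm:seifert-uniqueness} applies and it suffices to check that $\altmanifold(k)$ does not appear on its list of exceptions.

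Two elementary invariants suffice for this. First, $\partial\altmanifold(k)=(\partial\torus^2_k)\times\nsphere{1}$ is a disjoint union of $k\ge 1$ tori, so $\altmanifold(k)$ has nonempty boundary; this alone excludes every \emph{closed} exception, namely the lens spaces (including $\nsphere{3}$ and $\nsphere{1}\times\nsphere{2}$), the Seifert fibered spaces over $\mathbb{R}P^2$ with one exceptional fiber, and the orientable $\nsphere{1}$-bundle over the Klein bottle. Second, $\torus^2_k$ is homotopy equivalent to a wedge of $k+1$ circles, so by the K\"unneth formula $H_1(\altmanifold(k);\mathbb{Z})\cong\mathbb{Z}^{k+2}$, free abelian of rank $k+2\ge 3$. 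The solid torus has $H_1\cong\mathbb{Z}$, and the orientable $\nsphere{1}$-bundle over the M\"obius band, $\nsphere{1}\simtimes\nsphere{1}\simtimes I$, has fundamental group the Klein bottle group and hence $H_1\cong\mathbb{Z}\oplus\mathbb{Z}/2$; neither is free abelian of rank $\ge 3$. Thus $\altmanifold(k)$ is not on the exception list, and \Cref{thm:seifert-uniqueness} gives uniqueness of its Seifert fibering up to fiber-preserving homeomorphism.

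I do not anticipate a genuine obstacle here: the proof is a routine case-check against \Cref{thm:seifert-uniqueness}, and for the intended application it is only this uniqueness (which pins down the fiber slope on each boundary torus of a block) that will matter. The one place that warrants a moment's care is the exception $\nsphere{1}\simtimes\nsphere{1}\simtimes I$, which shares with $\altmanifold(1)$ a single torus boundary component, so that the boundary count does not by itself dispose of it; the first-homology computation handles this case — and, uniformly, all the others.
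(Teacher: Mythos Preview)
Your argument is correct and follows the same overall strategy as the paper: invoke \Cref{thm:seifert-uniqueness} and verify that $\altmanifold(k)$ is not one of the listed exceptions. The only difference is in the invariant used to rule out the exceptions. The paper dispatches all cases at once by observing that $\altmanifold(k)$ fibers over a genus-one surface (namely $\torus^2_k$), whereas every exceptional manifold in the list fibers only over surfaces of genus zero (disk, sphere, M\"obius band, $\mathbb{R}P^2$, Klein bottle). Your route via boundary count plus $H_1$ is slightly longer but equally valid, and has the minor advantage of making the case $\nsphere{1}\simtimes\nsphere{1}\simtimes I$ explicit rather than relying on knowledge of its alternate fibering.
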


\begin{proof}
The statement follows by observing that $\altmanifold(k)$ is a Seifert fibered space over the torus, and so it does not belong to any of the exceptional manifolds mentioned in \Cref{thm:seifert-uniqueness}.
\end{proof}

\begin{proposition}[folklore, see {\cite[Proposition 1.6]{hatcher2007notes}}]
\label{prop:irreducible-cover}
If an irreducible $3$-manifold $\widetilde{\manifold}$ is a covering space of the interior of another $3$-manifold $\manifold$, then $\manifold$ is irreducible as well.
\end{proposition}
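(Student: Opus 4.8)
The plan is to reduce to the interior of $\manifold$ and then run the standard \emph{lift, bound, descend} strategy. Since irreducibility concerns only embedded $2$-spheres, and every embedded $2$-sphere in $\manifold$ can be pushed off $\partial\manifold$ using a collar, it suffices to prove that every embedded $2$-sphere in $M := \interior{\manifold}$ bounds a $3$-ball in $M$. So fix such a sphere $S \subset M$ and let $p\colon\widetilde{\manifold}\to M$ be the given covering map (we may assume $\manifold$, hence $M$, and after passing to a component also $\widetilde{\manifold}$, is connected). The preimage $p^{-1}(S)$ is nonempty and, being a covering space of $S \cong \nsphere{2}$, it is a disjoint union of $2$-spheres, each mapped homeomorphically onto $S$ because $\nsphere{2}$ is simply connected.

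Pick any component $\widetilde{S}_0$ of $p^{-1}(S)$. Since $\widetilde{\manifold}$ is irreducible, $\widetilde{S}_0$ bounds a $3$-ball $\widetilde{B}_0 \subset \widetilde{\manifold}$. The intersection $p^{-1}(S)\cap\widetilde{B}_0$ is a compact surface whose components are $\widetilde{S}_0 = \partial\widetilde{B}_0$ together with finitely many $2$-spheres contained in $\interior{\widetilde{B}_0}$; if there are any of the latter, an innermost one $\widetilde{S}$ bounds (by Alexander's theorem inside $\widetilde{B}_0 \cong \nball{3}$) a $3$-ball $\widetilde{B}\subseteq\interior{\widetilde{B}_0}$ with $\interior{\widetilde{B}}\cap p^{-1}(S)=\emptyset$. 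Replacing $(\widetilde{S}_0,\widetilde{B}_0)$ by $(\widetilde{S},\widetilde{B})$, we may therefore assume that $p$ maps $\interior{\widetilde{B}}$ into $M\setminus S$ and restricts to a homeomorphism $\widetilde{S}\to S$.

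Now cut $M$ along the (two-sided) sphere $S$ to get a $3$-manifold $M_S$ with quotient map $q\colon M_S\to M$, a homeomorphism over $M\setminus S$; its boundary $\partial M_S$ consists of one copy of $S$ if $S$ is non-separating and of two copies if it is separating. Since $\interior{\widetilde{B}}$ is simply connected, and since near $\partial\widetilde{B}$ the local homeomorphism $p$ carries a collar of $\widetilde{S}$ homeomorphically onto a one-sided collar of $S$ in $M$, the map $p|_{\widetilde{B}}$ factors through $q$: there is a local homeomorphism $\bar r\colon\widetilde{B}\to M_S$ with $q\circ\bar r = p|_{\widetilde{B}}$ that restricts to a homeomorphism of $\widetilde{S}$ onto one boundary copy $S_+\subseteq\partial M_S$ and sends $\interior{\widetilde{B}}$ into $\interior{M_S}$. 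As $\widetilde{B}$ is compact, $\bar r$ is proper; an open--closed argument then shows that $\bar r$ surjects onto the component $N$ of $M_S$ containing $S_+$, and that $S_+$ is the only boundary component of $N$ (in particular $S$ must be separating). A proper surjective local homeomorphism between connected manifolds is a finite covering map, and since $\bar r|_{\widetilde{S}}\colon\widetilde{S}\to S_+$ is a homeomorphism it has a single sheet; hence $\bar r$ is a homeomorphism. Therefore $p|_{\widetilde{B}}=q\circ\bar r$ is injective, so $p(\widetilde{B})$ is a $3$-ball in $M$ with boundary $p(\widetilde{S})=S$, as required.

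The main obstacle is the final step: showing that the covering projection $p$ is injective on the ball $\widetilde{B}$, since a priori $p$ could wrap $\widetilde{B}$ onto itself or onto a complicated subset of $M$. The device that resolves it is the innermost-sphere reduction --- which places $\widetilde{B}$ inside $M$ cut along $S$, where the boundary behaviour is controlled --- combined with the fact that a proper local homeomorphism onto a connected manifold is a covering map, whose number of sheets is pinned to $1$ by the behaviour on $\partial\widetilde{B}=\widetilde{S}$. Everything else (the reduction to the interior, the lift along $p$, and the appeal to irreducibility of $\widetilde{\manifold}$) is routine.
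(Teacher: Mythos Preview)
The paper does not actually prove this proposition; it is stated as folklore with a pointer to Hatcher's notes and then used as a black box. Your argument is correct and is essentially the standard ``lift, bound, descend'' proof one finds in Hatcher: lift the sphere $S$ to the cover, use irreducibility upstairs to bound a lift by a ball, pass to an innermost lift so that the interior of the ball misses $p^{-1}(S)$, and then show that the covering projection restricts to a homeomorphism on that ball. Your handling of the last step---cutting $M$ along $S$, factoring $p|_{\widetilde{B}}$ through $M_S$, and recognizing the resulting map as a proper local homeomorphism of degree one on the boundary, hence a global homeomorphism---is a bit more explicit than most textbook presentations (which often just say ``by simple connectivity of the ball, $p|_{\widetilde{B}}$ is a homeomorphism onto its image''), but the underlying idea is the same.
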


\begin{claim}[folklore, see {\cite[p.\ 11]{hatcher2007notes}}]
\label{claim:surface-cover}
For any compact, connected and orientable surface $F$ (possibly with boundary) other than $\nsphere{2}$, the interior of $F \times \nsphere{1}$ has $\mathbb{R}^3$ as its universal cover.\footnote{The claim holds for non-orientable surfaces as well with the exception of $\mathbb{R}P^2$.}
\end{claim}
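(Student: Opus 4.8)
The plan is to reduce the claim to two standard facts: that the universal cover of a product of spaces is the product of their universal covers, and that a simply connected surface without boundary is homeomorphic either to $\nsphere{2}$ or to $\R^2$ (classification of surfaces; equivalently, the uniformization theorem).

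First I would observe that, since $\nsphere{1}$ has no boundary, $\partial(F \times \nsphere{1}) = (\partial F) \times \nsphere{1}$, so $\interior{F \times \nsphere{1}} = \interior{F} \times \nsphere{1}$, where $\interior{F}$ is a connected, orientable surface without boundary. Writing $p\colon \widetilde{\interior{F}} \to \interior{F}$ for the universal cover of $\interior{F}$ and $\exp\colon\R\to\nsphere{1}$ for the exponential covering, the product map $p \times \exp$ is a covering map onto $\interior{F} \times \nsphere{1}$ whose total space $\widetilde{\interior{F}} \times \R$ is simply connected; hence it \emph{is} the universal cover of $\interior{F \times \nsphere{1}}$, and it remains only to show $\widetilde{\interior{F}} \cong \R^2$.

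By the classification of surfaces, $\widetilde{\interior{F}}$ is homeomorphic to $\nsphere{2}$ or to $\R^2$, so I only need to exclude $\nsphere{2}$. If $\widetilde{\interior{F}} \cong \nsphere{2}$, then $\interior{F}$ is covered by the compact space $\nsphere{2}$ and is therefore compact itself, so $F$ is closed and $F = \interior{F}$ is a closed surface covered by $\nsphere{2}$; the only such surfaces are $\nsphere{2}$ and $\R P^2$, the former excluded by hypothesis and the latter by orientability — a contradiction. Hence $\widetilde{\interior{F}} \cong \R^2$, and the universal cover of $\interior{F \times \nsphere{1}}$ is $\R^2 \times \R \cong \R^3$. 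I do not anticipate a genuine obstacle here; the only mild points requiring care are the boundary bookkeeping in the first step and the correct invocation of the surface classification theorem (for the non-orientable analogue noted in the footnote, the same argument works once one additionally assumes $F \not\cong \R P^2$, which is exactly why that case is the lone exception).
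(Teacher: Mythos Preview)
Your proof is correct and follows essentially the same approach as the paper: reduce to showing that the universal cover of $\interior{F}$ is $\R^2$, and then take the product with the universal cover $\R$ of $\nsphere{1}$. The only difference is organizational---the paper handles closed surfaces via explicit Euclidean/hyperbolic tilings and invokes uniformization only in the bounded case, whereas you apply the $\nsphere{2}$-versus-$\R^2$ dichotomy for simply connected surfaces uniformly and then exclude $\nsphere{2}$ by a compactness argument.
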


\begin{claimproof}
If $F \ncong \nsphere{2}$, it is well-known that the universal cover of $F$ is $\mathbb{R}^2$. Indeed, for $F \cong \torus^2$ a covering map $\mathbb{R}^2 \rightarrow F$ can be defined through the regular tiling of the Euclidean plane via squares, while for $F \cong \altsurface_g$ with genus $g > 1$ such a map can be obtained through the regular tiling of the hyperbolic plane via regular $4g$-gons, with $4g$ such polygons meeting at each vertex \cite{coxeter1956regular}.
If $F$ has non-empty boundary, then by the theory of covering spaces \cite[Theorem 10.19]{armstrong1983basic} the interior of $F$ has some universal cover $\mathscr{X}$, and by the \emph{uniformization theorem} \cite[Theorem XII.0.1]{saint-gervais2016uniformisation}, this space $\mathscr{X}$ -- being a non-compact, open, simply-connected surface -- must be homeomorphic to $\mathbb{R}^2$.
As the universal cover of $\nsphere{1}$ is $\mathbb{R}$, it follows that the universal cover of $F \times \nsphere{1}$ is $\mathbb{R}^3$.
\end{claimproof}

\begin{claim}[folklore, see {\cite[p.\ 14]{hatcher2007notes}}]
\label{claim:compressible-torus}
A $2$-sided torus $T \subset \manifold$ in an irreducible 3-manifold $\manifold$ is compressible if and only if $T$ either bounds a solid torus $\nsphere{1} \times D \subset \manifold$ or lies in a ball in $\manifold$.
\end{claim}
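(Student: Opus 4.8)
The plan is to prove both implications, the forward one via irreducibility and the reverse one via the Loop Theorem.

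For the ``if'' direction I would argue as follows. If $T$ bounds a solid torus $V\cong\nsphere{1}\times D$, then a meridian disk $\{\ast\}\times D$ is properly embedded in $V$, has interior disjoint from $T=\partial V$, and has boundary a meridian, which is non-separating on the torus $T$ and hence bounds no disk there; so it is a compressing disk. If instead $T$ lies in a ball $B\subset\manifold$, then, being two-sided and closed in the simply connected $B$, the torus $T$ separates $B$ into closed pieces $X$ and $Y$. Were $\pi_1(T)\to\pi_1(X)$ and $\pi_1(T)\to\pi_1(Y)$ both injective, van Kampen would give $1=\pi_1(B)=\pi_1(X)\ast_{\pi_1(T)}\pi_1(Y)$, which is impossible since the right-hand side contains $\pi_1(T)\cong\mathbb{Z}^2$. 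So the inclusion of $T$ into, say, $X$ is not $\pi_1$-injective, and the Loop Theorem yields an embedded disk $D\subset X\subset\manifold$ with $\partial D\subset T$ essential on $T$ --- a compressing disk. (Equivalently, one may invoke Alexander's theorem that every torus in $\nsphere 3$ is compressible.)

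For the ``only if'' direction, let $D$ be a compressing disk and $\gamma=\partial D$; since an essential simple closed curve on a torus is non-separating, compressing $T$ along $D$ produces a single $2$-sphere $S$. As $\manifold$ is irreducible, $S$ bounds a ball $B\subset\manifold$. Now $T$ is obtained from $S=\partial B$ by the reverse surgery: delete from $S$ the two parallel push-offs $D_1,D_2$ of $D$ that it contains and glue in the annular tube $A$ running alongside $D$, so $T=(S\setminus\operatorname{int}(D_1\cup D_2))\cup A$. Let $\widehat B=N(D)$ be the thickened compressing disk, a $3$-ball with $\partial\widehat B=D_1\cup D_2\cup A$, chosen in general position so that $\widehat B\cap S=D_1\cup D_2$ and $\operatorname{int}(\widehat B)\cap S=\emptyset$. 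Since $\operatorname{int}(\widehat B)$ is connected it lies on one side of $S$, giving two cases. If $\widehat B$ lies opposite $B$, then $\widehat B\cap B=D_1\sqcup D_2$ and $B\cup\widehat B$ is a ball with a $1$-handle attached along two disjoint boundary disks, hence (by orientability) a solid torus, with boundary $(S\setminus\operatorname{int}(D_1\cup D_2))\cup A=T$; so $T$ bounds a solid torus. If $\widehat B\subset B$, then $S\setminus\operatorname{int}(D_1\cup D_2)\subset\partial B$ and $A\subset B$, so $T\subset B$ lies in a ball.

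I expect the main obstacle to be the bookkeeping in the ``only if'' direction: arranging $\widehat B$ to meet $S=\partial B$ in exactly the two disks $D_1,D_2$ and nothing more (this is precisely where $\operatorname{int}(D)\cap T=\emptyset$ and a general-position choice of $N(D)$ enter), and then correctly recognizing $B\cup\widehat B$ as a solid torus in the first case. A secondary subtlety is that the ``lies in a ball'' half of the ``if'' direction is genuinely non-elementary --- it needs the Loop Theorem (or Alexander's theorem), not merely a homological argument --- since a priori a torus in a ball need not bound a solid torus inside that ball.
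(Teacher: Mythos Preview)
Your proof is correct. Note that the paper does not actually supply its own proof of this claim: it is stated as folklore with a citation to Hatcher's notes, in line with the appendix's stated policy of giving precise statements and references rather than reproducing proofs. Your ``only if'' argument is exactly the standard one found in Hatcher---compress to a sphere, invoke irreducibility to bound a ball, and case on which side of the sphere the thickened compressing disk lies---and your ``if'' argument (meridian disk for the solid-torus case, van Kampen plus the Loop Theorem for the ball case) is the natural complement, which Hatcher leaves implicit. The self-identified subtleties you flag (arranging $N(D)\cap S=D_1\cup D_2$, and the genuine need for the Loop Theorem in the ball case) are real but routine, and you handle them correctly.
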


\begin{corollary}
\label{cor:irreducible}
For any $k \in \mathbb{N}$, $\altmanifold(k)$ is irreducible and has incompressible boundary.
\end{corollary}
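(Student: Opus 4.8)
The plan is to treat the two assertions in sequence — first irreducibility, then, using it, incompressibility of $\partial\altmanifold(k)$ — invoking the facts recorded above as \Cref{claim:surface-cover} and \Cref{claim:compressible-torus}. For irreducibility, recall that $\altmanifold(k)=\torus^2_k\times\nsphere{1}$, where $\torus^2_k$ is a compact, connected, orientable surface with $k\ge 1$ boundary circles; in particular $\torus^2_k\ncong\nsphere{2}$. By \Cref{claim:surface-cover}, the interior of $\altmanifold(k)$ has universal cover $\mathbb{R}^3$, which is irreducible (every piecewise-linear $2$-sphere in $\mathbb{R}^3$ bounds a ball, by Alexander's theorem). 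Applying \Cref{prop:irreducible-cover} to the covering $\mathbb{R}^3\to\interior{\altmanifold(k)}$ then shows that $\altmanifold(k)$ is irreducible.

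For the boundary, write $\partial\altmanifold(k)=T_1\cup\dots\cup T_k$ with $T_i=c_i\times\nsphere{1}$, where $c_1,\dots,c_k$ are the boundary circles of $\torus^2_k$; I fix one $T_i$ and show it is incompressible. Push $T_i$ into the interior along a collar $T_i\times[0,1]$ (with $T_i\times\{0\}=T_i$) to obtain a torus $T_i'=T_i\times\{1/2\}$, which is isotopic to $T_i$ in $\altmanifold(k)$, so that it suffices to prove $T_i'$ incompressible. The torus $T_i'$ is separating: one side is the collar piece $T_i\times[0,1/2]\cong\torus^2\times[0,1]$, and the other is homeomorphic to $\altmanifold(k)$ (it is $\altmanifold(k)$ with an open collar neighbourhood of $T_i$ deleted). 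Hence $T_i'$ is a $2$-sided torus in the orientable, irreducible manifold $\altmanifold(k)$, and \Cref{claim:compressible-torus} reduces the task to ruling out that $T_i'$ lies in a ball or bounds a solid torus in $\altmanifold(k)$.

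Both are homological obstructions, using $H_1(\altmanifold(k))\cong H_1(\torus^2_k)\oplus\mathbb{Z}\cong\mathbb{Z}^{k+2}$ (K\"unneth). The $\nsphere{1}$-fibre $\{x\}\times\nsphere{1}\subset T_i'$ generates the $\mathbb{Z}$-summand, hence has infinite order in $H_1(\altmanifold(k))$; since every loop in a ball is null-homologous, $T_i'$ cannot lie in a ball. Likewise $T_i'$ cannot bound a solid torus: its two complementary regions have first Betti numbers $2$ and $k+2\ge 3$, whereas a solid torus has first Betti number $1$. Therefore $T_i'$ is incompressible, and since $i$ was arbitrary, $\partial\altmanifold(k)$ is incompressible. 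The proof is short once the two cited black boxes are granted; the only point needing a little care — and the closest thing to an obstacle — is the reduction from the boundary torus $T_i$ to the interior torus $T_i'$ so that \Cref{claim:compressible-torus} applies as stated, together with the (routine) identification of the two complementary regions.
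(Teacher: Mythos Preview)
Your proof is correct and follows essentially the same strategy as the paper: irreducibility via \Cref{prop:irreducible-cover} and \Cref{claim:surface-cover}, then incompressibility of the boundary tori via \Cref{claim:compressible-torus}. Where the paper simply asserts ``by construction of $\altmanifold(k)$, none of them bounds a solid torus or lies in a ball,'' you supply the details---pushing into the interior and using Betti-number counts to rule out the two alternatives---so your argument is more explicit but not genuinely different.
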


\begin{proof}
The irreducibility of $\altmanifold(k)$ is a consequence of \Cref{prop:irreducible-cover} and \Cref{claim:surface-cover}. Then, the incompressibility of $\partial\altmanifold(k)$ follows from \Cref{claim:compressible-torus} by noting that $\partial\altmanifold(k)$ is a union of tori and, by construction of $\altmanifold(k)$, none of them bounds a solid torus or lies in a ball in $\altmanifold(k)$.
\end{proof}

\begin{proposition}
\label{prop:irreducible-MG}
The manifold $\manifold_G$ constructed in \Cref{sec:const} is irreducible.
\end{proposition}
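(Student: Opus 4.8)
The plan is to prove that $\manifold_G$ is irreducible by a standard innermost-disk argument, using only the description of $\manifold_G$ from \Cref{sec:const} as a union of the building blocks $\altmanifold_v\cong\altmanifold(d_v)$ glued in pairs along a finite collection $\mathbf{T}$ of pairwise disjoint two-sided tori (one per arc of $G$), together with \Cref{cor:irreducible}, which tells us that each block $\altmanifold_v$ is irreducible and has incompressible boundary. I would deliberately \emph{not} invoke \Cref{claim:jsj} at this point: irreducibility of $\manifold_G$ is exactly what is needed in order to even speak of its JSJ decomposition, so the argument has to be internal to the construction and rely only on the properties of the individual blocks.

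Concretely, I would take an arbitrary embedded $2$-sphere $S\subset\manifold_G$, isotope it to meet $\mathbf{T}$ transversely, and induct on the number $k$ of circles in $S\cap\mathbf{T}$. For the base case $k=0$ the sphere $S$ lies in the interior of a single block $\altmanifold_v$ and bounds a ball there (hence in $\manifold_G$) by irreducibility of $\altmanifold_v$. For the inductive step with $k\ge 1$, every circle of $S\cap\mathbf{T}$ bounds a disk on $S$; picking one that is innermost on $S$ yields a properly embedded disk $D\subset\altmanifold_v$ with $\partial D$ on a boundary torus of $\altmanifold_v$, and incompressibility of $\partial\altmanifold_v$ forces $\partial D$ to be inessential on that torus, hence to bound a disk on $\mathbf{T}$. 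So the set of circles of $S\cap\mathbf{T}$ bounding a disk on $\mathbf{T}$ is non-empty; choosing such a circle $c$ whose bounding disk $E\subset\mathbf{T}$ is innermost on $\mathbf{T}$ (so $E^{\circ}\cap S=\emptyset$), compression of $S$ along $E$ produces two disjoint embedded $2$-spheres $\Sigma_1,\Sigma_2$, each meeting $\mathbf{T}$ in strictly fewer than $k$ circles. By the induction hypothesis $\Sigma_1$ and $\Sigma_2$ each bound a ball in $\manifold_G$, and since $S$ is recovered from $\Sigma_1\sqcup\Sigma_2$ by tubing along a short unknotted arc crossing $\mathbf{T}$ once, one reassembles these two balls into a ball bounded by $S$.

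The step I expect to be the main obstacle, or at least the fussiest, is precisely this last reassembly: deducing from ``$\Sigma_1$ and $\Sigma_2$ bound balls'' that ``$S$ bounds a ball'' requires a short case analysis on the relative position of the two balls (disjoint, nested, or complementary). The complementary case would force $\manifold_G\cong\nsphere{3}$, which does not occur because $\pi_1(\manifold_G)$ contains a rank-two free group, coming from the $\pi_1$-injective image of $\pi_1$ of any block $\altmanifold(d_v)$; in the disjoint and nested cases one glues the two balls, respectively drills an unknotted spanning arc out of a spherical shell $\nsphere{2}\times[0,1]$ (which again yields a ball), to obtain the ball bounded by $S$. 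None of this is deep, but it is the part that needs to be written out with care.

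As a backup I would keep in mind a more structural argument that bypasses the induction entirely: each block $\altmanifold(d_v)=\torus^2_{d_v}\times\nsphere{1}$ is aspherical, and each gluing torus is $\pi_1$-injective in the adjacent blocks (being two-sided and incompressible in an irreducible block, by \Cref{cor:irreducible} and the loop theorem), so $\manifold_G$ is a graph of aspherical spaces with $\pi_1$-injective edge spaces and is therefore itself aspherical; a closed orientable aspherical $3$-manifold other than $\nsphere{2}\times\nsphere{1}$ is irreducible by the Sphere Theorem, and $\manifold_G\not\cong\nsphere{2}\times\nsphere{1}$ since $\pi_1(\manifold_G)$ is not infinite cyclic. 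I would most likely present the innermost-disk argument as the actual proof, being elementary and self-contained, and relegate the asphericity argument to a remark.
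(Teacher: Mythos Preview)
Your approach is correct and uses the same core ingredients as the paper: irreducibility and incompressible boundary of the blocks from \Cref{cor:irreducible}, plus an innermost-disk argument on $S\cap\mathbf{T}$. The paper, however, packages the argument more economically. Rather than inducting on $|S\cap\mathbf{T}|$ and then reassembling balls, it argues by contradiction: assume $S$ is a reducing sphere, isotope it to make $|S\cap\mathbf{T}|$ \emph{minimal}, and observe that an innermost curve on $S$ either gives a compressing disk for some $T_j$ in a block (contradicting incompressibility) or bounds a disk on $T_j$ that can be used to isotope $S$ so as to reduce $|S\cap\mathbf{T}|$ (contradicting minimality). This sidesteps entirely the reassembly step you flagged as the fussiest part, along with the case analysis on the relative position of the two balls and the need to rule out $\manifold_G\cong\nsphere{3}$; your asphericity backup is likewise unnecessary.
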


\begin{proof}
Let $G=(V,E)$ be a multigraph on $n$ nodes with $m$ arcs, and $\mathbf{T} = \{T_1,\ldots,T_m\}$ be the collection of pairwise disjoint tori $T_i \subset \manifold_G$ that are the images of the boundary tori of the manifolds $\altmanifold_v \cong \altmanifold(d_v)$ after performing the torus-gluings in the construction of $\manifold_G$. 

Let us assume that $\manifold_G$ is \emph{not} irreducible, i.e., there is some reducing 2-sphere $S \subset \manifold_G$. By definition, $S$ does not bound a ball in $\manifold_G$. Note that $S$ cannot be entirely contained in any of the building blocks $\altmanifold_v \subset \manifold_G$, otherwise it would imply the reducibility of $\altmanifold_v \cong \altmanifold(d_v)$, contradicting \Cref{cor:irreducible}. It follows that $S \cap (\cup_{i=1}^m T_i) \neq \emptyset$. Thus, after an isotopy if necessary, we may assume that $S$ intersects $\cup_{i=1}^m T_i$ transversally, and the intersection $S \cap (\cup_{i=1}^m T_i)$ is a union of pairwise disjoint closed curves $\boldsymbol{\alpha}=\{\alpha_1,\ldots,\alpha_\ell\}$ with $\ell$ being minimal.

Now let $\alpha_j \in \boldsymbol{\alpha}$ be a curve that is \emph{innermost} in $S$, i.e., one of the two disks bounded by $\alpha_j$ in $S$, say $D \subset S$, does not contain any other curves from $\boldsymbol{\alpha}$. After relabeling if necessary, let $T_j \in \mathbf{T}$ denote the torus for which $\alpha_j = S \cap T_j$. Note that, if $\alpha_j$ also bounds a disk $D' \subset T_j$, then $D \cup D'$ is a sphere entirely contained in one of the building blocks $\altmanifold_w$ incident to $T_j$. Since $\altmanifold_w$ is irreducible (\Cref{cor:irreducible}), $D \cup D'$ bounds a ball $B \subset \altmanifold_w$, which can be used to eliminate $\alpha_j$ by first isotoping $D$ inside $B$ to coincide with $D'$ and then pushing $D'$ off the torus $T_j$, contradicting the minimality of $\boldsymbol{\alpha}$. Hence the disk $D$ is essential in $\altmanifold_w$. However, this contradicts the incompressibility of $T_j$ in $\altmanifold_w$ (\Cref{cor:irreducible}).
\end{proof}

\begin{proof}[Proof of {\Cref{claim:jsj}}]
We follow the notation introduced in the proof of \Cref{prop:irreducible-MG}. By \Cref{cor:seifert-uniqueness-blocks}, each building block $\altmanifold_v$ admits a unique Seifert fibering. In particular, each boundary component of $\altmanifold_v$ has a uniquely determined fibering. Therefore, when two building blocks $\altmanifold_{u}$ and $\altmanifold_{u'}$ are glued together along boundary tori $T \subset \partial\altmanifold_{u}$ and $T' \subset \partial\altmanifold_{u'}$, the resulting 3-manifold is not Seifert fibered, unless the gluing map aligns the (identical) Seifert fiberings of $T$ and $T'$. Now, it follows from the definition of distance \cite[Section 4.1]{bachman2017computing} that gluing maps of distance at least one do \emph{not} align these Seifert fiberings. (In fact, for any two fixed Seifert fiberings of the torus, there are precisely two maps that align them.) Hence, upon the manifold $\manifold_G$ is constructed, the Seifert fibrations of adjacent building blocks induce different fibrations on each torus in $\mathbf{T} = \{T_1,\ldots,T_m\}$.
Now, by construction of the building blocks, none of the tori in $\mathbf{T}$ bounds a solid torus (cf.\ \cite[p.\ 591]{lackenby2017conditionally}), therefore they are all incompressible (cf.\ \Cref{claim:compressible-torus}) and pairwise non-parallel. This implies that the building blocks $\altmanifold_v$ ($v \in V$) are indeed the JSJ pieces of $\manifold_G$ and the tori in $\mathbf{T}$ comprise its JSJ tori.
\end{proof}

\newpage

\section{Triangulating the building blocks}
\label{app:trg}

Here we describe the triangulation $\tri(k)$ of the building block $\altmanifold(k)$ introduced in \Cref{sec:const}. Topologically, $\altmanifold(k) = \torus^2_k \times \nsphere{1}$, where $\torus^2_k$ is obtained by removing the interiors of $k$ pairwise disjoint disks from the torus $\torus^2$. To construct $\tri(k)$, first we triangulate $\torus^2_k$ with $3k+2$ triangles as shown in \Cref{fig:torus+holes}. This triangulation of $\torus^2_k$ naturally lifts to a decomposition of $\torus^2_k \times [0,1]$ into $3k+2$ \emph{triangular prisms}, each of which can be triangulated with three tetrahedra, see \Cref{fig:trg_prism}. The construction of $\tri(k)$ is concluded by identifying the triangulations of $\torus^2_k \times \{0\}$ and $\torus^2_k \times \{1\}$ via the identity map. It immediately follows that $\tri(k)$ consists of $9k+6 = O(k)$ tetrahedra and induces a two-triangle triangulation at each boundary torus.

\begin{figure}[ht]
	\centering
	\includegraphics[scale=1]{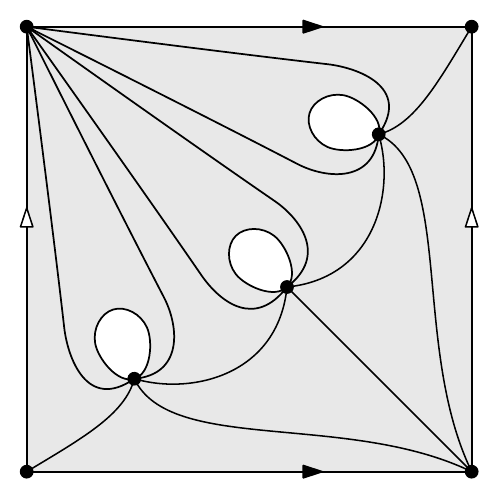}
	\caption{A minimal triangulation of $\torus^2_k$ with $3k+2$ triangles, drawn for $k =3$. The extension of the drawing for larger $k$ is straightforward.}
	\label{fig:torus+holes}
\end{figure}

\begin{figure}[ht]
	\centering
	\includegraphics[scale=0.909]{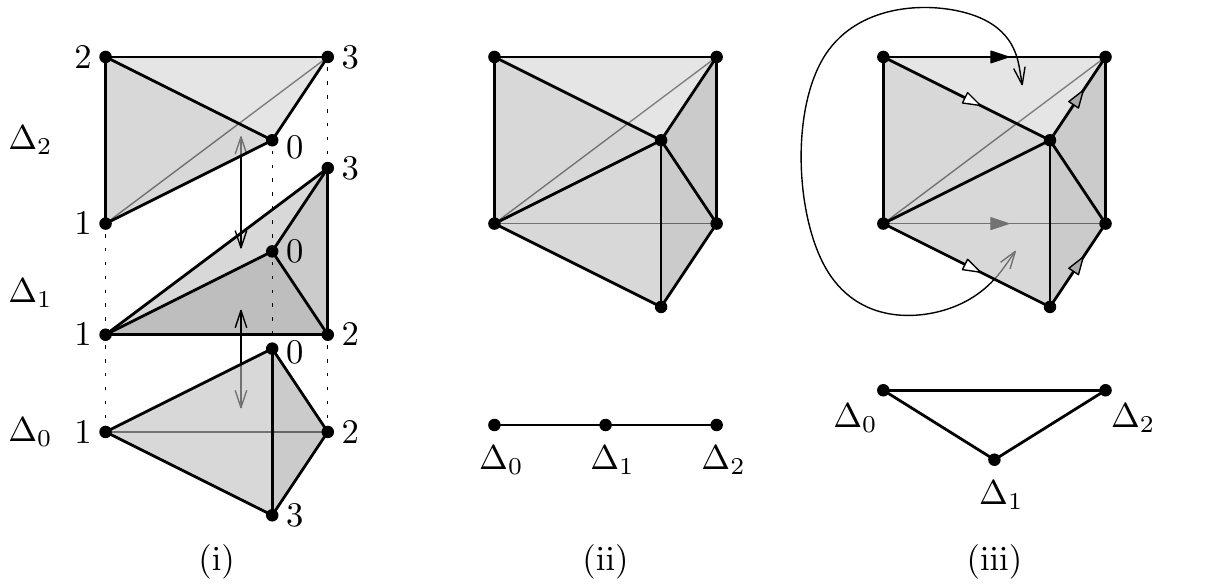}
	\caption{(i) The \emph{triangular prism} (the shape described as $t \times [0,1]$, where $t$ denotes a triangle) can be triangulated with three tetrahedra. (ii) A triangulation of the triangular prism and its corresponding dual graph. (iii) A triangular prism with its top and bottom triangles identified. Reproduced from \cite[Figure 13]{huszar2019manifold}.}
	\label{fig:trg_prism}
\end{figure}

\end{document}